\newtheorem{rem}{Remark}[section]
\newtheorem{thm}{Theorem}[section]
\newtheorem{cor}{Corollary}[section]
\newtheorem{lm}{Lemma}[section]
\newtheorem{defi}{Definition}[section]
\newtheorem{prop}{Proposition}[section]
\newcommand{\nabh}{\nabla_{\! h}}
\newcommand{\Dh}{\Delta_{\! h}}
\newcommand{\bu}{\mathbf{u}}
\newcommand{\bU}{\mathbf{U}}
\newcommand{\bv}{\mathbf{v}}
\newcommand{\B}[3]{\mathcal{B}\left( #1, #2, #3\right)}
\newcommand{\Tri}
\newcommand{\eu}{{\bf e}_{\textbf{u}}}
\newcommand{\teu}{\tilde{\bf e}_{\textbf{u}}^{n+\hf }}
\newcommand{\beu}{\bar{\bf e}_{\textbf{u}}^{n+\hf }}
\newcommand{\bhu}{\hat{\mbox{\textbf{u}}}}
\newcommand{\ehu}{{\hat{\textbf{e}}}_{\textbf{u}}}
\newcommand{\behu}{\bar{\hat{\textbf{e}} }_{\textbf{u}}^{n+\hf }}
\newcommand{\emu}{e_{\mu}^{n+\hf }}
\newcommand{\bep}{\bar{e}_{p}^{n+\hf }}
\newcommand{\ephi}{e_{\phi}}
\newcommand{\tephi}{\tilde{e}_{\phi}^{n+\hf }}
\newcommand{\bbephi}{\bar{\bar{e}}_{\phi}^{n+\hf }}
\newcommand{\muhalf}{\mu^{n+\hf }}
\newcommand{\Muhalf}{\mbox{M}^{n+\hf }}
\newcommand{\ext}[1]{\tilde{#1}^{n+\hf }}
\newcommand{\imp}[1]{\bar{#1}^{n+\hf }}
\newcommand{\impp}[1]{\bar{\bar{#1}}^{n+\hf }}
\newcommand{\mAh}{\mathcal{A}_h}
\newcommand{\hf}{\frac{1}{2}}
\newcommand{\nrm}[1]{\left\| #1 \right\|}
\newcommand{\nrminf}[1]{\left\| #1 \right\|_{\infty}}
\newcommand\dt {{\Delta t}}
\newcommand{\eipC}[2]{\left< #1 , #2 \right>_{c}}
\newcommand{\eipew}[2]{\left< #1 , #2 \right>_{ew}}
\newcommand{\eipns}[2]{\left< #1 , #2 \right>_{ns}}
\newcommand{\eipvec}[2]{\left< #1 , #2 \right>}
     \def\n{\mbox{\boldmath $n$}} 
     \def\u{\mbox{\boldmath $u$}}    
     \def\v{\mbox{\boldmath $v$}}
	\title{Convergence analysis of a second order numerical scheme for the Flory-Huggins-Cahn-Hilliard-Navier-Stokes system}
	\date{\today}
\begin{document}
	
\author{
Wenbin Chen\thanks{
Shanghai Key Laboratory for Contemporary Applied Mathematics, School of Mathematical Sciences; Fudan University, Shanghai, China 200433 ({\tt wbchen@fudan.edu.cn})} 
\and 
Jianyu Jing\thanks{School of Mathematical Sciences; Fudan University, Shanghai, China 200433 ({\tt jyjing20@fudan.edu.cn})} 
\and 
Qianqian Liu\thanks{School of Mathematical Sciences; Fudan University, Shanghai, China 200433 ({\tt qianqianliu21@m.fudan.edu.cn})} 
\and
Cheng Wang\thanks{Mathematics Department; University of Massachusetts; North Dartmouth, MA 02747 ({\tt corresponding author: cwang1@umassd.edu})}
\and
Xiaoming Wang\thanks{Department of Mathematics and Statistics, Missouri University of Science and Technology, Rolla, MO 65409, USA
	({\tt xiaomingwang@mst.edu})}
			}

 	\maketitle
	\numberwithin{equation}{section}
	
\begin{abstract}	 

We present an optimal rate convergence analysis for a second order accurate in time, fully discrete finite difference scheme for the Cahn-Hilliard-Navier-Stokes (CHNS) system, combined with logarithmic Flory-Huggins energy potential. The numerical scheme has been recently proposed, and the positivity-preserving property of the logarithmic arguments, as well as the total energy stability, have been theoretically justified. In this paper, we rigorously prove second order convergence of the proposed numerical scheme, in both time and space. Since the CHNS is a coupled system, the standard $\ell^\infty (0, T; \ell^2) \cap \ell^2 (0, T; H_h^2)$  error estimate could not be easily derived, due to the lack of regularity to control the numerical error associated with the coupled terms. Instead, the $\ell^\infty (0, T; H_h^1) \cap \ell^2 (0, T; H_h^3)$ error analysis for the phase variable and the $\ell^\infty (0, T; \ell^2)$ analysis for the velocity vector, which shares the same regularity as the energy estimate, is more suitable to pass through the nonlinear analysis for the error terms associated with the coupled physical process. Furthermore, the highly nonlinear and singular nature of the logarithmic error terms makes the convergence analysis even more challenging, since a uniform distance between the numerical solution and the singular limit values of is needed for the associated error estimate. Many highly non-standard estimates, such as a higher order asymptotic expansion of the numerical solution (up to the third order accuracy in time and fourth order in space), combined with a rough error estimate (to establish the maximum norm bound for the phase variable), as well as a refined error estimate, have to be carried out to conclude the desired convergence result. To our knowledge, it will be the first work to establish an optimal rate convergence estimate for the Cahn-Hilliard-Navier-Stokes system with a singular energy potential.

	\bigskip

\noindent
{\bf Key words and phrases}:
Cahn-Hilliard-Navier-Stokes system, Flory-Huggins energy potential, Crank-Nicolson approximation, optimal rate convergence analysis, higher order asymptotic expansion, rough and refined error estimates


\noindent
{\bf AMS subject classification}: \, 35K35, 35K55, 49J40, 65M06, 65M12	
\end{abstract}
	





\section{Introduction}

A bounded domain $\Omega \subset \mathbb{R}^d$ ($d=2$ or $d=3$) is considered. For simplicity, it is assumed that $\Omega= (0,1)^2$, 
and an extension to the three-dimensional (3-D) domain 
would be straightforward. 

In the phase field formulation, a point-wise bound, $-1 < \phi < 1$, is assumed for the phase variable $\phi$. For any $\phi \in H^1 (\Omega)$ with this bound, the Flory-Huggins free energy is given by   
	\begin{equation}
	\label{CH energy}
E(\phi)=\int_{\Omega}\left( ( 1+ \phi) \ln (1+\phi) + (1-\phi) \ln (1-\phi) - \frac{\theta_0}{2} \phi^2 +\frac{\epsilon^2}{2}|\nabla \phi|^2\right) d {\bf x} ,
	\end{equation}
in which $\epsilon>0$, $\theta_0>0$ are certain physical parameter constants associated with the diffuse interface width and inverse temperature, respectively; see the related references~\cite{cahn1996, elliott92a, doi13, elliott96b}, etc. 

In addition to the phase field evolution, the fluid motion has to be considered in the physical process. In particular, the dynamical equations of the Cahn-Hilliard-Navier-Stokes (CHNS) system~\cite{liu03} are formulated as   
\begin{align} 
  & 
   \bu_t + \bu \cdot \nabla \bu + \nabla p  - \nu \Delta \bu = - \gamma \phi \nabla \mu  , 
   \label{equation-CHNS-1} 
\\
 & 
 \phi_t + \nabla \cdot ( \phi \bu) = \Delta \mu ,  \label{equation-CHNS-2}   
\\
  & \mu := \delta_\phi E 
 = \ln ( 1 + \phi) - \ln ( 1- \phi) 
  - \theta_0 \phi - \epsilon^2  \Delta \phi  ,  \label{equation-CHNS-3}  
\\
  & \nabla \cdot \bu = 0 , \label{equation-CHNS-4}    
\end{align}  
with no-flux and no-penetration free-slip boundary conditions: 
	\begin{equation} 
\partial_{n} \phi=\partial_{n} \mu=0, \quad \bu \cdot \n = \partial_n (\bu\cdot \boldsymbol{\tau}) = 0  , \quad\quad\mbox{on} \ \partial \Omega \times(0, T]. \label{BC-1} 
	\end{equation} 
In this system, $\bu$ is the advective velocity, $p$ is the pressure variable, and $\nu>0$ is the kinematic viscosity. The constant $\gamma > 0$ is associated with surface tension, and term $-\gamma\phi\nabla\mu$ corresponds to a diffuse interface approximation of the singular surface force. 
Moreover, the following energy dissipation law could be carefully derived for this coupled physical system: 
\begin{equation} 
  E'_{total} (t) 
  =   - \int_\Omega |  \nabla \mu |^2 d {\bf x}  
  - \frac{\nu}{\gamma}  \int_\Omega |  \nabla \bu |^2 d {\bf x}   \le 0 ,   \quad 
  E_{total} = E (\phi) + \frac{1}{2 \gamma} \| \bu \|^2  .  
    \label{total energy-dissipation-1} 
\end{equation}   
See the related PDE analysis works of various phase-field-fluid coupled system~\cite{abels09b, liu03, lowengrub98}, etc. 
  

An efficient and energy stable numerical approximation to the Cahn-Hilliard-Navier-Stokes (CHNS) system has always been an attractive and challenging issue, due to the highly coupled nature between the phase field evolution and fluid motion. Many linear, decoupled and energy stable numerical schemes have been applied to the CHNS system~\cite{feng06, han17, kay07, kim03, shen10b, shen2014, shen2015, Yang2021, yang17d, Zhao2021b, Zhao2021a}, with a polynomial approximation to the energy potential in the phase field formulation. With such a polynomial approximation, the singularity issue of the energy functional has been avoided, and the linear and decoupled numerical solvers have demonstrated its advantages in terms of computational efficiency. However, an extension of this idea to  the Flory-Huggins-Cahn-Hilliard-Navier-Stokes (FHCHNS) system~\eqref{equation-CHNS-1}-\eqref{equation-CHNS-4} will face a serious difficulty, which comes from the singularity of the logarithmic term in the Flory-Huggins energy formulation. For example, the numerical solutions created by either the linear stabilization method~\cite{shen10b}, the invariant energy quadratization (IEQ) method~\cite{yang17d}, or scalar auxiliary variable (SAV) method~\cite{Yang2021}, may not preserve the positivity of the phase variable at the next time step, so that the energy could even not be defined after a single-step computation. Of course, many remedy efforts may be made, such as an extension of the energy functional definition even if the phase variable does not preserve the positivity. On the other hand, these efforts may introduce non-physical solutions in the long-time simulation, so that a theoretical justification of both the positivity-preserving property and energy stability for the FHCHNS system has always been highly desirable. To accomplish these theoretical properties, an implicit treatment for the nonlinear and singular terms turns out to be necessary. In fact, for various Cahn-Hilliard-Fluid physical systems, with a polynomial approximation in the energy potential expansion, such an implicit numerical approach, so called the convex splitting method, has been widely used, while both the energy stability and optimal rate convergence analysis have been extensively reported~\cite{chen19a, chen22c, chen16, diegel17, feng12, han15, liuY17}. Meanwhile, most existing numerical works for the Cahn-Hilliard-Fluid system have focused on the polynomial approximation version, and a theoretical numerical analysis of the FHCHNS system, with a logarithmic energy potential, has been very limited. A pioneering work~\cite{chen22b} proposes a first order accurate (in time) numerical scheme for the FHCHNS system~\eqref{equation-CHNS-1}-\eqref{equation-CHNS-4}, in which the convex splitting approach is employed to the chemical potential in the phase field part, and semi-implicit discretization is applied to the fluid convection and coupled terms in the physical system. Both the positivity-preserving and total energy stability properties have been proved, and this work provides a theoretical analysis for the FHCHNS system~\eqref{equation-CHNS-1}-\eqref{equation-CHNS-4}, for the first time in the existing literature. 

Moreover, a second order (in time) numerical approximation to the (FHCHNS) system~\eqref{equation-CHNS-1}-\eqref{equation-CHNS-4} turns out to be highly non-trivial, since a second order numerical design for the nonlinear logarithmic terms would be very challenging to preserve both the positivity-preserving property and the total energy stability. Such a second order accurate, finite difference scheme has been proposed in a recent article~\cite{chen24a}, with both of these theoretical properties rigorously established. In more details, a modified Crank-Nicolson approximation is applied to the singular logarithmic nonlinear term, while the expansive term is updated by an explicit second order Adams-Bashforth extrapolation, and an alternate temporal stencil is used for the surface diffusion term. Furthermore, a nonlinear artificial regularization term is added in the chemical potential approximation, and this term ensures the positivity-preserving property for the logarithmic arguments. The convective term in the phase field evolutionary equation is updated in a semi-implicit way, with second order accurate temporal approximation. The fluid momentum equation could also be computed by a semi-implicit algorithm. The resulting numerical system is proven to be uniquely solvable, positivity-preserving and unconditionally stable in terms of total energy. In fact, an iteration process is constructed to establish these theoretical properties. 

A few interesting numerical simulation results have been presented. On the other hand, the convergence analysis for the FHCHNS system~\eqref{equation-CHNS-1}-\eqref{equation-CHNS-4} remained an open problem, even for the first order accurate scheme. In this article, we provide an optimal rate convergence analysis for the fully discrete second order scheme formulated in~\cite{chen24a}, which is shown to be second order accurate in both time and space. Because of the highly coupled nature of the CHNS system, the standard $\ell^\infty (0, T; \ell^2) \cap \ell^2 (0, T; H_h^2)$ error estimate could not pass through, which comes from the lack of regularity to control the error inner products associated with the nonlinear coupled terms in the phase evolutionary equation and the momentum equation. To overcome this difficulty, the $\ell^\infty (0, T; H_h^1) \cap \ell^2 (0, T; H_h^3)$ error analysis, which shares the same regularity as the energy estimate, would be appropriate to pass through the associated nonlinear analysis; see the related reference works~\cite{chen19a, chen22c, chen16, diegel17, GuoY2024a, liuY17}, etc. Meanwhile, all these existing error estimate works for the phase field-fluid coupled system have been associated with the polynomial approximation in the free energy expansion. In comparison, for the CHNS system~\eqref{equation-CHNS-1}-\eqref{equation-CHNS-4} with Flory-Huggins energy potential, the highly nonlinear and singular nature of the logarithmic error terms makes the convergence analysis even more challenging. In particular, a uniform distance between the numerical solution and the singular limit values of $\pm 1$ is needed to pass through the associated error estimate. In turn, many highly non-standard techniques have to be involved in the theoretical analysis. First, a higher order asymptotic expansion, up to third order accuracy in time and fourth order accuracy in space, has to be performed with a careful linearization technique. Such a higher order asymptotic expansion enables one to obtain a rough error estimate, so that to the maximum norm bound for the phase variable could be derived. As a direct consequence, this maximum norm bound ensures a uniform distance between the numerical solution and the singular limit values, which will play a crucial role in the subsequent analysis. Finally, a refined error estimate is carried out to accomplish the desired convergence result. To our knowledge, this will be the first work to provide an optimal rate convergence estimate for the Cahn-Hilliard-Navier-Stokes system with singular energy potential. 

The rest of the article is organized as follows. In Section~\ref{sec:numerical scheme}, we review the fully discrete finite difference scheme and state the main theoretical result. The optimal rate convergence analysis and error estimate are presented in Section~\ref{sec:convergence}. Finally, some concluding remarks are made in Section~\ref{sec:conclusion}.


	\section{Numerical scheme}  \label{sec:numerical scheme} 	
	
		\subsection{The finite difference spatial discretization}
For simplicity, we only consider the two dimensional domain $\Omega=(0,1)^2$. The three dimensional case can be similarly extended. In this domain, we denote the uniform spatial grid size $h=\frac{1}{N}$, with $N$ a positive integer. To facilitate the theoretical analysis, the marker and cell (MAC) grid \cite{Harlow1965} is used: the phase variable $\phi$, the chemical potential $\mu$ and the pressure field $p$ are defined on the cell-centered mesh points $\left(\left(i+\hf \right)h,\,\left(j+\hf h\right)\right),\ 0\leq i,\ j \le N$; for the velocity field $\bu = (u^x,\,u^y)$, the $x$-component of the velocity will be defined at the east-west cell edge points $\left(ih,\,\left(j+\hf h\right)\right),\ 0\leq i\leq N+1,\ 0\leq j \le N$, 
while the $y$-component of the velocity is located at the north-south cell edge points $\left(\left(i+\hf \right)h,\,jh\right)$. 

For a function $f(x,y)$, the notation $f_{i+\hf ,\, j+\hf }$ represents the value of $f\left(\left(i+\hf \right)h,\, \left(j+\hf \right)h\right)$. Of course, $f_{i+\hf ,\, j}$, $f_{i,\, j+\hf }$ could be similarly introduced. In turn, the following difference operators are introduced:
\begin{align}
	&(D^c_xf)_{i,\, j+\hf } = \frac{f_{i+\hf ,\, j+\hf }-f_{i-\hf ,\, j+\hf }}{h},\qquad
	(D^c_yf)_{i+\hf ,\, j} = \frac{f_{i+\hf ,\, j+\hf }-f_{i+\hf ,\, j-\hf }}{h},	\label{center-diff-operator}\\
	&(D^{ew}_xf)_{i+\hf ,\, j+\hf } = \frac{f_{i+1,\, j+\hf }-f_{i,\, j+\hf }}{h},\qquad
	(D^{ew}_yf)_{i,\, j} = \frac{f_{i,\, j+\hf }-f_{i,\, j-\hf }}{h},	\label{ew-diff-operator}\\
	&(D^{ns}_xf)_{i,\, j} = \frac{f_{i+\hf ,\, j}-f_{i-\hf ,\, j}}{h},\qquad
	(D^{ns}_yf)_{i+\hf ,\, j+\hf } = \frac{f_{i+\hf ,\, j+1}-f_{i+\hf ,\, j}}{h}.	\label{ns-diff-operator}
\end{align}
The boundary formulas may vary with different boundary conditions. 
With homogeneous Neumann boundary condition, \eqref{center-diff-operator} becomes
\begin{equation} \label{boundary-formula-1} 
	(D^c_xf)_{0,\, j+\hf } = (D^c_xf)_{N,\, j+\hf } = (D^c_yf)_{i+\hf ,\, 0} = (D^c_yf)_{i+\hf ,\, N} = 0.
\end{equation} 
The associated formulas for \eqref{ew-diff-operator}-\eqref{ns-diff-operator} could be analogously derived. 

In turn, with a careful evaluation of boundary differentiation formula~\eqref{boundary-formula-1}, the discrete boundary condition associated with cell-centered function is given by the following definition, in which the ``ghost" points are involved. The boundary formulas for the edge-centered function could be similarly derived.

	\begin{defi} \label{defi: BC} 
A cell-centered function $\phi$ is said to satisfy homogeneous Neumann boundary condition, and we write $\boldsymbol{n} \cdot \nabla_{h} \phi=0$, iff $\phi$ satisfies
	\begin{equation*} 
 \phi_{-\frac12,j+\frac12}=\phi_{\frac12,j+\frac12}, \, \, \, \phi_{N+\frac12,j+\frac12}=\phi_{N-\frac12,j+\frac12} , \, \,  
\phi_{i+\frac12,-\frac12}=\phi_{i+\frac12,\frac12} , \, \, \,  \phi_{i+\frac12,N+\frac12}=\phi_{i+\frac12,N-\frac12} .  
	\end{equation*} 
A discrete function $\boldsymbol{f}=(f^{x}, f^{y})^{T}$, with two components evaluated at east-west and north-south mesh points, is said to satisfy no-penetration boundary condition, $\boldsymbol{n}\cdot\boldsymbol{f}=0$, iff we have
	\begin{equation*} 
 f_{0, j+\frac12}^x= f_{N, j+\frac12}^x=0, \, \, \,  
	\\
 f_{i+\frac12, 0}^{y} = f_{i+\frac12, N}^{y}=0 ,  
	\end{equation*} 
and it is said to satisfy free-slip boundary condition iff we have
	\begin{equation*} 
 f_{i, -\frac12}^{x}=f_{i, \frac12}^{x},  \, \, \, 
 f_{i, N+\frac12}^{x}=f_{i, N-\frac12}^{x}, \, \, \, 
 f_{-\frac12, j}^{y}=f_{\frac12, j}^{y} , \, \, \,  
 f_{N+\frac12, j}^{y}=f_{N-\frac12, j}^{y}. 
	\end{equation*} 
\end{defi}

In addition, the long stencil difference operator is also defined on the east-west cell edge points and north-south cell edge points:
\begin{align}\label{long-stencil-diff-operator}
	(\tilde{D}_xf)_{i,\, j+\hf } = \frac{f_{i+1,\, j+\hf }-f_{i-1,\, j+\hf }}{2h},\qquad
	(\tilde{D}_yf)_{i+\hf ,\, j} = \frac{f_{i+\hf ,\, j+1}-f_{i+\hf ,\, j-1}}{2h}.
\end{align}
With 
homogeneous Dirichlet boundary condition, \eqref{long-stencil-diff-operator} could be written as 
\begin{align}
	&(\tilde{D}_xf)_{0,\, j+\hf } = \frac{f_{1,\, j+\hf }-f_{-1,\, j+\hf }}{2h} = \frac{f_{1,\, j+\hf }}{h},\\
	&(\tilde{D}_xf)_{N,\, j+\hf } = \frac{f_{N+1,\, j+\hf }-f_{N-1,\, j+\hf }}{2h} = -\frac{f_{N-1,\, j+\hf }}{h},\\
	&(\tilde{D}_yf)_{i+\hf ,\, 0} = \frac{f_{i+\hf ,\, 1}-f_{i+\hf ,\, -1}}{2h} = \frac{f_{i+\hf ,\, 1}}{h},\\
	&(\tilde{D}_yf)_{i+\hf ,\, N} = \frac{f_{i+\hf ,\, N+1}-f_{i+\hf ,\, N-1}}{2h} = -\frac{f_{i+\hf ,\, N-1}}{h}.
\end{align}
For a grid function $f$, the discrete gradient operator is defined as 
\begin{align}
	&\nabh f = \left( (D^{\ell}_xf ),\  (D^{\ell}_yf )\right)^T,
\end{align}
where $\ell = c,\ ew,\ ns$ may depend on the choice of $f$. The discrete divergence operator of a vector gird function $\bu$, defined on the cell-centered points, turns out to be  
\begin{equation}
	\left(\nabh\cdot \bu\right)_{i+\hf ,\, j+\hf } = 
		(D^{ew}_xu^x)_{i+\hf ,\, j+\hf } + (D^{ns}_yu^y)_{i+\hf ,\, j+\hf }.
\end{equation}
The five point standard Laplacian operator is straightforward: 
\begin{equation}
	(\Dh f)_{r,\, s} = \frac{f_{r+1,\,s}+f_{r-1,\,s}+f_{r,\,s+1}+f_{r,\,s-1}-4f_{r,\,s}}{h^2},
\end{equation}
where $(r,\,s)$ may refer to $(i+\hf ,\,j+\hf )$, $(i+\hf ,\,j)$ and $(i,\,j+\hf )$.

For $\bu = (u^x,\,u^y)^T$, $\bv = (v^x,\,v^y)^T$, located at the staggered mesh points respectively, and the cell centered variables $\phi$, $\mu$, the nonlinear terms are evaluated as follows~\cite{chen24a}: 
\begin{eqnarray} 
	&& 
	\bu \cdot \nabla_h \bv = \left( \begin{array}{c} 
		u^x_{i,\,j+\hf} \tilde{D}_x v^x_{i,\,j+\hf} + {\cal A}_{xy} u^y_{i,\,j+\hf} \tilde{D}_y v^x_{i,\,j+\hf} \\ 
		{\cal A}_{xy} u^x_{i+\hf,\,j} \tilde{D}_x v^y_{i+\hf,\,j} + u^y_{i,\,j+\hf} \tilde{D}_y v^y_{i+\hf,\,j}  
	\end{array} \right)  ,  \label{FD-u-1} 
	\\
	&& 
	\nabla_h \cdot (\bv \bu^T) =  \left( \begin{array}{c} 
		\tilde{D}_x (u^x v^x)_{i,\,j+\hf} + \tilde{D}_y ( {\cal A}_{xy} u^y v^x)_{i,\,j+\hf}  \\ 
		\tilde{D}_x ({\cal A}_{xy} u^x v^y)_{i+\hf,\,j} + \tilde{D}_y ( u^y v^y)_{i+\hf,\,j}  
	\end{array} \right)  ,   \label{FD-u-2} 
	\\
	&& 
	{\cal A}_h \phi \nabla_h \mu   =  \left( \begin{array}{c} 
		( D^c_x \mu \cdot {\cal A}_x \phi )_{i,\,j+\hf}   \\ 
		( D^c_y \mu \cdot {\cal A}_y \phi )_{i+\hf,\,j} 
	\end{array} \right)  ,   \label{FD-u-3} 
	\\
	&& 
	\nabla_h \cdot ({\cal A}_h \phi \bu) = 
	D^{ew}_x (u^x {\cal A}_x \phi)_{i+\hf,\,j+\hf} + D^{ns}_y (u^y {\cal A}_y \phi)_{i+\hf,\,j+\hf}  ,   
	\label{FD-phi-1} 
\end{eqnarray} 
where the averaging operators are given by 
\begin{eqnarray} 
	&& 
	{\cal A}_{xy} u^x_{i+\hf,\,j} = \frac14 \left(  u^x_{i,\,j-\hf} + u^x_{i,\,j+\hf} 
	+ u^x_{i+1,\,j-\hf} + u^x_{i+1,\,j+\hf}\right)   ,   \label{FD-ave-1}    
	\\
	&&    
	{\cal A}_x \phi_{i,\,j+\hf} = \frac12 \left(  \phi_{i-\hf,\,j+\hf} + \phi_{i+\hf,\,j+\hf} \right) .  \label{FD-ave-3}         
\end{eqnarray}     
A few other average terms, such as ${\cal A}_{xy} u^y_{i,\,j+\hf}$, ${\cal A}_y \phi_{i+\hf,\,j}$,  could be similarly defined.

In addition, the discrete inner product needs to be defined to facilitate the theoretical analysis. Let $f$, $g$ be two grid functions evaluated on the cell-center points, the discrete $\ell^2$ inner product is given by
\begin{equation}\label{center-l2-product}
	\eipC{f}{g} = h^2\sum_{i=1}^{N}\sum_{j=1}^{N}f_{i+\hf,\,j+\hf}g_{i+\hf,j+\hf}.
\end{equation}
If $f$, $g$ are evaluated on the east-west points, \eqref{center-l2-product} becomes:
\begin{equation}\label{eastwest-l2-product}
	\eipew{f}{g} = h^2\sum_{i=1}^{N}\sum_{j=1}^{N}f_{i,\,j+\hf}g_{i,\,j+\hf}.
\end{equation}
If $f$, $g$ are evaluated on the north-south points, \eqref{center-l2-product} shifts into:
\begin{equation}\label{northsouth-l2-product}
	\eipns{f}{g} = h^2\sum_{i=1}^{N}\sum_{j=1}^{N}f_{i+\hf,\,j}g_{i+\hf,\,j}.
\end{equation}
Similarly, for two vector grid functions $\bu=\left(u^x,\,u^y\right)^T$, $\bv=\left(v^x,\,v^y\right)^T$ whose components are evaluated on east-west and north-south respectively, the vector inner product is defined as 
\begin{equation}
	\eipvec{\bu}{\bv}_1 = \eipew{u^x}{v^x} + \eipns{u^y}{v^y}.
\end{equation}
Consequently, the discrete $\ell^2$ norms, $\| \cdot \|_2$ can be naturally introduced. Furthermore, the discrete $\ell^p$, $1\leq p\leq\infty$ norms are needed in the nonlinear analysis. For $(r,\,s)=(i+\hf ,\,j+\hf )$, $(i+\hf ,\,j)$ or $(i,\,j+\hf )$, we introduce
\begin{equation}
	\nrminf{f}:=\max\limits_{r,\,s}\left|f_{r,\,s}\right|,\qquad
	\nrm{f}_p:= \Big( h^2\sum_{r=0}^{N}\sum_{s=0}^{N}\left|f_{r,\,s}\right|^p \Big)^{\frac{1}{p}},\quad1\leq p<\infty.
\end{equation} 

The discrete average is defined as $\overline{f} := \langle f , 1 \rangle_c$, for any cell centered function $f$. Moreover, an $\langle \cdot , \cdot \rangle_{-1,h}$ inner product and $\| \cdot \|_{-1, h}$ norm need to be introduced to facilitate the analysis in later sections. For any $\varphi  \in \mathring{\mathcal C}_{\Omega} := \left\{ f \middle| \ \langle f , 1 \rangle_c = 0 \right\}$, we define 
	\begin{equation} 
\langle \varphi_1, \varphi_2  \rangle_{-1,h} = \langle \varphi_1 ,  (-\Delta_h )^{-1} \varphi_2 \rangle_c , \quad \| \varphi  \|_{-1, h } = \sqrt{ \langle \varphi ,  ( - \Delta_h )^{-1} (\varphi) \rangle_c } ,
	\end{equation} 
where the operator $\Delta_h$ is paired with discrete homogeneous Neumann boundary condition.

The following summation by parts formula has been derived in~\cite{chen24a}. We recall these formulas, which will be useful in the convergence analysis. 

\begin{lm} \cite{chen24a} 
	For two discrete grid vector functions $\bu=\left(u^x,\,u^y\right)$, $\bv=(v^x,\,v^y)$, where $u^x$, $u^y$ and $v^x$, $v^y$ are defined on east-west and north-south respectively, and two cell centered functions $f$, $g$, the following identities are valid, if $\bu$, $\bv$, $f$, $g$ are equipped with periodic boundary condition, or $\bu$, $\bv$ are implemented with homogeneous Dirichlet boundary condition and homogeneous Neumann boundary condition is imposed for $f$ and $g$:
	\begin{align}
		&\eipvec{\bv}{\bu\cdot\nabh\bv}_1 +\eipvec{\bv}{\nabh\cdot (\bv\bu^T )}_1 =0, 
		\label{summation-1} \\
		&\eipvec{\bu}{\nabh f} _1= 0,\quad \mbox{if} \quad  \nabh\cdot \bu=0,  
		 \label{summation-2} \\
		-&\eipvec{\bv}{\Dh\bv}_1 =\nrm{\nabh\bv}_2^2, \label{summation-3} \\
		-&\eipC{f}{\Dh f} =\nrm{\nabh f}_2^2,  \label{summation-4} \\
		-&\eipC{g}{\nabh\cdot\left({\cal A}_hf\bu\right)}
		 =\eipvec{\bu}{{\cal A}_hf\nabh g}_1 .  \label{summation-5} 
	\end{align}
\end{lm}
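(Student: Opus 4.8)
The plan is to treat all five identities \eqref{summation-1}--\eqref{summation-5} as discrete analogues of integration by parts, driven by a single mechanism: expand each discrete inner product as a finite sum over the interior grid points, shift the summation index so as to transfer a difference operator from one factor onto the other, and then check that the resulting boundary-layer terms vanish under the prescribed boundary conditions. For the two Laplacian identities it is convenient to first record the factorizations $\Dh f = D^{ew}_x(D^c_x f) + D^{ns}_y(D^c_y f)$ on the cell centers, together with the edge-centered counterparts for the velocity components, since these expose the grad--div adjoint structure explicitly.

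First I would dispatch the four ``linear'' identities \eqref{summation-2}--\eqref{summation-5}, which are all instances of the adjointness between $\nabh$ and $-\nabh\DOT$ on the staggered mesh. For \eqref{summation-4}, inserting the factorization and summing by parts in each coordinate gives $-\eipC{f}{\Dh f} = \eipew{D^c_x f}{D^c_x f} + \eipns{D^c_y f}{D^c_y f} = \nrm{\nabh f}_2^2$, where the Neumann condition \eqref{boundary-formula-1} (equivalently the ghost-point rule of Definition~\ref{defi: BC}) forces the edge differences to vanish on $\partial\Omega$, killing every boundary term; identity \eqref{summation-3} is the component-wise vector version, using the Dirichlet/free-slip data for $u^x,u^y$. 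For \eqref{summation-2} the same index shift moves $\nabh$ off $f$, producing $\eipvec{\bu}{\nabh f}_1 = -\eipC{f}{\nabh\DOT\bu}$ up to boundary terms; the no-penetration condition eliminates the boundary contributions and the hypothesis $\nabh\DOT\bu=0$ kills the bulk term. Finally \eqref{summation-5} is divergence-by-parts of the type \eqref{summation-2} applied to the vector field ${\cal A}_h f\,\bu$ whose components are those in \eqref{FD-phi-1}, after the pointwise regrouping $\eipvec{{\cal A}_h f\,\bu}{\nabh g}_1 = \eipvec{\bu}{{\cal A}_h f\,\nabh g}_1$, a mere reordering of the scalar products $u^x\,{\cal A}_x f\,D^c_x g$ and $u^y\,{\cal A}_y f\,D^c_y g$.

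The genuine work is identity \eqref{summation-1}, the discrete skew-symmetry of the convection operator, and this is where I expect the main obstacle. The key structural observation is that, unlike \eqref{summation-2}, no divergence-free hypothesis is needed here: the cancellation is purely the skew-adjointness (up to boundary) of the centered long-stencil operators $\tilde{D}_x,\tilde{D}_y$ from \eqref{long-stencil-diff-operator}. Pairing the $x$-components of \eqref{FD-u-1} and \eqref{FD-u-2} against $v^x$ and grouping by direction, each group has the form $\eipew{v^x}{w\,\tilde{D}v^x} + \eipew{v^x}{\tilde{D}(w\,v^x)}$ with $w = u^x$ or $w = {\cal A}_{xy}u^y$; a single long-stencil summation by parts rewrites the second summand as $-\eipew{w\,v^x}{\tilde{D}v^x}$ plus boundary terms, which exactly cancels the first. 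The $y$-components paired against $v^y$ behave identically, so the whole left-hand side collapses to boundary contributions alone, and these vanish under periodicity or under the no-penetration/free-slip data together with the ghost-point formulas for $\tilde{D}$. The delicate points are purely bookkeeping: one must track the staggered locations so that the averaged factor ${\cal A}_{xy}$ occupies exactly matching mesh points in the two convection forms \eqref{FD-u-1}--\eqref{FD-u-2}, and one must confirm that the two-layer boundary stencil of $\tilde{D}$ leaves no residual. Since the lemma is quoted from \cite{chen24a}, the cleanest write-up verifies the single-direction cancellation in detail and invokes symmetry for the remaining three directions.
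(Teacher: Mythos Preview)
The paper does not actually prove this lemma; it is quoted verbatim from \cite{chen24a} and no argument is supplied here. So there is no ``paper's own proof'' to compare against, and your proposal stands on its own.

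Your outline is correct and follows the standard route one would expect in \cite{chen24a}: identities \eqref{summation-2}--\eqref{summation-5} are indeed routine grad--div adjointness on the MAC grid, with the boundary terms killed exactly as you describe, and your treatment of \eqref{summation-1} via the skew-adjointness of the long-stencil centered operators $\tilde{D}_x,\tilde{D}_y$ is the right mechanism. The one place to be slightly careful in a full write-up is the boundary bookkeeping for the cross-terms in \eqref{summation-1}: the averaged factor ${\cal A}_{xy}u^y$ lives at east-west points and ${\cal A}_{xy}u^x$ at north-south points, so when you sum by parts in the tangential direction you need the free-slip ghost values (not just no-penetration) to make the two-layer $\tilde{D}$ boundary residuals vanish; you allude to this, but it is the only step where a careless treatment could leave a nonzero term.
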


The following Poincar\'{e}-type inequality will be useful in the later analysis. 
	\begin{prop} \label{prop: Poincare} 
(1) There are constants $C_0, \, C_1  >0$, independent of $h>0$, such that $\nrm{\phi}_2 \le C_0 \nrm{\nabla_h\phi}_2$, $\| \phi \|_{-1, h} \le C_1 \| \phi \|_2$, for all $\phi\in \mathring{\mathcal C}_{\Omega} := \left\{ f \middle| \ \langle f , 1 \rangle_c = 0 \right\}$. 

(2) For a velocity vector $\v$, with a discrete no-penetration boundary condition $\v \cdot \n =0$ on $\partial \Omega$, a similar Poincar\'e inequality is also valid: $\nrm{\v}_2 \le C_0 \nrm{\nabla_h\v}_2$, with $C_0$ only dependent on $\Omega$. 
	\end{prop}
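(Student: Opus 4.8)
The plan is to reduce all three estimates to the spectral properties of the discrete Laplacian and to the summation-by-parts identity \eqref{summation-4}. The structural fact driving everything is that, on the cell-centered space, $-\Dh$ equipped with homogeneous Neumann boundary condition is self-adjoint and positive semi-definite with respect to $\eipC{\cdot}{\cdot}$, its kernel consisting exactly of the constants; restricted to the mean-zero subspace $\mathring{\mathcal C}_\Omega$ it is positive definite. An explicit diagonalization is available: the eigenfunctions are (tensor products of) the discrete cosine modes $\cos(k\pi(i+\hf)h)\cos(l\pi(j+\hf)h)$, with eigenvalues $\lambda_{k,l} = \frac{4}{h^2}\left(\sin^2\frac{k\pi h}{2} + \sin^2\frac{l\pi h}{2}\right)$. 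The smallest positive eigenvalue is $\lambda_{1,0} = \frac{4}{h^2}\sin^2\frac{\pi h}{2}$, and the elementary bound $\sin x \ge \frac{2}{\pi}x$ on $[0,\frac{\pi}{2}]$ gives $\lambda_{1,0} \ge 4$ uniformly in $h$ (indeed $\lambda_{1,0}\to\pi^2$ as $h\to0$), which is precisely the $h$-independent spectral gap needed.

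For the first inequality of part (1), I would expand $\phi\in\mathring{\mathcal C}_\Omega$ in this eigenbasis, discarding the (zero) constant mode. Combining the Parseval identity with \eqref{summation-4}, $\nrm{\nabh\phi}_2^2 = \eipC{\phi}{-\Dh\phi} = \sum_{(k,l)\neq(0,0)}\lambda_{k,l}|\hat\phi_{k,l}|^2 \ge \lambda_{1,0}\sum_{(k,l)\neq(0,0)}|\hat\phi_{k,l}|^2 = \lambda_{1,0}\nrm{\phi}_2^2$, whence $\nrm{\phi}_2 \le \lambda_{1,0}^{-1/2}\nrm{\nabh\phi}_2$ and one may take $C_0=\frac12$. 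The second inequality then follows by a duality/bootstrap argument: setting $\psi = (-\Dh)^{-1}\phi\in\mathring{\mathcal C}_\Omega$, the definition of $\nrmh{\cdot}$ together with \eqref{summation-4} gives $\nrmh{\phi}^2 = \eipC{\phi}{\psi} = \eipC{-\Dh\psi}{\psi} = \nrm{\nabh\psi}_2^2$; on the other hand $\eipC{\phi}{\psi} \le \nrm{\phi}_2\nrm{\psi}_2 \le C_0\nrm{\phi}_2\nrm{\nabh\psi}_2 = C_0\nrm{\phi}_2\nrmh{\phi}$, where the first Poincaré bound was applied to $\psi$. Dividing by $\nrmh{\phi}$ yields $\nrmh{\phi}\le C_0\nrm{\phi}_2$, i.e. $C_1=C_0$.

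For part (2), the no-penetration condition forces $v^x$ to vanish on the east--west boundary ($i=0,N$) and $v^y$ to vanish on the north--south boundary ($j=0,N$), a Dirichlet condition in the normal coordinate of each component. I would prove the bound directly by a one-dimensional telescoping argument rather than spectrally, since the tangential condition is free-slip. For fixed $j+\hf$, summing from the boundary gives $v^x_{i,j+\hf} = \sum_{m=0}^{i-1} h\,(D^{ew}_x v^x)_{m+\hf,j+\hf}$, and Cauchy--Schwarz with $ih \le Nh = 1$ yields $|v^x_{i,j+\hf}|^2 \le \sum_{m=0}^{N-1} h\,|(D^{ew}_x v^x)_{m+\hf,j+\hf}|^2$; multiplying by $h^2$, summing over $(i,j)$, and again using $Nh=1$ gives $\nrm{v^x}_2 \le \nrm{D^{ew}_x v^x}_2 \le \nrm{\nabh v^x}_2$. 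The symmetric argument in the $y$-direction controls $v^y$, and adding the two contributions produces $\nrm{\v}_2\le C_0\nrm{\nabh\v}_2$ with $C_0$ depending only on $|\Omega|=1$.

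The main obstacle, and the reason the statement insists on $h$-independent constants, is exactly the \emph{uniformity} of the constant as $h\to0$; this is secured by the lower bound $\lambda_{1,0}\ge 4$ in the scalar case and by the length-one telescoping estimate in the vector case. A secondary subtlety in part (2) is that each scalar component is controlled by a Poincaré inequality applied only in its normal direction, so one must verify that the partial differences $D^{ew}_x v^x$ and $D^{ns}_y v^y$ arising this way are genuinely dominated by the full discrete gradient $\nabh\v$ appearing in \eqref{summation-3}; this holds because the latter collects every first-difference component of both $v^x$ and $v^y$, so each directional difference is one of its summands.
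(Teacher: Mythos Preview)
The paper states Proposition~\ref{prop: Poincare} as a preliminary fact without supplying a proof; it is invoked repeatedly throughout Section~\ref{sec:convergence} as a standard discrete Poincar\'e inequality. So there is no paper proof to compare against, and the relevant question is simply whether your argument is sound.

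It is. For part~(1) the spectral approach via the DCT-II eigenbasis of the Neumann discrete Laplacian is the cleanest route; your lower bound $\lambda_{1,0}=\tfrac{4}{h^2}\sin^2\tfrac{\pi h}{2}\ge 4$ via $\sin x\ge \tfrac{2}{\pi}x$ on $[0,\tfrac{\pi}{2}]$ gives the $h$-independent gap, and the duality argument with $\psi=(-\Dh)^{-1}\phi$ correctly reduces the $\|\cdot\|_{-1,h}$ bound to the first Poincar\'e inequality applied to $\psi$, yielding $C_1=C_0$. For part~(2) the telescoping-plus-Cauchy--Schwarz argument from the no-penetration boundary is the standard elementary proof; your final remark that $\|D_x^{ew}v^x\|_2^2$ and $\|D_y^{ns}v^y\|_2^2$ are each dominated by $\|\nabh\bv\|_2^2$ is justified because the identity \eqref{summation-3} decomposes $\|\nabh\bv\|_2^2=-\eipvec{\bv}{\Dh\bv}_1$ componentwise into nonnegative directional-difference sums, of which these are two. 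The only cosmetic point is that the indexing of the ew-sum in~\eqref{eastwest-l2-product} runs over $i=1,\dots,N$, so when you ``multiply by $h^2$ and sum over $(i,j)$'' you pick up a factor of at most $Nh=1$, exactly as you use.
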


		\subsection{The second order numerical scheme and the main theoretical results}

The second order in time accurate scheme has been proposed in~\cite{chen24a}. Given $\bu^n = \left((u^x)^n,\,(u^y)^n\right)$, $\bu^{n-1}=\left((u^x)^{n-1},\,(u^y)^{n-1}\right)$ evaluated at the MAC staggered grid, and $p^n$, $\phi^n$, $\phi^{n-1}$ located at the cell-centered grid, with $\nrminf{\phi^n}$, $\nrminf{\phi^{n-1}}<1$, we aim to find $\bhu^{n+1}$, $\bu^{n+1}$, $p^{n+1}$, $\phi^{n+1}$ that satisfy
\begin{align}
	&\frac{\bhu^{n+1}-\bu^n}{\tau}+\hf \left(\ext{\bu}\cdot\nabh\imp{\bhu}+\nabh\cdot\left(\imp{\bhu}(\ext{\bu})^{T}\right)\right)+\nabh p^n-\nu\Dh\imp{\bhu} \notag\\
	&\qquad =- \gamma \mAh\ext{\phi}\nabh\muhalf, \label{CN-NS}\\
	&\frac{\phi^{n+1}-\phi^n}{\tau}+\nabh\cdot\left(\mAh\ext{\phi}\imp{\bhu}\right)=\Dh\muhalf,	\label{CN-CH}\\
	&\muhalf=\frac{G(1+\phi^{n+1})-G(1+\phi^{n})}{\phi^{n+1}-\phi^{n}}+\frac{G(1-\phi^{n+1})-G(1-\phi^{n})}{\phi^{n+1}-\phi^{n}}-\theta_0\ext{\phi}-\epsilon^2\Dh\impp{\phi}\notag\\
	&\qquad\qquad+\tau\left( \mathcal{N}(\phi^{n+1}) - \mathcal{N}(\phi^n)  \right),	\label{CN-mu}\\
	&\frac{\bu^{n+1}-\bhu^{n+1}}{\tau}+ \frac12 \nabh ( 
	p^{n+1} -p^n )=0,	\label{CN-pressure-correction}\\
	&\nabh\cdot\bu^{n+1}=0,	\label{CN-incompressible}
\end{align}
where
\begin{equation}
	\begin{aligned}
		&\ext{\phi}:=\frac{3}{2}\phi^n-\hf \phi^{n-1},\quad\imp{\phi}:=\hf \phi^{n+1}+\hf \phi^{n},\quad\impp{\phi}:=\frac{3}{4}\phi^{n+1}+\frac{1}{4}\phi^{n-1},\\
		&\ext{\bu}:=\frac{3}{2}\bu^n-\hf \bu^{n-1},\quad\imp{\bhu}:=\hf \bhu^{n+1}+\hf \bu^{n},\quad G(x)=x\ln(x),
	\end{aligned}
\end{equation}
with the discrete boundary conditions:
\begin{equation}\label{physical-boundary-condition}
	( \bu^{n+1} \cdot \n ) |_{\Gamma}=0,\quad \partial_n ( \bu^{n+1} \cdot {\bf \tau} ) = 0 , \quad \partial_n\phi^{n+1}|_{\Gamma}=\partial_n\mu^{n+\hf }|_{\Gamma}=0.
\end{equation}
The nonlinear term $ \mathcal{N}(\phi^n)$ in \eqref{CN-mu} represents an increasing function within $\phi^n\in(-1,1)$, and possesses singularity at $\pm1$, for instance, we can choose $\mathcal{N}(\phi^n)=\ln(1+\phi^n)-\ln(1-\phi^n)$. 

At the initial time step, we could take a backward evaluation of the PDE system to obtain a locally second order accurate approximation to $\phi^{-1}$. In turn, a numerical implementation of the proposed algorithm \eqref{CN-NS}-\eqref{CN-incompressible} leads to a second order local truncation error at $n=0$.  


It is clear that the phase variable satisfies the mass conservation property, i.e.,
\begin{equation}
	\overline{\phi^{n+1}}=\overline{\phi^n}=\cdots=\overline{\phi^0}. \label{mass conserv-2} 
\end{equation}
To facilitate the later analysis, the following smooth function is introduced: for any $a>0$,
\begin{align}
	F_a(x):=\frac{G(x)-G(a)}{x-a},\ \forall x>0 . 
\end{align}
Therefore, the chemical potential approximation \eqref{CN-mu} can be represented as:
\begin{equation}
	\begin{aligned}
	\muhalf=&F_{1+\phi^n}(1+\phi^{n+1}) - F_{1-\phi^n}(1-\phi^{n+1})-\theta_0\ext{\phi}-\epsilon^2\Dh\impp{\phi}\\
	&\quad+\tau\left(\mathcal{N}(\phi^{n+1})-\mathcal{N}(\phi^n)   \right).
	\end{aligned}
\end{equation}
Moreover, the following results are useful in the convergence analysis: 

\begin{lm}\label{estimate of Fa} \cite{chen22a, chen24a} 
	Let $a>0$ be fixed, then
	\begin{enumerate}
		\item $F_a'(x)=\frac{G'(x)(x-a)-\left(G(x)-G(a)\right)}{(x-a)^2}\geq0$, for any $x>0$. 
		\item $F_a(x)$ is an increasing function of $x$, and $F_a(x)\leq F_a(a)=\ln a+1$ for any $0<x<a$.
		\item $F_a'(x)=\frac{1}{2\eta}$, for some $\eta$ between $a$ and $x$.
	\end{enumerate}
\end{lm}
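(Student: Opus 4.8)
The plan is to treat the three parts in order, exploiting that $G(x)=x\ln x$ is smooth and strictly convex on $(0,\infty)$, with $G'(x)=\ln x+1$ and $G''(x)=1/x>0$. Everything reduces to elementary calculus facts about difference quotients of a convex function, so I expect no serious obstacle, only one mean-value step in Part~3 that needs a sign observation.

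For Part~1, I would first obtain the displayed formula for $F_a'$ by a direct application of the quotient rule to $F_a(x)=\frac{G(x)-G(a)}{x-a}$. To establish nonnegativity, I would introduce the numerator $\psi(x):=G'(x)(x-a)-\bigl(G(x)-G(a)\bigr)$ and observe that $\psi(a)=0$ while $\psi'(x)=G''(x)(x-a)$, which carries the sign of $x-a$ because $G''>0$. Hence $\psi$ decreases on $(0,a)$ and increases on $(a,\infty)$, so it attains its global minimum value $0$ at $x=a$; consequently $\psi(x)\ge 0$, and since $(x-a)^2>0$ for $x\ne a$, we conclude $F_a'(x)\ge 0$. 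Conceptually this is just the statement that the secant slopes of a convex function are monotone.

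Part~2 then follows almost immediately. Monotonicity of $F_a$ is exactly Part~1. The value at the removable singularity is $F_a(a)=\lim_{x\to a}\frac{G(x)-G(a)}{x-a}=G'(a)=\ln a+1$, which I would read off as a difference-quotient limit (equivalently by L'H\^opital). Combining this with monotonicity, for $0<x<a$ I obtain $F_a(x)\le F_a(a)=\ln a+1$.

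Part~3 is where I expect the only genuine (if mild) difficulty. Integrating the identity $\psi'(t)=G''(t)(t-a)$ from $a$ to $x$ and using $\psi(a)=0$ gives $\psi(x)=\int_a^x G''(t)(t-a)\,dt$. On the interval between $a$ and $x$ the weight $t-a$ keeps a constant sign, so the weighted mean value theorem for integrals supplies some $\eta$ between $a$ and $x$ with $\int_a^x G''(t)(t-a)\,dt=G''(\eta)\int_a^x (t-a)\,dt=\tfrac12 G''(\eta)(x-a)^2$. Dividing by $(x-a)^2$ yields $F_a'(x)=\tfrac12 G''(\eta)=\frac{1}{2\eta}$, since $G''(\eta)=1/\eta$. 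Alternatively, I would apply Cauchy's mean value theorem to the pair $\psi$ and $(x-a)^2$ (both vanishing at $a$), reaching the same conclusion in one step via $\frac{\psi'(\zeta)}{2(\zeta-a)}=\frac{G''(\zeta)(\zeta-a)}{2(\zeta-a)}=\tfrac12 G''(\zeta)$. The single point to watch throughout is the constant sign of the weight $t-a$ (equivalently, the non-vanishing of the derivative of $(x-a)^2$ away from $a$), which is precisely what legitimizes the mean-value step.
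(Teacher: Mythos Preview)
Your proof is correct in all three parts: the quotient-rule computation and convexity argument for Part~1, the limit evaluation $F_a(a)=G'(a)=\ln a+1$ combined with monotonicity for Part~2, and the weighted mean value theorem (or equivalently Cauchy's MVT) applied to $\psi(x)=\int_a^x G''(t)(t-a)\,dt$ for Part~3 are all valid. The paper itself does not supply a proof of this lemma, citing instead \cite{chen22a, chen24a}; your elementary argument based on convexity of $G$ is the standard route and would be the expected content of any such proof.
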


The positivity-preserving property and unique solvability has been established in~\cite{chen24a}.

\begin{thm} \label{thm: positivity}  \cite{chen24a} 
		Given cell-centered functions $\phi^{n}$, $\phi^{n-1}$, with $\nrminf{\phi^n}$, $\nrminf{\phi^{n-1}}<1$ and $\overline{\phi^n}=\overline{\phi^{n-1}}=\beta_0<1$, then there exists a unique cell-centered solution $\phi^{n+1}$ to \eqref{CN-NS}-\eqref{CN-incompressible}, with $\nrminf{\phi^{n+1}}<1$, and $\overline{\phi^{n+1}}=\beta_0$.
\end{thm}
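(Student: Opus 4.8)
The plan is to establish the three assertions — mass conservation, existence together with the strict bound $\nrminf{\phi^{n+1}}<1$, and uniqueness — in that order, treating the velocity coupling as an outer layer wrapped around a convex variational core for the phase variable. Mass conservation is immediate: summing \eqref{CN-CH} over the cell centers, both the advective flux $\nabh\cdot(\mAh\ext{\phi}\imp{\bhu})$ and the diffusion $\Dh\muhalf$ are discrete divergences/Laplacians and hence have vanishing mean under the prescribed no-flux/no-penetration boundary conditions, so $\overline{\phi^{n+1}}=\overline{\phi^n}=\beta_0$. This shows the natural solution space for $\phi^{n+1}$ is the affine, mean-$\beta_0$ slice of $\mathring{\mathcal C}_{\Omega}$, on which $(-\Dh)^{-1}$ and $\nrmh{\cdot}$ are well defined.

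For existence and positivity I would decouple into a linear velocity subsolver and a strictly convex phase subproblem. The key structural observation is that $\ext{\phi}=\frac32\phi^n-\hf\phi^{n-1}$ and $\ext{\bu}$ are explicit (known) data. Hence, for any prescribed $\imp{\bhu}$, the pair \eqref{CN-CH}--\eqref{CN-mu} is exactly the Euler--Lagrange equation, on the mean-$\beta_0$ slice, of the functional
$$
\mathcal{J}(\phi)=\frac{1}{2\tau}\nrmh{\phi-\phi^n}^2+\eipC{\Lambda_n(\phi)}{1}+\frac{3\epsilon^2}{8}\nrm{\nabh\phi}_2^2+\tau\,\eipC{\widehat{\mathcal{N}}(\phi)}{1}-\eipC{\ell_n}{\phi},
$$
where $\ell_n$ collects the explicit linear contributions (the $\theta_0\ext{\phi}$ term, the $\frac14\phi^{n-1}$ piece of $\impp{\phi}$, the $\tau\mathcal{N}(\phi^n)$ term, and the advective flux $(-\Dh)^{-1}\nabh\cdot(\mAh\ext{\phi}\imp{\bhu})$), $\Lambda_n$ is the antiderivative with $\Lambda_n'(\phi)=F_{1+\phi^n}(1+\phi)-F_{1-\phi^n}(1-\phi)$, and $\widehat{\mathcal{N}}(\phi)=(1+\phi)\ln(1+\phi)+(1-\phi)\ln(1-\phi)$ satisfies $\widehat{\mathcal{N}}{}'=\mathcal{N}$. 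By Lemma~\ref{estimate of Fa} each $x\mapsto F_a(x)$ has $F_a'\ge0$, so the chain rule makes $\Lambda_n$ convex; together with the convex quadratics and the strictly convex entropy $\widehat{\mathcal{N}}$, the functional $\mathcal{J}$ is strictly convex and coercive.

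The heart of the argument — and the step I expect to be the main obstacle — is to show that the constrained minimizer on a closed slice $\{-1+\delta\le\phi\le1-\delta,\ \overline{\phi}=\beta_0\}$ is in fact interior, i.e.\ never touches the singular set $\{\phi=\pm1\}$. Here the artificial regularization is decisive: since $\widehat{\mathcal{N}}{}'(\phi)=\mathcal{N}(\phi)\to+\infty$ as $\phi\to1^-$ and $\to-\infty$ as $\phi\to-1^+$, if the minimizer attained the value $1-\delta$ at some cell then, for $\delta$ small, a mean-zero competitor transferring a small amount of mass from the extremal cell to a generic interior cell strictly decreases $\tau\,\eipC{\widehat{\mathcal{N}}(\phi)}{1}$ by an amount that dominates the bounded change in all the other (locally Lipschitz) terms of $\mathcal{J}$, contradicting minimality. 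Hence for each fixed $h,\tau$ the minimizer lies strictly inside, $\nrminf{\phi^{n+1}}<1$, and solves \eqref{CN-CH}--\eqref{CN-mu}; the stronger, uniform-in-$(h,\tau)$ separation needed later for the convergence analysis is not required here. Given such a $\phi^{n+1}$ (hence $\muhalf$), equation \eqref{CN-NS} is a linear problem for $\bhu^{n+1}$ whose operator $\tfrac1\tau I-\tfrac\nu2\Dh$ plus the skew-symmetric convective part (skew by \eqref{summation-1}) is positive definite, hence uniquely solvable, and \eqref{CN-pressure-correction}--\eqref{CN-incompressible} is the standard discrete Hodge projection determining $\bu^{n+1}$ and $p^{n+1}$ uniquely (up to an additive constant in $p^{n+1}$). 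Composing ``velocity $\mapsto$ phase (convex minimization) $\mapsto$ velocity (Stokes solve)'' yields a continuous self-map of a bounded convex set of velocities, the bound coming from the energy estimate of~\cite{chen24a}, and Brouwer's theorem produces a coupled solution with positivity preserved.

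Finally, uniqueness I would obtain by a difference-and-test argument that exploits the explicitness of $\ext{\phi}$. Assuming two solutions $(\phi_1,\bhu_1)$, $(\phi_2,\bhu_2)$, I subtract and test the phase difference with $(-\Dh)^{-1}(\phi_1-\phi_2)$ (equivalently pair with $\mu_1-\mu_2$) and the momentum difference with $\tfrac1\gamma$ times the velocity difference. Because $\ext{\phi}$ is identical for both solutions, the two coupling contributions are genuinely skew-adjoint through \eqref{summation-5} and cancel exactly, leaving only nonnegative quantities: the viscous and $\ell^2$ velocity terms, $\tfrac1\tau\nrmh{\phi_1-\phi_2}^2$, the gradient term, and the monotone pieces $\eipC{F_{1+\phi^n}(1+\phi_1)-F_{1+\phi^n}(1+\phi_2)}{\phi_1-\phi_2}\ge0$, its $1-\phi$ analogue, and $\tau\eipC{\mathcal{N}(\phi_1)-\mathcal{N}(\phi_2)}{\phi_1-\phi_2}\ge0$, all nonnegative by the monotonicity in Lemma~\ref{estimate of Fa} and of $\mathcal{N}$. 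Their sum vanishing forces $\phi_1=\phi_2$ and $\bhu_1=\bhu_2$, after which the projection step fixes $\bu^{n+1}$ and $p^{n+1}$. Throughout, the delicate point remains the barrier estimate, since only the strict bound $\nrminf{\phi^{n+1}}<1$ keeps $\muhalf$ and the entire construction well defined.
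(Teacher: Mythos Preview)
The paper does not prove this theorem; it is quoted from \cite{chen24a}, and the only methodological hint is the remark in the introduction that ``an iteration process is constructed to establish these theoretical properties.'' Your outline is in that spirit---iterate between a linear velocity solve and a strictly convex phase minimization, with the artificial regularization $\tau\mathcal{N}$ supplying the logarithmic barrier that forces $\nrminf{\phi^{n+1}}<1$---and the mass-conservation, convexity (via Lemma~\ref{estimate of Fa}), and barrier portions are correctly identified.

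Two points need repair. First, in the Brouwer step you invoke ``the bound coming from the energy estimate of~\cite{chen24a},'' but that estimate holds for a \emph{coupled} solution, not for an iterate of your map $\bv\mapsto\imp{\bhu}$: in the iterate the advective term in \eqref{CN-CH} carries the input $\bv$, while \eqref{CN-NS} produces the output, so the cancellation behind the energy identity does not occur along the iteration, and $\nabla_h\muhalf$ (hence the forcing in the momentum equation) is not a~priori bounded uniformly in $\bv$. You would need either a direct bound on the map or a Leray--Schauder/homotopy argument that uses the a~priori energy estimate only for genuine \emph{solutions}. Second, in the uniqueness step, testing the phase difference with $(-\Delta_h)^{-1}(\phi_1-\phi_2)$ does \emph{not} make the coupling cancel: that test produces the advective residual $-\langle\mAh\ext{\phi}(\imp{\bhu}_1-\imp{\bhu}_2),\nabla_h(-\Delta_h)^{-1}(\phi_1-\phi_2)\rangle_1$, which does not match the momentum-side term $-\gamma\langle\mAh\ext{\phi}\nabla_h(\mu_1-\mu_2),\imp{\bhu}_1-\imp{\bhu}_2\rangle_1$. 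The correct test function for the subtracted phase equation is $\mu_1-\mu_2$; then \eqref{summation-5} gives exactly the cancellation you claim, and the resulting identity
\[
\frac{2\gamma}{\tau}\langle\phi_1-\phi_2,\mu_1-\mu_2\rangle_c+2\gamma\|\nabla_h(\mu_1-\mu_2)\|_2^2+\frac{1}{\tau}\|\bhu_1^{n+1}-\bhu_2^{n+1}\|_2^2+2\nu\|\nabla_h(\imp{\bhu}_1-\imp{\bhu}_2)\|_2^2=0,
\]
combined with your monotonicity observations (which are correct), forces $\phi_1=\phi_2$ and $\bhu_1^{n+1}=\bhu_2^{n+1}$.
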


\begin{rem} 
The proposed chemical potential approximation~\eqref{CN-mu} is based on a modified Crank-Nicolson discretization, combined with a nonlinear artificial regularization. Such a nonlinear artificial regularization leads to an implicit treatment for the singular logarithmic terms, which in turn ensures the positivity-preserving and unique solvability properties. This technique has been widely used in various gradient flows, including the Cahn-Hilliard equation with Flory-Huggins potential~\cite{chen22a, chen19b, Dong2021a, Dong2022a, dong19b, dong20a, Qin2021, Yuan2021a, Yuan2022a}, the liquid film droplet model~\cite{ZhangJ2021}, the Poisson-Nernst-Planck system~\cite{LiuC2021a, LiuC2022a, LiuC2023a, qiao2014}, the reaction-diffusion system~\cite{LiuC2021b, LiuC2022c, LiuC2022b}, etc. The convex nature of the singular energy part prevents the numerical solution approach the singular limit values of $\pm1$, which turns out to be the key point in the theoretical arguments. 
\end{rem}

Now we proceed into the convergence analysis. For the CHNS system \eqref{equation-CHNS-1}-\eqref{equation-CHNS-4}, the existence of a global-in-time weak solution has been proved in~\cite{liu03}, under a polynomial approximation to the double well energy potential. The weak solution to the CHNS system with a Flory-Huggins phase field energy potential could be analyzed in the same fashion. Of course, the regularity of the weak solutions is not sufficient to justify the optimal rate convergence analysis, and a strong solution to the CHNS system, with higher order regularities, is needed in the error  estimate. It was proved in~\cite{abels09b} that unique strong solutions exist for the PDE system~\eqref{equation-CHNS-1}-\eqref{equation-CHNS-4}, with a constant mobility assumption. In more details, higher order regularities could be stated as follows: for any initial data $\bu_0 \in H^m (\Omega)$, $\phi_0 \in H^{m+1} (\Omega)$, there is an estimate for $\| \bu (t) \|_{H^m}$ and $\| \phi (t) \|_{H^{m+1}}$, $m \ge 1$, globally-in-time for 2D, and locally-in-time for 3D, under appropriate compatibility conditions between the initial data and boundary conditions \cite{temam01}. 
Therefore, for the exact solution $(\phi_e, \bu_e, p_e)$ to the FHCHNS system~\eqref{equation-CHNS-1}-\eqref{equation-CHNS-4},   we could always assume that the exact solution has regularity of class $\mathcal{R}$, with sufficiently regular initial data: 
\begin{equation}
\phi_e, \, \bu_e, \, p_e \in \mathcal{R} := H^4 \left(0,T; C_{\rm per}(\Omega)\right) \cap H^3 \left(0,T; C^2_{\rm per}(\Omega)\right) \cap L^\infty \left(0,T; C^6_{\rm per}(\Omega)\right).
	\label{assumption:regularity.1}
	\end{equation}
In addition, we assume that the following separation property is valid for the exact phase variable: for some $\delta >0$, 
	\begin{equation}
 - 1 + 2 \delta < \phi_e < 1 - 2 \delta , 
    \label{assumption:separation}
	\end{equation}
which is satisfied at a point-wise level, for all $t\in[0,T]$. Define $\Phi_N (\, \cdot \, ,t) := {\cal P}_N \phi_e (\, \cdot \, ,t)$, the (spatial) Fourier projection of the exact solution into ${\cal B}^K$, the space of trigonometric polynomials of degree up to and including $K$ (with $N=2K +1$), only in the Cosine wave mode in both the $x$ and $y$ directions, due to the homogeneous Neumann boundary condition.  The following projection approximation is standard: if $\phi_e \in L^\infty(0,T;H^\ell_{\rm per}(\Omega))$, for some $\ell\in\mathbb{N}$,
	\begin{equation}
\nrm{\Phi_N - \phi_e}_{L^\infty(0,T;H^k)}
   \le C h^{\ell-k} \nrm{\phi_e }_{L^\infty(0,T;H^\ell)},  \quad \forall \ 0 \le k \le \ell ,
    \, \, j= 1, 2. \label{projection-est-0}
	\end{equation}
By $\Phi_N^m$, $\phi_e^m$ we denote $\Phi_N (\, \cdot \, , t_m)$ and $\phi_e (\, \cdot \, , t_m)$, respectively, with $t_m = m\cdot \dt$. Since $\Phi_N \in {\cal B}^K$, the mass conservative property is available at the discrete level:
	\begin{equation}
\overline{\Phi_N^m} = \frac{1}{|\Omega|}\int_\Omega \, \Phi_N ( \cdot, t_m) \, d {\bf x} = \frac{1}{|\Omega|}\int_\Omega \, \Phi_N ( \cdot, t_{m-1}) \, d {\bf x} = \overline{\Phi_N^{m-1}} ,  \quad \forall \ m \in\mathbb{N}.
	\label{mass conserv-1}
	\end{equation}
On the other hand, the numerical solution of the phase variable is also mass conservative at the discrete level, as given by~\eqref{mass conserv-2}. Meanwhile, we use the mass conservative projection for the initial data:  $\phi^0 = {\mathcal P}_h \Phi_N (\, \cdot \, , t=0)$, that is
	\begin{equation}
\phi^0_{i,j} := \Phi_N (p_i, p_j, t=0)  .
	\label{initial data-0}
	\end{equation}	
In turn, the error grid function for the phase variable is defined as
	\begin{equation}
\tilde{\phi}^m := \mathcal{P}_h \Phi_N^m - \phi^m ,  \quad \forall \ m \in \left\{ 0 ,1 , 2, 3, \cdots \right\} .
	\label{error function-1}
	\end{equation}
Therefore, it follows that  $\overline{e^m} =0$, for any $m \in \left\{ 0 ,1 , 2, 3, \cdots \right\}$. Of course, the discrete norm $\nrm{ \, \cdot \, }_{-1,h}$ is well defined for the error grid function $e_\phi^m$. 

  For the velocity and pressure variables, we just take $\bU = \bu_e$ and $P = p_e$. Accordingly, the error grid functions for the velocity and pressure variables are defined as
	\begin{equation}
\tilde{\bu}^m := \mathcal{P}_h \bU^m - \bu^m = ( \tilde{u}^m , \tilde{v}^m)^T,  \, \, \, 
\tilde{p}^m := \mathcal{P}_h P^m - p^m, \quad \forall \ m \in \left\{ 0 ,1 , 2, 3, \cdots \right\} .
	\label{error function-2}
	\end{equation}
  
  The following theorem is the main result of this article.

\begin{thm}
	\label{thm: convergence}
Given initial data $\phi_e (\, \cdot \, ,t=0), \, \u_e (\, \cdot \, ,t=0) \in C^6_{\rm per}(\Omega)$, suppose the exact solution for FHCHNS system~\eqref{equation-CHNS-1}-\eqref{equation-CHNS-4} is of regularity class $\mathcal{R}$. Then, provided $\tau$ and $h$ are sufficiently small, and under the linear refinement requirement $C_1 h \le \tau \le C_2 h$, we have
	\begin{equation}
  \| \nabla_h \tilde{\phi}^n \|_2 +  \| \tilde{\bu}^n \|_2 
  + \Bigl( \frac{\epsilon^2}{8} \tau   \sum_{m=1}^{n} \| \nabla_h \Delta_h \tilde{\phi}^m \|_2^2 \Bigr)^{\hf} 
   \le C ( \tau^2+ h^2 ) ,
	\label{convergence-0}
	\end{equation}
for all positive integers $n$, such that $t_n=n\tau \le T$, where $C>0$ is independent of $\tau$ and $h$.
	\end{thm}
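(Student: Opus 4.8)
The plan is to carry out a three-stage consistency--rough--refined program, adapted to the coupled and singular structure. The backbone is the system of error equations obtained by subtracting the scheme \eqref{CN-NS}--\eqref{CN-incompressible} from the analogous relations satisfied by a carefully constructed approximate solution $(\hat\phi,\hat\bu,\hat p)$. First, I would not compare the numerical solution directly to the projection $\Phi_N$, but rather to a higher-order corrected profile built from an asymptotic expansion of the exact solution, with temporal correctors up to order $\tau^3$ and spatial correctors up to order $h^4$, obtained by solving the associated linearized auxiliary problems. The correctors are chosen so that $(\hat\phi,\hat\bu,\hat p)$ satisfies the fully discrete scheme up to a local truncation error of size $O(\tau^3+h^4)$ in the relevant discrete norms. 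Two features of this construction are essential: since $\hat\phi$ is an $O(\tau^2+h^2)$ perturbation of $\Phi_N\approx\phi_e$, the separation property \eqref{assumption:separation} is inherited at the discrete level, so that $-1+\delta<\hat\phi^m<1-\delta$ pointwise; and the extra accuracy leaves room for the loss of powers of $h$ incurred by the inverse inequality in the rough estimate below.

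Second, I would perform a rough error estimate on $\tilde\phi=\hat\phi-\phi$ and $\tilde\bu=\hat\bu-\bu$. At this stage the numerical phase variable is only known to satisfy $\nrminf{\phi^m}<1$ by Theorem \ref{thm: positivity}, so the singular logarithmic terms cannot yet be bounded uniformly. The key observation is that one does not need such a bound for a crude estimate: the logarithmic contribution is the derivative of a convex functional, hence monotone, so the corresponding error inner product carries a favorable sign and may simply be discarded, using that both $\phi^m\in(-1,1)$ and $\hat\phi^m$ is separated together with Lemma \ref{estimate of Fa}. Testing the phase error equation to control $\nrm{\nabla_h\tilde\phi}_2$ and the momentum error equation against $\tilde\bu$, handling the convection and coupling terms with the summation-by-parts identities \eqref{summation-1}--\eqref{summation-5} and the Poincar\'e inequalities of Proposition \ref{prop: Poincare} (recalling $\overline{\tilde\phi^m}=0$), a discrete Gronwall argument yields a crude bound of the form $\nrm{\nabla_h\tilde\phi^n}_2+\nrm{\tilde\bu^n}_2\le C(\tau^3+h^4)$. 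Under the linear refinement constraint $C_1 h\le\tau\le C_2 h$, an inverse inequality converts this into $\nrminf{\tilde\phi^n}=o(1)$, which forces $-1+\delta'<\phi^n<1-\delta'$ for some $\delta'>0$ uniformly in $n$. This discrete separation is the crucial gain: it makes all logarithmic factors, and their first derivatives via Lemma \ref{estimate of Fa}, uniformly bounded.

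Third, with the separation in hand, I would return to a refined energy estimate to recover the optimal rate. Now testing the phase error equation so as to extract the surface-diffusion dissipation $\frac{\epsilon^2}{8}\tau\sum_m\nrm{\nabla_h\Delta_h\tilde\phi^m}_2^2$ appearing in \eqref{convergence-0}, testing the momentum error equation against $\tilde\bu$ to produce $\nu\nrm{\nabla_h\tilde\bu}_2^2$ dissipation, and eliminating the pressure error through the projection step \eqref{CN-pressure-correction} together with the discrete incompressibility \eqref{CN-incompressible} and identity \eqref{summation-2}, one assembles a single energy inequality for $\nrm{\nabla_h\tilde\phi^n}_2^2+\nrm{\tilde\bu^n}_2^2$. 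The singular error terms are now controlled by the mean-value form $F_a'=\tfrac{1}{2\eta}$ of Lemma \ref{estimate of Fa}, with $\eta$ bounded away from $0$ by the separation, yielding a uniformly Lipschitz dependence and hence $O(\tau^2+h^2)$ consistency for the modified Crank--Nicolson logarithmic discretization; the artificial regularization term carries a prefactor $\tau$ and is harmless after separation. Each nonlinear and coupled error term is estimated by Young's inequality with a small parameter so that its leading part is absorbed into the $H_h^3$ and $H_h^1$ dissipations, and a final discrete Gronwall inequality delivers \eqref{convergence-0}.

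I expect the main obstacle to be the treatment of the coupled nonlinear error terms, in particular the surface-force term $-\gamma\mAh\tilde\phi\,\nabh\muhalf$ in the momentum error equation and the transport term $\nabh\cdot(\mAh\tilde\phi\,\bu)$ in the phase error equation. Because $\muhalf$ contains $-\epsilon^2\Dh\phi$, these couplings generate factors of $\nabla_h\Delta_h\tilde\phi$ tied to velocity errors, while the only control of the phase error available at the energy level is in $H_h^1\cap\ell^2(H_h^3)$ rather than the stronger $\ell^\infty(\ell^2)\cap\ell^2(H_h^2)$; this is precisely why the weaker-regularity framework is forced. The delicate point is to split these cross terms, via the identity \eqref{summation-5} and repeated Young inequalities, so that every occurrence of $\nabla_h\Delta_h\tilde\phi$ appears with a coefficient small enough to be absorbed into the $\frac{\epsilon^2}{8}\tau\sum_m\nrm{\nabla_h\Delta_h\tilde\phi^m}_2^2$ dissipation, while the remaining lower-order factors are closed by Gronwall. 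A secondary difficulty is the careful verification that the constructed higher-order profile indeed produces an $O(\tau^3+h^4)$ residual for the nonstandard temporal stencils, namely the extrapolation $\ext\phi$, the alternate stencil $\impp\phi$, and the modified logarithmic quotient, which requires matching several Taylor expansions simultaneously.
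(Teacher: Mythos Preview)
Your three-stage program (higher-order consistency expansion to $O(\tau^3+h^4)$, rough estimate exploiting monotonicity of the logarithmic part, refined estimate after separation) is exactly the paper's strategy, and you have correctly identified the key structural ideas. Two specific points in your execution differ from the paper and would cause trouble.

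First, the rough estimate is not a Gronwall argument yielding $C(\tau^3+h^4)$. To exploit the monotonicity you must test the phase error equation against the chemical-potential error $e_\mu^{n+1/2}$ itself, so that the sign-definite term $\langle e_\phi^{n+1},\mathcal{N}(\breve\Phi^{n+1})-\mathcal{N}(\phi^{n+1})\rangle_c\ge 0$ appears directly; but this testing also produces a term $\tau^{-1}\langle e_\phi^{n},e_\mu^{n+1/2}\rangle_c$ whose bound costs a factor $\tau^{-2}\|e_\phi^{n}\|_2^2$, which precludes a Gronwall closure. The paper instead runs this as a \emph{single-step} bound under an inductive a-priori assumption at the previous levels, obtaining only $\|\nabla_h e_\phi^{n+1}\|_2\le C(\tau^{9/4}+h^{13/4})$. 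That weaker rate is still enough, via the inverse inequality under $C_1 h\le\tau\le C_2 h$, to force $\|e_\phi^{n+1}\|_\infty\le\delta/2$ and hence separation at $t^{n+1}$; the full $O(\tau^3+h^4)$ rate, and the recovery of the a-priori assumption, come only from the subsequent refined estimate.

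Second, and more critically, the coupled cross terms are not closed by Young's inequality. In the refined step the paper tests the phase error equation with $-\Delta_h\bar{\bar e}_\phi^{\,n+1/2}$ and adds $\epsilon^{-2}/(2\gamma)$ times the momentum error estimate; the dangerous term $\langle\mathcal{A}_h\tilde\phi^{n+1/2}\,\nabla_h\Delta_h\bar{\bar e}_\phi^{\,n+1/2},\bar{\hat e}_{\bu}^{\,n+1/2}\rangle_1$ then appears on both sides with opposite signs and cancels \emph{exactly}, mirroring the continuous energy law. An attempt to absorb this term by Young's inequality instead would force a smallness constraint of the type $\gamma^2\epsilon^2 C_0^2/\nu\ll 1$ on the physical parameters, which is not assumed. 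The same exact cancellation (with $\nabla_h e_\mu^{n+1/2}$ in place of $-\epsilon^2\nabla_h\Delta_h\bar{\bar e}_\phi^{\,n+1/2}$) is what makes the rough step close as well.
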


\section{Optimal rate convergence analysis} \label{sec:convergence} 
Throughout the following analysis, $C$ represents a constant that may depend on $\epsilon$, $\nu$, $\theta_0$, $\Omega$ and $\delta$, but is independent on $h$ and $\tau$.


\subsection{Higher order consistency analysis of \eqref{CN-NS}-\eqref{CN-incompressible}} 

By consistency, the projection solution $\Phi_N$, the exact profiles $\bU$ and $P$ solve the discrete equation~\eqref{CN-NS}-\eqref{CN-incompressible} with a second order (in both time and space) local truncation error. Meanwhile, it is observed that this leading local truncation error will not be enough to recover an a-priori $W_h^{1, \infty}$ bound for the numerical solution to recover the nonlinear analysis, as well as the separation property. To overcome this difficulty, we have to use a higher order consistency analysis, via a perturbation argument, to recover such a bound in later analysis. In more details, we construct a sequence of supplementary fields, and $\breve{\bU}$, $\breve{\Phi}$, $\breve{P}$, 
satisfying 
\begin{align}
&\breve{\textbf{U}}= \bm{\mathcal{P}}_H ( \textbf{U}+ \tau^2 \bU_{\tau,1} + h^2\bU_{h,1}),\ \breve{\Phi}=\Phi_N+\mathcal{P}_N(\tau^2 \Phi_{\tau,1} +h^2\Phi_{h,1}), \label{construction-1} \\
&\breve{P}= \bm{\mathcal{I}}_h ( P + \tau^2 P_{\tau,1} + h^2P_{h,1}),\quad 
\label{construction-2} 
\end{align} 
so that a higher $O (\tau^3 + h^4)$ consistency is satisfied with the given numerical scheme~\eqref{CN-NS}-\eqref{CN-incompressible}, in which $\bm{\mathcal{P}}_H$ stands for a discrete Helmholz interpolation (into the divergence-free space), and $\bm{\mathcal{I}}_h$ is the standard point-wise interpolation. 
The constructed fields $\bU_{\tau,1}$, $\Phi_{\tau,1}$,  $P_{\tau,1}$, 
$\bU_{h,1}$,  $\Phi_{h,1}$ 
and $P_{h,1}$, which will be obtained using a perturbation expansion, will depend solely on the exact solution $(\phi_e, \bu_e, p_e)$. 
In turn, we estimate the numerical error function between the constructed profile and the numerical solution, instead of a direct comparison between the numerical solution and projection solution. Such an $O (\tau^3 + h^4)$ truncation error enables us to derive a higher order convergence estimate, in the $\| \cdot \|_{H_h^1}$ norm for the phase variable and $\| \cdot \|_2$ norm for the velocity variable, which leads to a desired $\| \cdot \|_\infty$ bound of the numerical solution. 
This approach has been reported for a wide class of nonlinear PDEs; see the related works for the incompressible fluid equation~\cite{E95, E02, STWW03, STWW07, WL00, WL02, WLJ04}, various gradient equations~\cite{baskaran13b, guan17a, guan14a, LiX2021, LiX2023a, LiX2024a}, the porous medium equation based on the energetic variational approach~\cite{duan22a, duan20a}, nonlinear wave equation~\cite{WangL15}, etc. 

  The following bilinear form $b$ is introduced to facilitate the nonlinear analysis: 
\begin{align}
&b\left(\bu,\bv\right)= \bu\cdot\nabla \bv , \quad 
  b_h \left(\bu,\bv\right)=\frac{1}{2} ( \bu\cdot\nabh\bv+\nabh\cdot (\bu\bv^{T} ) ) . 
	\label{b}
\end{align}

The following intermediate velocity vector is defined, which is needed in the leading order consistency analysis: 
\begin{equation} 
   \hat{\bU}^{n+1} = \bU^{n+1} + \frac12 \tau \nabla ( P^{n+1} - P^n ) .  \label{consistency-hatu-1} 
\end{equation} 
A careful Taylor expansion in time reveals that 
\begin{align}
&\frac{\hat{\bU}^{n+1}-\bU^n}{\tau}+b(\ext{\bU},\bar{\hat{\bU}}^{n+\hf})+\nabla P^n -\nu\Delta \bar{\hat{\bU}}^{n+\hf} =-\gamma\ext{\Phi}_N\nabla\Muhalf\nonumber\\
&\qquad\qquad\qquad\qquad\qquad\qquad\qquad\qquad\qquad\qquad\qquad\qquad\quad+\tau^2 \mathbf{G}_0^{n+\frac{1}{2}}+O(\tau^3+h^{m_0}), \label{trun-NS}\\
&\frac{\Phi_N^{n+1}-\Phi_N^n}{\tau}+\nabla\cdot\left(\ext{\Phi}_N \bar{\hat{\bU}}^{n+\hf} \right)=\Delta\Muhalf+\tau^2 H_0^{n+\frac{1}{2}}+O(\tau^3+h^{m_0}),	\label{trun-CH}\\
&\Muhalf=\frac{G(1+\Phi_N^{n+1})-G(1+\Phi_N^{n})}{\Phi_N^{n+1}-\Phi_N^{n}}+\frac{G(1-\Phi_N^{n+1})-G(1-\Phi_N^{n})}{\Phi_N^{n+1}-\Phi_N^{n}}-\theta_0\ext{\Phi}_N-\epsilon^2\Delta\impp{\Phi}_N\notag\\
&\qquad\qquad+\tau\left( \mathcal{N}(\Phi^{n+1}_N)-\mathcal{N}(\Phi^n_N)   \right) ,  
  \label{trun-mu}\\
  &\frac{\bU^{n+1}-\hat{\bU}^{n+1}}{\tau}+ \frac12 \nabla ( 
	P^{n+1} - P^n )=0,	\label{trun-pressure-correction}\\   
&\nabla\cdot\bU^{n+1}=0,	\label{trun-incompressible}
\end{align}
where $\|\mathbf{G}_0^{n+\hf} \|$, $\|H_0^{n+\hf} \|$, $\|K_0\|\le C$, and $C$ depends on the regularity of the exact solutions.

The correction functions $\bU_{\tau,1}$, $\Phi_{\tau,1}$ 
and $P_{\tau,1}$ are solved by the following PDE system 
\begin{align*}
&	\partial_t\bU_{\tau,1}+(\bU_{\tau,1}\cdot\nabla)\bU+(\bU\cdot\nabla)\bU_{\tau,1}+\nabla P_{\tau,1}-\nu\Delta \bU_{\tau,1}=-\gamma\Phi_{\tau,1}\nabla \text{M}-\gamma\Phi_N \nabla \text{M}_{\tau,1}-\mathbf{G}_0,\\
&\partial_t\Phi_{\tau,1}+\nabla\cdot(\Phi_{\tau,1}\bU+\Phi_N \bU_{\tau,1})=\Delta\text{M}_{\tau,1}-H_0,\\
&\text{M}_{\tau,1}=\frac{\Phi_{\tau,1}}{1+\Phi_N}+\frac{\Phi_{\tau,1}}{1-\Phi_N}-\theta_0\Phi_{\tau,1}-\epsilon^2\Delta\Phi_{\tau,1} , \\  
&\nabla\cdot \bU_{\tau,1}=0 , 	
\end{align*}
combined with the homogeneous Neumann boundary condition for $\Phi_{\tau, 1}$ and $\text{M}_{\tau ,1}$, no-penetration, free-slip boundary condition for $\bU_{\tau ,1}$. Existence of a solution of the above linear PDE system is straightforward. Meanwhile, a similar intermediate velocity vector is introduced as  
\begin{equation} 
   \hat{\bU}_{\tau,1}^{n+1} = \bU_{\tau, 1}^{n+1} + \frac12 \tau \nabla ( P_{\tau, 1}^{n+1} - P_{\tau, 1}^n ) .  \label{consistency-hatu-2} 
\end{equation} 
In turn, an application of a temporal discretization to the above linear PDE system for $\bU_{\tau,1}$,  $\Phi_{\tau,1}$, 
$P_{\tau,1}$ and $\hat{\bU}_{\tau,1}$ indicates that
\begin{align}
&\frac{\hat{\bU}_{\tau,1}^{n+1}-\bU_{\tau,1}^n}{\tau}+b(\ext{\bU}_{\tau,1},\imp{\hat{\bU}})+b(\ext{\bU},\imp{\hat{\bU}}_{\tau,1})+\nabla P_{\tau,1}^n -\nu\Delta\imp{\hat{\bU}}_{\tau,1} \nonumber\\
&\qquad\qquad\qquad=-\gamma\ext{\Phi}_{\tau,1}\nabla\Muhalf-\gamma\ext{\Phi}_N\nabla\Muhalf_{\tau,1}-\mathbf{G}_0^{n+\frac{1}{2}}+O(\tau^2), \label{trun1-NS}\\
&\frac{\Phi_{\tau,1}^{n+1}-\Phi_{\tau,1}^n}{\tau}+\nabla\cdot\left(\ext{\Phi}_{\tau,1}\imp{\hat{\bU}}+\ext{\Phi}_N\imp{\hat{\bU}}_{\tau,1}\right)=\Delta\Muhalf_{\tau,1}- H_0^{n+\frac{1}{2}}+O(\tau^2),	\label{trun1-CH}\\
&\Muhalf_{\tau,1}=\frac{1}{\tau^2}\left(\frac{G(1+\Phi_N^{n+1}+\tau^2\Phi_{\tau,1}^{n+1})-G(1+\Phi_N^{n}+\tau^2\Phi_{\tau,1}^{n})}{\Phi_N^{n+1}+\tau^2\Phi_{\tau,1}^{n+1}-\Phi_N^{n}-\tau^2\Phi_{\tau,1}^{n}}-\frac{G(1+\Phi_N^{n+1})-G(1+\Phi_N^{n})}{\Phi_N^{n+1}-\Phi_N^{n}}\right)\nonumber\\
&\qquad\quad+\frac{1}{\tau^2}\left(\frac{G(1-\Phi_N^{n+1}-\tau^2\Phi_{\tau,1}^{n+1})-G(1-\Phi_N^{n}-\tau^2\Phi_{\tau,1}^{n})}{\Phi_N^{n+1}+\tau^2\Phi_{\tau,1}^{n+1}-\Phi_N^{n}-\tau^2\Phi_{\tau,1}^{n}}-\frac{G(1-\Phi_N^{n+1})-G(1-\Phi_N^{n})}{\Phi_N^{n+1}-\Phi_N^{n}}\right)\nonumber\\
&\qquad\quad-\theta_0\ext{\Phi}_{\tau,1}-\epsilon^2\Delta\impp{\Phi}_{\tau,1}+\tau\left( \mathcal{N}'(\Phi_N^{n+1})\Phi_{\tau,1}^{n+1}-\mathcal{N}'(\Phi_N^{n})\Phi_{\tau,1}^{n}  \right)  , 
  	\label{trun1-mu} \\
 &\frac{\bU_{\tau, 1}^{n+1}-\hat{\bU}_{\tau, 1}^{n+1}}{\tau}+ \frac12 \nabla ( 
	P_{\tau, 1}^{n+1} - P_{\tau, 1}^n )=0 , \label{trun 1-pressure-correction}\\ 
&\nabla\cdot\bU_{\tau,1}^{n+1}=0.	\label{trun1-incompressible}
\end{align}
It is noticed that a nonlinear Taylor expansion has been applied in the derivation of~\eqref{trun1-mu}. A combination of \eqref{trun-NS}-\eqref{trun-incompressible} and \eqref{trun1-NS}-\eqref{trun1-incompressible} leads to the following third order truncation error for $\breve{\textbf{U}}_1:=\bU+\tau^2 \bU_{\tau,1}$, $\breve{\Phi}_1:=\Phi_N+\tau^2\mathcal{P}_N\Phi_{\tau,1}$, 
$\breve{P}_{1}:=P+\tau^2 P_{\tau,1}$:
\begin{align}
&\frac{\breve{\hat{\bU}}_1^{n+1}-\breve{\bU}_1^n}{\tau}+b(\ext{\breve{\bU}}_1,\imp{\breve{\hat{\bU}}}_1)+\nabla \breve{P}_1^n -\nu\Delta\imp{\breve{\hat{\bU}}}_1 =-\gamma\ext{\breve{\Phi}}_1\nabla \breve{M}_1^{n+\frac12} \nonumber\\
&\qquad\qquad\qquad\qquad\qquad\qquad\qquad\qquad\qquad\qquad\qquad\qquad\quad+\tau^3 \mathbf{G}_1^{n+\frac{1}{2}}+O(\tau^4+h^{m_0}), \label{trun2-NS}\\
&\frac{\breve{\Phi}_1^{n+1}-\breve{\Phi}_1^n}{\tau}+\nabla\cdot\left(\ext{\breve{\Phi}}_1 \imp{\breve{\hat{\bU}}}_1\right)=\Delta \breve{M}_1^{n+\frac12} +\tau^3 H_1^{n+\frac{1}{2}}+O(\tau^4+h^{m_0}),	\label{trun2-CH}\\
& \breve{M}_1^{n+\frac12} =\frac{G(1+\breve{\Phi}_1^{n+1})-G(1+\breve{\Phi}_1^{n})}{\breve{\Phi}_1^{n+1}-\breve{\Phi}_1^{n}}+\frac{G(1-\breve{\Phi}_1^{n+1})-G(1-\breve{\Phi}_1^{n})}{\breve{\Phi}_1^{n+1}-\breve{\Phi}_1^{n}}-\theta_0\ext{\breve{\Phi}}_1 
 -\epsilon^2\Delta\impp{\breve{\Phi}}_1\notag\\
&\qquad\qquad+\tau\left( \mathcal{N}(\breve{\Phi}_1^{n+1})-\mathcal{N}(\breve{\Phi}_1^n)     \right) , 
\label{trun2-mu}\\
&\frac{\breve{\bU}_1^{n+1}-\breve{\hat{\bU}}_1^{n+1}}{\tau}+ \frac12 \nabla ( 
	\breve{P}_1^{n+1} - \breve{P}_1^n )=0 , \label{trun2-pressure-correction}\\ 
&\nabla\cdot\breve{\bU}_1^{n+1}=0,	\label{trun2-incompressible}
\end{align}
where $\|\mathbf{G}_1\|\le C$, $\|H_1\|\le C$, 
and $C$ depends on the regularity of the exact solutions. The following linearized expansions have been used in \eqref{trun2-mu}:
\begin{align}
&	\mathcal{N}(\breve{\Phi}_1)=\mathcal{N}(\Phi_N+\tau^2\mathcal{P}_N\Phi_{\tau,1})=\mathcal{N}(\Phi_N)+\mathcal{N}'(\Phi_N)\tau^2\mathcal{P}_N\Phi_{\tau,1}+O(\tau^4).\label{linearied expan2}
\end{align}

Now, we construct the spatial correction function to upgrade the spatial accuracy order.  In terms of the spatial discretization, it is observed that the velocity profile $\breve{\bU}_1$ is not divergence-free at a discrete level, so that its discrete inner product with the pressure gradient may not vanish. To overcome the difficulty, a spatial interpolation operator is needed to ensure the exact divergence-free property of the constructed velocity vector at a discrete level. Such an operator in the finite difference discretization is highly non-standard, due to the collocation point structure, and this effort has not been reported in the existing textbook literature. A pioneering idea of this approach was proposed in~\cite{E1996a}, and other related analysis works have been reported in~\cite{chen16,  cheng2023a, STWW03}, etc.   

In more details, the spatial interpolation operator $\bm{\mathcal{P}}_H$ is defined as follows, for any $\bu \in H^1(\Omega)$, $\nabla\cdot \bu=0$: There is an exact stream function $\psi$ so that $\bu=\nabla^{\perp} \psi$, and we define 
\begin{equation}
\bm{\mathcal{P}}_H (\bu)=\nabla_h^{\perp} \psi =( - D_y \psi , D_x \psi )^T . 
\end{equation}
Of course, this definition ensures $\nabla_h \cdot \bm{\mathcal{P}}_H (\bu)=0$ at a point-wise level. Furthermore, an $O (h^2)$ truncation error is available between the continuous velocity vector $\bu$ and its Helmholtz interpolation, $\bm{\mathcal{P}}_H (\bu)$. 

In turn, we denote $\breve{\bU}_{1, PH} = \bm{\mathcal{P}}_H (\breve{\bU}_1)$. An application of the spatial discretization yields the following truncation error estimate, with the help of straightforward Taylor expansion:
\begin{align}
&\frac{\breve{\hat{\bU}}_{1, PH}^{n+1}-\breve{\bU}_{1, PH}^n}{\tau}+b_h (\ext{\breve{\bU}}_{1, PH},\imp{\breve{\hat{\bU}}}_{1, PH})+\nabh \breve{P}_1^n -\nu\Dh\imp{\breve{\hat{\bU}}}_{1, PH} =-\gamma\mAh\ext{\breve{\Phi}}_1 \nabh \breve{\text{M}}_{1, h}^{n+\frac12} \nonumber\\
&\qquad\qquad\qquad\qquad\qquad\qquad\qquad\qquad\qquad\qquad\qquad\qquad\quad+h^2 \mathbf{G}_h^{n+\frac{1}{2}}+O(\tau^3+h^4), \label{trun4-NS}\\
&\frac{\breve{\Phi}_1^{n+1}-\breve{\Phi}_1^n}{\tau}+\nabh\cdot\left(\mAh\ext{\breve{\Phi}}_1 \imp{\breve{\hat{\bU}}}_{1, PH} \right)=\Dh \breve{\text{M}}_{1, h}^{n+\frac12} +h^2 H_h^{n+\frac{1}{2}}+O(\tau^3+h^4),	\label{trun4-CH}\\
& \breve{\text{M}}_{1, h}^{n+\frac12} =\frac{G(1+\breve{\Phi}_1^{n+1})-G(1+\breve{\Phi}_1^{n})}{\breve{\Phi}_1^{n+1}-\breve{\Phi}_1^{n}}+\frac{G(1-\breve{\Phi}_1^{n+1})-G(1-\breve{\Phi}_1^{n})}{\breve{\Phi}_1^{n+1}-\breve{\Phi}_1^{n}}-\theta_0 \ext{\breve{\Phi}}_1-\epsilon^2\Delta_h \impp{\breve{\Phi}}_1 \notag\\
&\qquad\qquad+\tau\left(  \mathcal{N}(\breve{\Phi}_1^{n+1})-\mathcal{N}(\breve{\Phi}_1^n)     \right) , 
\label{trun4-mu}\\
&\frac{\breve{\bU}_{1, PH}^{n+1}-\breve{\hat{\bU}}_{1, PH}^{n+1}}{\tau}+ \frac12 \nabla_h ( 
	\breve{P}_1^{n+1} - \breve{P}_1^n )=0 ,  \label{trun4-pressure-correction}\\ 
&\nabh\cdot\hat{\bU}_{1, PH}^{n+1}=0,	\label{trun4-incompressible}
\end{align}
where $\|\mathbf{G}_h\|_2$, $\|H_h\|_2 \le C$, 
dependent only on the regularity of the exact solution. Meanwhile, it is noticed that there is no $O (h^3)$ truncation error term, which comes from the fact that the centered difference spatial approximation gives local truncation errors with only even order terms, $O (h^2)$, $O (h^4)$, etc., because of the Taylor expansion symmetry over a uniform mesh. This fact will greatly simplify the construction of the higher order consistency analysis in the spatial discretization.  

The spatial correction functions $\bU_{h,1}$, $\Phi_{h,1}$ 
and  $P_{h,1}$ are determined by the following system 
\begin{align*}
&	\partial_t\bU_{h,1}+(\bU_{h,1}\cdot\nabla)\breve{\bU}_1+(\breve{\bU}_1\cdot\nabla)\bU_{h,1}+\nabla P_{h,1}-\nu\Delta \bU_{h,1}=-\gamma\Phi_{h,1}\nabla \breve{\text{M}}_1 -\gamma\breve{\Phi}_1 \nabla \text{M}_{h,1}-\mathbf{G}_h,\\
&\partial_t\Phi_{h,1}+\nabla\cdot(\Phi_{h,1}\breve{\bU}_1+\breve{\Phi}_1 \bU_{h,1})=\Delta\text{M}_{h,1}-H_h,\\
&\text{M}_{h,1}=\frac{\Phi_{h,1}}{1+\breve{\Phi}_1}+\frac{\Phi_{h,1}}{1-\breve{\Phi}_1}-\theta_0\Phi_{h,1}-\epsilon^2\Delta\Phi_{h,1} , \\ 
&\nabla\cdot \bU_{h,1}=0.	
\end{align*}
Again, the homogeneous Neumann boundary condition is imposed for $\Phi_{h, 1}$ and $\text{M}_{h ,1}$, no-penetration, free-slip boundary condition for $\bU_{h ,1}$. 
Subsequently, we denote $\bU_{h,1, PH} = \bm{\mathcal{P}}_H (\bU_{h,1})$, and $\hat{\bU}_{h,1, PH}^{n+1} = \bU_{h,1, PH}^{n+1} + \frac12 \tau \nabla_h ( P_{h,1}^{n+1} - P_{h,1}^n )$. An application of both the temporal and spatial approximations to the above system reveals that 
\begin{align}
&\frac{\hat{\bU}_{h,1, PH}^{n+1}-\bU_{h,1, PH}^n}{\tau}+b_h (\ext{\bU}_{h,1, PH},\imp{\hat{\bU}}_{1, PH})+ b_h (\ext{\breve{\bU}}_{1, PH},\imp{\bU}_{h,1, PH})+\nabh P_{h,1}^n -\nu\Dh\imp{\hat{\bU}}_{h,1, PH} \nonumber\\
&\qquad\qquad\qquad=-\gamma\mAh\ext{\Phi}_{h,1}\nabh \breve{\text{M}}_{1, h}^{n+\frac12} -\gamma\mAh\ext{\breve{\Phi}}_1 \nabh\Muhalf_{h,1}-\mathbf{G}_h^{n+\frac{1}{2}}+O(\tau^2+h^2), \label{trun5-NS}\\
&\frac{\Phi_{h,1}^{n+1}-\Phi_{h,1}^n}{\tau}+\nabh\cdot\left(\mAh\ext{\Phi}_{h,1}\imp{\hat{\bU}}_{1, PH} +\mAh\ext{\breve{\Phi}}_1 \imp{\bU}_{h,1, PH}\right)=\Dh\Muhalf_{h,1}- H_h^{n+\frac{1}{2}}+O(\tau^2+h^2),	\label{trun5-CH}\\
&\Muhalf_{h,1}=\frac{1}{h^2}\left(\frac{G(1+\breve{\Phi}_1^{n+1}+h^2\Phi_{h,1}^{n+1})-G(1+\breve{\Phi}_1^{n}+h^2\Phi_{h,1}^{n})}{\breve{\Phi}_1^{n+1}+h^2 \Phi_{h,1}^{n+1}-\breve{\Phi}_1^{n}-h^2\Phi_{h,1}^{n}}-\frac{G(1+\breve{\Phi}_1^{n+1})-G(1+\breve{\Phi}_1^{n})}{\breve{\Phi}_1^{n+1}-\breve{\Phi}_1^{n}}\right)\nonumber\\
&\qquad\quad+\frac{1}{h^2}\left(\frac{G(1-\breve{\Phi}_1^{n+1}-h^2\Phi_{h,1}^{n+1})-G(1-\breve{\Phi}_1^{n}-h^2\Phi_{h,1}^{n})}{\breve{\Phi}_1^{n+1}+h^2\Phi_{h,1}^{n+1}-\breve{\Phi}_1^{n}-h^2\Phi_{h,1}^{n}}-\frac{G(1-\breve{\Phi}_1^{n+1})-G(1-\breve{\Phi}_1^{n})}{\breve{\Phi}_1^{n+1}-\breve{\Phi}_1^{n}}\right)\nonumber\\
&\qquad\quad-\theta_0\ext{\Phi}_{h,1}-\epsilon^2\Dh\impp{\Phi}_{h,1}+\tau\left(\mathcal{N}'(\breve{\Phi}_1^{n+1})\Phi_{h,1}^{n+1}-\mathcal{N}'(\breve{\Phi}_1^{n})\Phi_{h,1}^{n}    \right) , 
\label{trun5-mu}\\
&\frac{\bU_{h,1, PH}^{n+1}- \hat{\bU}_{h,1, PH}^{n+1}}{\tau}+ \frac12 \nabla_h ( 
	P_{h,1}^{n+1} - P_{h,1}^n )=0 ,  \label{trun5-pressure-correction}\\ 
&\nabh\cdot\bU_{h,1, PH}^{n+1}=0.	\label{trun5-incompressible}
\end{align}
A combination of \eqref{trun4-NS}-\eqref{trun4-incompressible} and \eqref{trun5-NS}-\eqref{trun5-incompressible} leads to the following $O(\tau^3+h^4)$ truncation error for $\breve{\textbf{U}}$, $\breve{\Phi}$ 
and $\breve{P}$:
\begin{align}
&\frac{\hat{\breve{\bU}}^{n+1}-\breve{\bU}^n}{\tau}+b_h (\ext{\breve{\bU}},\imp{\hat{\breve{\bU}}})+\nabh \breve{P}^n -\nu\Dh\imp{\hat{\breve{\bU}}} =-\gamma\mAh\ext{\breve{\Phi}}\nabh \breve{\text{M}}^{n+\frac12} +\bm{\zeta}_1^{n}, \label{trun6-NS}\\
&\frac{\breve{\Phi}^{n+1}-\breve{\Phi}^n}{\tau}+\nabh\cdot\left(\mAh\ext{\breve{\Phi}}\imp{\hat{\breve{\bU}}}\right)=\Dh \breve{\text{M}}^{n+\frac12} +\zeta_2^{n},	\label{trun6-CH}\\
&\breve{\mbox{M}}^{n+\frac{1}{2}}=\frac{G(1+\breve{\Phi}^{n+1})-G(1+\breve{\Phi}^{n})}{\breve{\Phi}^{n+1}-\breve{\Phi}^{n}}+\frac{G(1-\breve{\Phi}^{n+1})-G(1-\breve{\Phi}^{n})}{\breve{\Phi}^{n+1}-\breve{\Phi}^{n}}-\theta_0\ext{\breve{\Phi}}-\epsilon^2\Dh\impp{\breve{\Phi}}\notag\\
&\qquad\qquad+\tau\left( \mathcal{N}(\breve{\Phi}^{n+1})-\mathcal{N}(\breve{\Phi}^n)  \right) , 
\label{trun6-mu}\\
&\frac{\breve{\bU}^{n+1}- \hat{\breve{\bU}}^{n+1}}{\tau}+ \frac12 \nabla_h ( 
	\breve{P}^{n+1} - \breve{P}^n )=0 ,  \label{trun6-pressure-correction}\\ 
&\nabh\cdot\breve{\bU}^{n+1}=0,	\label{trun6-incompressible}
\end{align}
where similar linearized expansions as in 
\eqref{linearied expan2} have been utilized in \eqref{trun6-mu}, and the truncation errors $\|\bm{\zeta}_1^{n} \|_2=O(\tau^3+h^4+\tau^2h^2)$, $\zeta_2^{n}:=O(\tau^3+h^4+\tau^2h^2)$. Notice that $\tau^2h^2\le \frac{1}{2}\tau^4+\frac{1}{2}h^4$, then we obtain
\begin{align}
	\|\bm{\zeta}_1^{n}\|_2 \le C(\tau^3+h^4),\quad \|\zeta_2^{n}\|_2 \le C(\tau^3+h^4) .
\end{align} 

\begin{rem}
	For the sake of the later analysis, we could set the initial value as $\Phi_{\tau,1}(\cdot,t=0)\equiv0$ and $\Phi_{h,1}(\cdot,t=0)\equiv0$, respectively. Then we obtain $\frac{\mathrm{d}}{\mathrm{d}t}(\Phi_{\tau,1},1)=0$ and $\frac{\mathrm{d}}{\mathrm{d}t}(\Phi_{h, 1},1)=0$, so that $(\Phi_{\tau, 1} (t),1)=(\Phi_{\tau, 1} (0),1)=0$, $(\Phi_{h, 1} (t),1)=(\Phi_{h, 1} (0),1)=0$. Moreover, by the construction formula~\eqref{construction-1}, it is clear that $\overline{\breve{\Phi}^n}=\overline{\Phi_N^n}$, due to the fact that $\overline{{\mathcal P}_N (f)} = (f , 1)$. This in turn implies the mass conservative property for $\breve{\Phi}$: 
\begin{align}
&
\phi^0 \equiv  \breve{\Phi}^0 ,  \quad \overline{\phi^n} = \overline{\phi^0} ,  \quad \forall \, n \ge 0,
\label{mass conserv-3-1}
\\
 &
\overline{\breve{\Phi}^n} = \overline{\Phi_N^n} = \int_\Omega \, \Phi_N^n \, d {\bf x}  
= \int_\Omega \, \Phi_N^0 \, d {\bf x} = \overline{\phi^0} = \overline{\phi^n} ,  \quad \forall \, n \ge 0,  
	\label{mass conserv-3-2}
\end{align}
where the second step is based on the fact that $\Phi_N \in {\cal B}^K$, and the third step comes from the mass conservative property of $\Phi_N$ at the continuous level. These two properties will be used in the later analysis. In addition, since $\breve{\Phi}$ is mass conservative at a discrete level, we observe that the local truncation error $\tau_\phi$ has a similar property: 
\begin{equation} 
  \overline{\zeta_2^{n}} = 0 ,  \quad \forall \, n \ge 0 . \label{mass conserv-3-3}
\end{equation} 
\end{rem}

\begin{rem}
  We recall the phase separation inequality~\eqref{assumption:separation} for the exact solution. Furthermore, by choosing sufficiently small $\tau$ and $h$ satisfying $h\le\tau_0,\ \tau\le \tau_0$, the following separation property becomes available for the constructed profile $\breve{\Phi}$:
	\begin{align}  \label{separation-1} 
		-1+\frac{3}{2}\delta\le \breve{\Phi} \le 1-\frac{3}{2}\delta.
	\end{align} 
	
Since the correction function is only based on the exact solution $(\Phi, \bU, P)$ with enough regularity, its discrete $W_h^{1,\infty}$ norm will stay bounded, for any $k , n \ge 0$: 
\begin{equation} 
  \| \breve{\Phi}^k \|_\infty \leq C^\star , \, \, \, 
  \| {\breve{\bU}}^k \|_\infty \leq C^\star   , \, \, \, 
  \| \nabla_h \breve{\Phi}^k \|_\infty \leq C^\star , \, \, \, 
  \| \nabla_h \breve{\bU}^k \|_\infty \leq C^\star  , \, \, \, 
  \| \imp{\hat{\breve{\bU}}} \|_\infty \le C^* .   
    \label{assumption:W1-infty bound}
\end{equation}  
In addition, it is clear that $\breve{\text{M}}^{n+\frac12}$ only depends on the exact solution $\Phi$ and the associated correction functions, we assume a discrete $W_h^{1,\infty}$ bound
\begin{equation}
\| \nabla_h \breve{{M}}^{n+\frac12}\|_\infty \leq C^*.  \label{assumption:W1-infty bound mu}
\end{equation}

\end{rem}

\subsection{A rough error estimate}

The following error functions are defined: 
\begin{equation} 
	\begin{aligned}
		&\eu^n = \breve{\bU}^n-\bu^n,\, \, \, \, &&\beu=\imp{\breve{\bU}}-\imp{\bu}, \, \, \, &&\teu=\ext{\breve{\bU}}-\ext{\bu}, \\  
		&\ehu^{n+1} = \hat{\breve{\bU}}^{n+1} - \hat{\bu}^{n+1} , && 
		\behu = \frac12 (\eu^n + \ehu^{n+1} ) , \, \, \, 
		&& \\
		&e_p^n = \breve{P}^n-p^n,\, \, \,  &&\bep=\imp{\breve{P}}-\imp{p},\, \, \, &&\emu= \breve{\text{M}}^{n+\frac12} -\muhalf,\\
		&\ephi^n= \breve{\Phi}^n-\phi^n,\, \, \, &&\bbephi=\impp{\breve{\Phi}}-\impp{\phi},\, \, \, && \tephi=\ext{\breve{\Phi}}-\ext{\phi}.
	\end{aligned} 
	\label{error function-3} 
\end{equation} 
Notice that these error functions, $(e_\phi^n, \eu^n, e_p^n)$, are different from the earlier introduced ones, $( \tilde{\phi}^n, \tilde{\bu}^n, \tilde{p}^n)$ (given by~\eqref{error function-1}, \eqref{error function-2}). In fact, the latter ones measure a direct difference between the numerical and exact (projection) solutions, while the ones in \eqref{error function-3} correspond to the difference between the numerical solution and the constructed profiles. Due to the higher order consistency analysis for $(\breve{\Phi}, \breve{\bU}, \breve{P})$, an $O (\tau^3 + h^4)$ error estimate would be available for the ones defined in \eqref{error function-3}, while the original numerical error functions preserve an $O (\tau^2 + h^2)$ accuracy order, as indicated by Theorem~\ref{thm: convergence}. Based on the mass conservative identity~\eqref{mass conserv-3-2}, we see that the error function $\ephi^k$ is always mean free: $\overline{\ephi^k} =0$, for any $k \ge 0$, so that its $\| \cdot \|_{-1, h}$ norm is well defined. 


\begin{lm}\label{B-est}  A trilinear form is introduced as $\B{\bu}{\bv}{\textbf{w}}=\eipvec{b_h \left(\bu,\bv\right)}{\textbf{w}}_1$. The following estimates are valid: 
	\begin{align} 
	     & 
	     \B{\bu}{\bv}{\bv} = 0 ,  \label{B est-0-1} 
\\
	      & 
		|\B{\bu}{\bv}{\textbf{w}}| \leq \frac12 \nrm{\bu}_2 \left(\nrm{\nabh \bv}_\infty \cdot \| \textbf{w} \|_2 +\nrm{\nabh \textbf{w}}_2 \cdot \nrminf{\bv} \right) . \label{B est-0-2} 
	\end{align}
\end{lm}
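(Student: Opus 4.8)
The plan is to handle the two assertions separately, reducing each to the summation-by-parts machinery collected in \eqref{summation-1}--\eqref{summation-5} together with the explicit collocation formulas \eqref{FD-u-1}--\eqref{FD-ave-3}. Throughout, I would exploit the symmetry of the vector inner product $\eipvec{\cdot}{\cdot}_1$ and the fact that the averaging operators ${\cal A}_{xy}$ are both $\ell^\infty$-contractions and $\ell^2$-stable, i.e. $\nrminf{{\cal A}_{xy}g}\le\nrminf{g}$ and $\nrm{{\cal A}_{xy}g}_2\le\nrm{g}_2$, being pointwise convex combinations.

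For the skew-symmetry \eqref{B est-0-1}, I would expand $\B{\bu}{\bv}{\bv}=\eipvec{b_h(\bu,\bv)}{\bv}_1$ through the definition \eqref{b}, so that $2\B{\bu}{\bv}{\bv}=\eipvec{\bv}{\bu\cdot\nabh\bv}_1+\eipvec{\bv}{\nabh\cdot(\bv\bu^T)}_1$. The right-hand side is exactly the left-hand side of the summation-by-parts identity \eqref{summation-1}, which vanishes; hence $\B{\bu}{\bv}{\bv}=0$. The only subtlety is that the velocity error functions carry the discrete no-penetration and free-slip conditions rather than homogeneous Dirichlet data, so I would verify that the boundary contributions in the derivation of \eqref{summation-1} still cancel: this follows from the same edge-by-edge telescoping, since the vanishing of the normal component together with the free-slip reflection of the tangential component are precisely what annihilate the boundary remainders.

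For the bound \eqref{B est-0-2}, I would split $b_h$ into its convective and conservative halves and estimate each. For the convective part $\tfrac12\eipvec{\bu\cdot\nabh\bv}{\textbf{w}}_1$, Cauchy--Schwarz gives a factor $\nrm{\bu\cdot\nabh\bv}_2\,\nrm{\textbf{w}}_2$; writing out $\bu\cdot\nabh\bv$ via \eqref{FD-u-1}, bounding every long-stencil difference by $\nrm{\nabh\bv}_\infty$ pointwise, and absorbing the averaging operators by their $\ell^2$-stability yields $\nrm{\bu\cdot\nabh\bv}_2\le\nrm{\nabh\bv}_\infty\nrm{\bu}_2$, hence the contribution $\tfrac12\nrm{\bu}_2\nrm{\nabh\bv}_\infty\nrm{\textbf{w}}_2$. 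For the conservative part $\tfrac12\eipvec{\nabh\cdot(\bv\bu^T)}{\textbf{w}}_1$, I would apply a discrete summation by parts (the vector analogue of \eqref{summation-5}) to transfer the long-stencil operators $\tilde D_x,\tilde D_y$ from the product $\bv\bu^T$ onto $\textbf{w}$, producing a Frobenius pairing of $\bv\bu^T$ with $\nabh\textbf{w}$; factoring $\nrminf{\bv}$ out of that pairing and applying Cauchy--Schwarz then gives $\tfrac12\nrm{\bu}_2\nrminf{\bv}\nrm{\nabh\textbf{w}}_2$. Summing the two contributions reproduces exactly \eqref{B est-0-2}.

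The main obstacle I anticipate lies in the bookkeeping of the staggered MAC stencil in the conservative term: the summation by parts must carry the centered long-stencil operators across the averaging operators ${\cal A}_{xy}$ without generating uncontrolled boundary remainders, and one must check that the $\ell^\infty$ factor can genuinely be placed on $\bv$ rather than on its average, which is where the $\ell^\infty$-contraction property of ${\cal A}_{xy}$ is essential. Once this transfer is justified cleanly on the edge-centered and node-centered collocation points, the remaining manipulations are routine Cauchy--Schwarz estimates.
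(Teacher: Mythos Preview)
Your proposal is correct and follows essentially the same route as the paper: identity \eqref{B est-0-1} comes directly from \eqref{summation-1}, and for \eqref{B est-0-2} both you and the paper split $b_h$ into its two halves, bound the convective half by a pointwise H\"older estimate, and treat the conservative half via summation by parts---the paper records the transferred term as $-\langle\bu\cdot\nabla_h\textbf{w},\bv\rangle_1$, which is exactly your Frobenius pairing written in convective form. Your extra attention to the no-penetration/free-slip boundary data and to the $\ell^2$/$\ell^\infty$ stability of the averaging operators is not spelled out in the paper's proof but is indeed the bookkeeping that makes the MAC-grid argument honest.
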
 

\begin{proof} 
Identity~\eqref{B est-0-1} comes from the summation by parts formula 
\begin{equation*} 
   \B{\bu}{\bv}{\bv}  
   = \eipvec{b_h \left(\bu,\bv\right)}{\bv}_1 = \frac12 ( \langle \bu\cdot \nabh \bv , \bv \rangle_1 
   + \langle \nabh\cdot (\bu\bv^{T} ) , \bv \rangle_1 ) = 0 . 
\end{equation*} 
Inequality~\eqref{B est-0-2} could be derived as follows:   
\begin{equation*} 
\begin{aligned}
  | \B{\bu}{\bv}{\textbf{w}}| = & \frac12 | \langle \bu\cdot \nabh \bv ,  \textbf{w} \rangle_1 
   + \langle \nabh\cdot (\bu\bv^{T} ) , \textbf{w} \rangle_1 ) | 
   = \frac12 | \langle \bu\cdot \nabh \bv ,  \textbf{w} \rangle_1 
   - \langle \bu \cdot \nabla_h \textbf{w} , \bv \rangle_1 ) | 
\\
  \le & 
  \frac12 ( | \langle \bu\cdot \nabh \bv ,  \textbf{w} \rangle_1 | 
   + | \langle \bu \cdot \nabla_h \textbf{w} , \bv \rangle_1 ) | ) 
   \le \frac12 \nrm{\bu}_2 \left(\nrm{\nabh \bv}_\infty \cdot \| \textbf{w} \|_2 
  +\nrm{\nabh \textbf{w}}_2 \cdot \nrminf{\bv} \right) . 
\end{aligned} 
\end{equation*} 
\end{proof} 

In addition, for the nonlinear error terms associated with the singular logarithmic terms, the following preliminary estimates will be needed in the convergence analysis; the detailed proofs will be given in Appendix. 

\begin{prop} \label{prop: nonlinear est-1} 
Given $\Psi$ and $\psi$ and denote $\tilde{\psi} = \Psi - \psi$. Assume that both $\Psi$ and $\psi$ preserve a separation property: 
\begin{equation} 
  -1 + \delta \le \Psi , \, \psi \le 1 - \delta  , \label{separation-2} 
\end{equation} 
then the following nonlinear error estimates are available: 
\begin{align} 
  \| \ln ( 1 \pm \Psi) - \ln ( 1 \pm \psi ) \|_2 \le C_\delta \| \tilde{\psi} \|_2 , \quad 
  \| {\mathcal N} ( \Psi ) - {\mathcal N} ( \psi ) \|_2 \le C_\delta \| \tilde{\psi} \|_2 . 
  \label{nonlinear est-0-1} 
\end{align} 
Moreover, under a few additional conditions 
\begin{equation} 
  \| \nabla_h \Psi \|_\infty \le C^*, \, \, \, \| \nabla_h \psi \|_\infty \le \tilde{C}_1 := C^* + 1 , \, \, \, 
  \| \tilde{\psi} \|_\infty \le \tau + h , 
  \label{nonlinear est-condition-1} 
\end{equation} 
we have the further estimates 
\begin{equation} 
\begin{aligned} 
  & 
  \| \nabla_h ( \ln ( 1 \pm \Psi) - \ln ( 1 \pm \psi ) ) \|_2 
  \le C_\delta ( \| \tilde{\psi} \|_2  +  \| \nabla_h \tilde{\psi} \|_2 ) , 
\\
  & 
  \| \nabla_h ( {\mathcal N} ( \Psi ) - {\mathcal N} ( \psi ) ) \|_2 
  \le C_\delta ( \| \tilde{\psi} \|_2  +  \| \nabla_h \tilde{\psi} \|_2 ) ,  
\end{aligned} 
  \label{nonlinear est-0-2} 
\end{equation} 
in which $C_\delta$ only depends on $\delta$ and $C^*$, independent on $\tau$ and $h$. 
\end{prop}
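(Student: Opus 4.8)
The plan is to reduce both families of estimates to elementary calculus facts about the logarithm, exploiting that the separation property~\eqref{separation-2} keeps every relevant argument a fixed distance $\delta$ away from the singular values $\pm 1$. For the zeroth-order bounds~\eqref{nonlinear est-0-1} I would argue pointwise. Let $g$ denote either $x \mapsto \ln(1+x)$ or $x \mapsto \ln(1-x)$; on the interval $[-1+\delta, 1-\delta]$ one has $|g'| = \frac{1}{1\pm x} \le \delta^{-1}$, so $g$ is Lipschitz with constant $\delta^{-1}$. Since every convex combination of $\Psi_{r,s}$ and $\psi_{r,s}$ again lies in $[-1+\delta,1-\delta]$, the mean value theorem gives $|g(\Psi_{r,s}) - g(\psi_{r,s})| \le \delta^{-1}|\tilde{\psi}_{r,s}|$ at each grid point; squaring, summing and taking the square root yields the claimed $\ell^2$ bound with $C_\delta = \delta^{-1}$. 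The same reasoning applied to $\mathcal{N}(\phi) = \ln(1+\phi) - \ln(1-\phi)$, whose derivative $\frac{1}{1+\phi} + \frac{1}{1-\phi}$ is bounded by $2\delta^{-1}$ on the separation interval, delivers the $\mathcal{N}$ inequality in~\eqref{nonlinear est-0-1}.

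The gradient estimates~\eqref{nonlinear est-0-2} are the genuinely delicate part, because the discrete difference of a composite function mixes values at neighboring grid points and cannot be handled by a single pointwise chain rule. My strategy is to write the nonlinear difference as a product and invoke the discrete Leibniz rule. Introduce the divided-difference quotient
$Q_{r,s} := \frac{g(\Psi_{r,s}) - g(\psi_{r,s})}{\Psi_{r,s} - \psi_{r,s}} = \int_0^1 g'\bigl(\psi_{r,s} + t\tilde{\psi}_{r,s}\bigr)\,dt$,
so that pointwise $g(\Psi) - g(\psi) = \tilde{\psi}\cdot Q$; note that $Q$ is a genuinely smooth function of the pair $(\Psi,\psi)$ with no $0/0$ issue, since the integral representation is well defined even where $\Psi = \psi$. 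Applying the discrete product identity $(D_x^c(fg))_{i,j+\hf} = f_{i+\hf,j+\hf}(D_x^c g)_{i,j+\hf} + (D_x^c f)_{i,j+\hf}\, g_{i-\hf,j+\hf}$ with $f = \tilde{\psi}$, $g = Q$ (and the analogous identity in $y$) gives $\| \nabla_h(g(\Psi) - g(\psi)) \|_2 \le \| Q \|_\infty \| \nabla_h \tilde{\psi} \|_2 + \| \nabla_h Q \|_\infty \| \tilde{\psi} \|_2$.

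It then remains to bound $\| Q \|_\infty$ and $\| \nabla_h Q \|_\infty$ by a constant depending only on $\delta$ and $C^*$. The first is immediate from $|g'| \le \delta^{-1}$, so $\| Q \|_\infty \le \delta^{-1}$. For the second, I would regard $Q = H(\Psi,\psi)$ as a smooth bivariate function and apply the two-variable mean value theorem across each pair of neighboring grid points, obtaining $(D_x^c Q)_{i,j+\hf} = \partial_x H(\cdots)(D_x^c \Psi)_{i,j+\hf} + \partial_y H(\cdots)(D_x^c \psi)_{i,j+\hf}$ for intermediate arguments still lying in the separation region. Since $\partial_x H = \int_0^1 t\, g''(\cdots)\,dt$ and $\partial_y H = \int_0^1 (1-t)\, g''(\cdots)\,dt$ are each bounded by $\tfrac12\|g''\|_\infty \le \tfrac12\delta^{-2}$, and the discrete gradients satisfy $\| \nabla_h \Psi \|_\infty \le C^*$, $\| \nabla_h \psi \|_\infty \le C^*+1$ by hypothesis~\eqref{nonlinear est-condition-1}, I obtain $\| \nabla_h Q \|_\infty \le \delta^{-2}(C^* + \tfrac12)$. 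Combining these closes the logarithmic gradient estimate, and the identical argument applied to $\mathcal{N}$ (whose first and second derivatives stay bounded on $[-1+\delta,1-\delta]$) yields the $\mathcal{N}$ estimate in~\eqref{nonlinear est-0-2}.

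The main obstacle I anticipate is precisely the bound on $\| \nabla_h Q \|_\infty$: one must resist treating the discrete gradient of a nonlinear composition as if it were continuous, and instead use the product/divided-difference structure so that all of the nonlinearity is absorbed into $g'$ and $g''$ evaluated at points the separation property keeps away from $\pm 1$. The gradient hypotheses on $\Psi$ and $\psi$ are exactly what converts the smoothness of $H$ into a uniform discrete gradient bound, while the bound $\| \tilde{\psi} \|_\infty \le \tau + h$ serves mainly to make the gradient assumption on $\psi$ compatible with that on the smooth profile $\Psi$; it is not otherwise essential to the inequalities themselves. A minor technical point, which I would dispatch using the reflection convention of Definition~\ref{defi: BC}, is that the discrete differences near $\partial\Omega$ involve ghost values; the homogeneous Neumann boundary condition makes the corresponding boundary contributions either vanish or reduce to interior values, so no boundary terms spoil the estimates.
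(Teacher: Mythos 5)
Your proof is correct, and for the gradient estimates it takes a genuinely different route from the paper's. The zeroth-order bounds \eqref{nonlinear est-0-1} are handled essentially as in the paper (pointwise mean value theorem plus the Lipschitz bound $|g'|\le\delta^{-1}$ on the separation interval), and both proofs then reduce \eqref{nonlinear est-0-2} to the factorization $g(\Psi)-g(\psi)=Q\,\tilde{\psi}$ together with the discrete product rule, so that everything hinges on uniform bounds for $\|Q\|_\infty$ and $\|\nabla_h Q\|_\infty$. The difference lies in how those two bounds are obtained. The paper expands the divided difference around the midpoint, $D(\Psi,\psi)=\bigl(1+\tfrac{\Psi+\psi}{2}\bigr)^{-1}+\tfrac{(\Psi-\psi)^2}{24}\bigl((1+\eta_1)^{-3}+(1+\eta_2)^{-3}\bigr)$, and must then control the discrete gradient of the quadratic remainder by an inverse inequality, producing a term of size $(\tau+h)^2\delta^{-3}/h$; this is precisely where the hypothesis $\|\tilde{\psi}\|_\infty\le\tau+h$ (and implicitly the mesh relation) is consumed, and it also costs the paper the weaker bound $\|D\|_\infty\le\delta^{-1}+1$. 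You instead use the integral representation $Q=\int_0^1 g'(\psi+t\tilde{\psi})\,dt$, which is a $C^1$ function of the pair $(\Psi,\psi)$ on the convex square $[-1+\delta,1-\delta]^2$, and a bivariate mean value theorem across neighboring grid points; this gives $\|Q\|_\infty\le\delta^{-1}$ and $\|\nabla_h Q\|_\infty\le\delta^{-2}(C^*+\tfrac12)$ with no inverse inequality and, as you correctly observe, without ever invoking $\|\tilde{\psi}\|_\infty\le\tau+h$. Your version is therefore slightly sharper and more general (one hypothesis of \eqref{nonlinear est-condition-1} becomes superfluous for this proposition, though it is still needed elsewhere in the convergence argument); the paper's version buys only a more elementary, purely one-dimensional Taylor computation. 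Your treatment of the boundary ghost values and of the index shifts in the discrete Leibniz identity matches what the paper implicitly assumes, so no gap arises there.
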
 

\begin{prop} \label{prop: nonlinear est-2} 
Assume that the numerical solution $\phi^n$ preserves the separation property, $-1 + \delta \le \phi^n \le 1 - \delta$, as in~\eqref{separation-2}, then the following nonlinear error estimates are valid: 
\begin{equation} 
\begin{aligned} 
  & 
   \langle \ephi^{n+1} , F_{1 + \breve{\Phi}^n}(1 + \breve{\Phi}^{n+1}) - F_{1 + \phi^n}(1+\phi^{n+1}) \rangle_c \ge - C_\delta \| \ephi^{n+1} \|_2 \cdot \| \ephi^n \|_2 , 
\\
  & 
  \langle \ephi^{n+1} , - F_{1 - \breve{\Phi}^n}(1 - \breve{\Phi}^{n+1}) + F_{1 - \phi^n}(1-\phi^{n+1}) \rangle_c \ge - C_\delta \| \ephi^{n+1} \|_2 \cdot \| \ephi^n \|_2 .  
\end{aligned} 
  \label{nonlinear est-1-1} 
\end{equation} 
Moreover, if both $\phi^n$ and $\phi^{n+1}$ preserve the separation property, $-1 + \delta \le \phi^k \le 1 - \delta$, $k=n, n+1$, and the following conditions are satisfied: 
\begin{equation} 
  \| \nabla_h \phi^k \|_\infty \le \tilde{C}_1 := C^* + 1 , \, \, \, 
  \| \ephi^k \|_\infty \le \tau + h ,  \quad k = n, n+1 , 
  \label{nonlinear est-condition-2} 
\end{equation} 
we have the further estimates 
\begin{equation} 
\begin{aligned} 
  & 
  \| \nabla_h ( F_{1 + \breve{\Phi}^n}(1 + \breve{\Phi}^{n+1}) 
   - F_{1 + \phi^n}(1+\phi^{n+1}) ) \|_2 
  \le C_\delta (   \| \nabla_h \ephi^n \|_2 
  +  \| \nabla_h \ephi^{n+1} \|_2 ) , 
\\
  & 
  \| \nabla_h ( F_{1 - \breve{\Phi}^n}(1 - \breve{\Phi}^{n+1}) 
   - F_{1 - \phi^n}(1-\phi^{n+1}) ) \|_2 
  \le C_\delta (  \| \nabla_h \ephi^n \|_2 
   +  \| \nabla_h \ephi^{n+1} \|_2 ) , 
\end{aligned} 
  \label{nonlinear est-1-2} 
\end{equation} 
in which $C_\delta$ only depends on $\delta$ and $C^*$, independent on $\tau$ and $h$. 
\end{prop}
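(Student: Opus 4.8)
The plan is to prove both parts by exploiting the divided-difference structure of $F_a$ together with the monotonicity properties recorded in Lemma~\ref{estimate of Fa}, while the separation properties~\eqref{separation-2} and~\eqref{separation-1} keep all logarithmic arguments uniformly bounded away from the origin. Throughout, $G(s) = s\ln s$, so $G'(s) = \ln s + 1$ and $G''(s) = 1/s > 0$.

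For the inner-product bounds~\eqref{nonlinear est-1-1}, I would write $a = 1 + \breve{\Phi}^n$, $\tilde a = 1 + \phi^n$, $x = 1 + \breve{\Phi}^{n+1}$, $\tilde x = 1 + \phi^{n+1}$, so that $a - \tilde a = \ephi^n$ and $x - \tilde x = \ephi^{n+1}$ pointwise, and split
\[
F_a(x) - F_{\tilde a}(\tilde x) = \underbrace{\left( F_a(x) - F_a(\tilde x) \right)}_{\text{(I)}} + \underbrace{\left( F_a(\tilde x) - F_{\tilde a}(\tilde x) \right)}_{\text{(II)}} .
\]
For (I), a mean value theorem in the second argument gives $F_a(x) - F_a(\tilde x) = F_a'(\xi)\,\ephi^{n+1}$ for some $\xi$ between $\tilde x$ and $x$; since $F_a' \ge 0$ by item~1 of Lemma~\ref{estimate of Fa}, the pairing $\eipC{\ephi^{n+1}}{F_a'(\xi)\ephi^{n+1}} = h^2\sum F_a'(\xi)\,(\ephi^{n+1})^2 \ge 0$ is nonnegative. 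For (II), only the parameter is varied from $\tilde a$ to $a$; using $F_a(\tilde x) = \int_0^1 G'(a + t(\tilde x - a))\, dt$ one finds $\partial_a F_a(\tilde x) = \int_0^1 G''(a + t(\tilde x - a))(1-t)\, dt$, whose argument stays $\ge \delta$ under the separation of $\breve{\Phi}^n$ and $\phi^{n+1}$, whence $0 \le \partial_a F_a(\tilde x) \le \tfrac{1}{2\delta}$. Thus $|(\text{II})| \le \tfrac{1}{2\delta}|\ephi^n|$ pointwise, and Cauchy--Schwarz gives $\eipC{\ephi^{n+1}}{(\text{II})} \ge -\tfrac{1}{2\delta}\nrm{\ephi^{n+1}}_2 \nrm{\ephi^n}_2$. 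Adding the two contributions yields the first line of~\eqref{nonlinear est-1-1} with $C_\delta = \tfrac{1}{2\delta}$; the $(1-\phi)$ estimate follows from the same splitting with $b = 1-\breve{\Phi}^n$, $y = 1-\breve{\Phi}^{n+1}$, etc., where $y - \tilde y = -\ephi^{n+1}$ makes the monotone term $-F_b'(\eta)(-\ephi^{n+1}) = F_b'(\eta)\ephi^{n+1}$ again nonnegative when paired with $\ephi^{n+1}$.

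For the gradient bounds~\eqref{nonlinear est-1-2}, I would treat $\Xi(p,q) := F_{1+p}(1+q) = \int_0^1 G'(1 + (1-t)p + tq)\, dt$ as a smooth function of two scalar arguments; the separation property forces $1 + (1-t)p + tq \ge \delta$, so all derivatives $\Xi_p, \Xi_q, \Xi_{pp}, \Xi_{pq}, \Xi_{qq}$ are bounded by a constant depending only on $\delta$. Writing $W := \Xi(\breve{\Phi}^n, \breve{\Phi}^{n+1}) - \Xi(\phi^n, \phi^{n+1})$ and integrating $\tfrac{d}{ds}$ along the segment joining $(\phi^n, \phi^{n+1})$ to $(\breve{\Phi}^n, \breve{\Phi}^{n+1})$ gives the pointwise identity $W = A\,\ephi^n + B\,\ephi^{n+1}$, where $A := \int_0^1 \Xi_p\, ds$ and $B := \int_0^1 \Xi_q\, ds$ (with arguments the convex combinations $(1-s)\phi^k + s\breve{\Phi}^k$) satisfy $\nrminf{A}, \nrminf{B} \le C_\delta$. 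A discrete product rule then expands $\nabh W$ into the main terms $A\,\nabh \ephi^n$, $B\,\nabh\ephi^{n+1}$ and the cross terms $(\nabh A)\,\ephi^n$, $(\nabh B)\,\ephi^{n+1}$, up to averaging operators that are bounded on $\ell^2$ and $\ell^\infty$.

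It remains to bound $\nabh A$ and $\nabh B$ in $\nrminf{\cdot}$, which I expect to be the main obstacle: $A$ and $B$ are nonlinear composites of four grid functions, and a discrete gradient does not factor through them, so one must estimate a finite difference such as $A_{i+1,j} - A_{i,j}$ directly via the Lipschitz property of $\Xi_p$ (controlled by the bounds on $\Xi_{pp}, \Xi_{pq}$) combined with the discrete gradients of the arguments. By the hypotheses~\eqref{nonlinear est-condition-2}, $\nrminf{\nabh \phi^k} \le \tilde{C}_1$, and by~\eqref{assumption:W1-infty bound}, $\nrminf{\nabh \breve{\Phi}^k} \le C^*$; since the convex combinations stay in the separation region $[-1+\delta, 1-\delta]$, this delivers $\nrminf{\nabh A}, \nrminf{\nabh B} \le C_\delta$. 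Finally, because $\ephi^k$ is mean-free, the Poincar\'e inequality of Proposition~\ref{prop: Poincare} converts $\nrm{\ephi^k}_2 \le C_0\nrm{\nabh \ephi^k}_2$, so that
\[
\nrm{\nabh W}_2 \le C_\delta\bigl(\nrm{\nabh \ephi^n}_2 + \nrm{\nabh \ephi^{n+1}}_2\bigr),
\]
which is the claimed estimate; the $(1-\phi)$ case is identical with $\tilde\Xi(p,q) := F_{1-p}(1-q)$.
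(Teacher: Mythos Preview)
Your strategy is sound and close to the paper's, but there is one genuine slip in the justification of term~(II) for~\eqref{nonlinear est-1-1}. You write that the argument of $G''$ in $\partial_a F_a(\tilde x) = \int_0^1 G''\bigl((1-t)a' + t\tilde x\bigr)(1-t)\,dt$ ``stays $\ge \delta$ under the separation of $\breve{\Phi}^n$ and $\phi^{n+1}$''. But in the first part of the proposition the only hypothesis is separation of $\phi^n$; no separation is assumed for $\phi^{n+1}$, so $\tilde x = 1+\phi^{n+1}$ may be arbitrarily close to $0$, and at $t=1$ the integrand's argument equals $\tilde x$. Hence your pointwise lower bound on that argument fails.

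The conclusion is nonetheless correct, and your own integral already contains the fix: since $a'\in[1+\phi^n,\,1+\breve{\Phi}^n]$ satisfies $a'\ge\delta$ and $\tilde x>0$ by positivity preservation, one has $(1-t)a'+t\tilde x\ge(1-t)\delta$, whence
\[
0\le \partial_a F_{a'}(\tilde x)=\int_0^1\frac{1-t}{(1-t)a'+t\tilde x}\,dt\le\int_0^1\frac{1}{\delta}\,dt=\delta^{-1},
\]
which gives $|(\mathrm{II})|\le\delta^{-1}|\ephi^n|$ pointwise and the desired bound with $C_\delta=\delta^{-1}$. The paper avoids this issue altogether by choosing a different intermediate point in the splitting: it passes through $F_{1+\phi^n}(1+\breve{\Phi}^{n+1})$ rather than your $F_{1+\breve{\Phi}^n}(1+\phi^{n+1})$. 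With that choice, the $\ephi^n$-piece (after the symmetry $F_a(x)=F_x(a)$) is $F_{1+\breve{\Phi}^{n+1}}(1+\breve{\Phi}^n)-F_{1+\breve{\Phi}^{n+1}}(1+\phi^n)$, and Lemma~\ref{estimate of Fa}(3) places the mean-value point between quantities that all enjoy separation ($\breve{\Phi}^{n+1},\breve{\Phi}^n,\phi^n$), yielding $|\cdot|\le\tfrac{1}{2\delta}|\ephi^n|$ directly. The $\ephi^{n+1}$-piece is handled, as in your (I), by $F'\ge 0$.

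For the gradient estimates~\eqref{nonlinear est-1-2} your argument is essentially the paper's: both write the difference as $A\,\ephi^n+B\,\ephi^{n+1}$ with smooth coefficient functions, bound $\|A\|_\infty,\|B\|_\infty$ by separation, bound $\|\nabla_h A\|_\infty,\|\nabla_h B\|_\infty$ via a discrete Lipschitz/chain-rule estimate using $\|\nabla_h\breve{\Phi}^k\|_\infty\le C^*$ and $\|\nabla_h\phi^k\|_\infty\le\tilde C_1$, and close with the discrete Poincar\'e inequality on the mean-free $\ephi^k$. Here separation of $\phi^{n+1}$ \emph{is} assumed, so your segment argument is legitimate.
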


Subtracting the numerical system \eqref{CN-NS}-\eqref{CN-incompressible} from the consistency estimate \eqref{trun6-NS}-\eqref{trun6-incompressible} leads to the following error equations:
\begin{align}
	&\frac{\ehu^{n+1}-\eu^n}{\tau}+b_h ( \teu,\imp{\hat{\breve{\bU}}}) + b_h (\ext{\bu}, \behu) +\nabh e_p^n -\nu\Dh \behu \notag\\
	&\qquad +\gamma\mAh \tephi \nabh \breve{\text{M}}^{n+\frac12} + \gamma\mAh\ext{\phi}\nabh \emu =\bm{\zeta}_1^n, \label{error-NS}\\
	&\frac{\ephi^{n+1}-\ephi^n}{\tau}+\nabh\cdot\left(\mAh \tephi \imp{\hat{\breve{\bU}}} + \mAh\ext{\phi} \behu \right)=\Dh\emu + \zeta_2^n, \label{error-CH}\\
	&\emu=F_{1+\breve{\Phi}^n}(1+\breve{\Phi}^{n+1}) - F_{1+\phi^n}(1+\phi^{n+1}) 
	 - F_{1-\breve{\Phi}^n}(1-\breve{\Phi}^{n+1}) + F_{1-\phi^n}(1-\phi^{n+1})\notag\\
	&\qquad\qquad -\theta_0\tephi -\epsilon^2 \Dh\bbephi +\tau \left(\mathcal{N}(\breve{\Phi}^{n+1}) -\mathcal{N}(\phi^{n+1}) -\mathcal{N}(\breve{\Phi}^n) +\mathcal{N}(\phi^n)\right) ,	\label{error-mu}\\
&\frac{\eu^{n+1}- \ehu^{n+1}}{\tau}+ \frac12 \nabla_h ( 
	e_p^{n+1} - e_p^n )=0 ,  \label{error-pressure-correction}\\ 		
	&\nabh\cdot\eu^{n+1}=0.	\label{error-incompressible}
\end{align}
To proceed with the convergence analysis, the following a-priori assumption is made for the numerical error functions at the previous time steps: 
\begin{equation} \label{a priori-1}
	\| \eu^k \|_2 , \, \| \nabla_h \ephi^k \|_2 \leq \tau^\frac{11}{4} + h^\frac{15}{4} ,  \, \, \, k= n, n-1 , 
	\quad 
	\| \nabla_h e_p^n \|_2  \le \tau^\frac74 + h^\frac{11}{4}  . 
\end{equation} 
Such an a-priori assumption will be recovered by the convergence analysis in the next time step,  which will be demonstrated later. In particular, this induction assumption is valid for $n=0$ and $n=1$. At $n=0$, the initial data is given by a point-wise interpolation of the projection solution, so that the numerical error is of order $O (h^m)$. At the next time step $n=1$, since the second order local truncation error is obtained in the numerical implementation of \eqref{CN-NS}-\eqref{CN-incompressible} from $n=0$ to $n=1$, the higher order consistency analysis outlined in the last section would also be applicable, so that an $O (\tau^3 + h^4)$ consistency is available for the defined numerical error functions.  

In turn, the a-priori assumption~\eqref{a priori-1} leads to a $W_h^{1, \infty}$ bound for the numerical error function at the previous time steps, based on the inverse inequality, the linear refinement requirement $C_1h \leq \tau \leq C_2h$, as well as the discrete Poincar\'e inequality (stated in Proposition~\ref{prop: Poincare}): 
\begin{equation} 
\begin{aligned}  
  & 
\| \ephi^k \|_\infty \leq \frac{C\| \ephi^k \|_{H^1_h}}{h^\frac{1}{2}} \leq C (\tau^\frac94 + h^\frac{13}{4})  \le \tau^2 + h^3 \leq \frac{\delta}{2} ,  
\\
  & 
  \| \nabla_h \ephi^k \|_\infty \leq \frac{C \| \ephi^k \|_\infty}{h} \leq C (\tau^\frac54 + h^\frac94) 
\le \tau + h^2 \le 1 ,  
\end{aligned}   
 \label{a priori-2} 
\end{equation} 
for $k=n, n-1$, provided that $\tau$ and $h$ are sufficiently small. In turn, the phase separation property becomes available for $\phi^n$, in combination with the separation estimate~\eqref{separation-1} for the constructed profile $\breve{\Phi}$:
	\begin{equation}  \label{separation-3} 
		-1+ \delta \le \phi^n \le 1- \delta .
	\end{equation} 
Furthermore, a $W_h^{1, \infty}$ bound of the numerical solution could also be derived: 
\begin{align} 
   &  
  \| \tilde{\phi}^{n+\frac12} \|_\infty = \| \frac32 \phi^n - \frac12 \phi^{n-1} \|_\infty 
  \le \frac32 \| \phi^n \|_\infty + \frac12 \| \phi^{n-1} \|_\infty \le 2 ,  \label{a priori-3-1} 
\\
  & 
  \| \nabla_h \phi^n \|_\infty \le \| \nabla_h \breve{\Phi}^n \|_\infty 
  + \| \nabla_h \ephi^n \|_\infty \le C^* +1 = \tilde{C}_1 ,  
  \label{a priori-3-2} 
\end{align} 
in which the $W_h^{1, \infty}$ assumption~\eqref{assumption:W1-infty bound} for the constructed solution $\breve{\Phi}$ has been applied. 

Taking a discrete inner product with \eqref{error-NS} by $2 \behu = \ehu^{n+1} + \eu^n$ gives 
\begin{equation} 
\begin{aligned} 
	&
	\frac{1}{\tau} ( \| \ehu^{n+1} \|_2^2 - \| \eu^n \|_2^2 ) 
	+ 2 \B {\teu}{\imp{\hat{\breve{\bU}}}}{\behu} 
	+ 2 \B {\ext{\bu}}{\behu}{\behu} \\
	& + 2 \nu \| \nabla_h \behu \|_2^2 = - \langle \nabh e_p^n , \ehu^{n+1} + \eu^n \rangle_1 
	- 2 \gamma \langle \mAh \tephi \nabh \breve{\text{M}}^{n+\frac12} , 
	 \behu \rangle_1 \\ 
        &  \qquad \qquad \qquad  \qquad 
        - 2 \gamma \langle \mAh\ext{\phi}\nabh \emu ,  \behu \rangle_1 
        + 2 \langle \bm{\zeta}_1^n , \behu \rangle_1 .       
\end{aligned} 
  \label{convergence-rough-1} 
\end{equation} 
By the nonlinear identity~\eqref{B est-0-1} (in Lemma~\ref{B-est}), it is clear that 
\begin{equation} 
   \B {\ext{\bu}}{\behu}{\behu} = 0 . \label{convergence-rough-2-1} 
\end{equation} 
For the second term on the left hand side of~\eqref{convergence-rough-1}, we make use of inequality~\eqref{B est-0-2} and obtain 
\begin{equation} 
\begin{aligned} 
  & 
  2 \Big| \B {\teu}{\imp{\hat{\breve{\bU}}}}{\behu}  \Big| 
  \le \| \teu \|_2 ( \| \nabla_h \imp{\hat{\breve{\bU}}} \|_\infty \cdot \| \behu \|_2 
  + \| \imp{\hat{\breve{\bU}}} \|_\infty \cdot \| \nabla_h \behu \|_2 )  
\\
  & 
  \le C^* \| \teu \|_2 ( \| \behu \|_2 +  \| \nabla_h \behu \|_2 )  
  \le C^* \| \teu \|_2 \cdot (C_0 +1)  \| \nabla_h \behu \|_2  
\\
  & 
  \le \frac{(C^* (C_0 +1))^2}{2 \nu}  \| \teu \|_2^2 +  \frac{\nu}{2} \| \nabla_h \behu \|_2^2  
   \le \tilde{C}_2 ( 3 \| \eu^n \|_2^2 + \| \eu^{n-1} \|_2^2 ) 
   +  \frac{\nu}{2} \| \nabla_h \behu \|_2^2 , 
\end{aligned} 
  \label{convergence-rough-2-2} 
\end{equation} 
 with $\tilde{C}_2 = \frac{(C^* (C_0 +1))^2}{2 \nu}$, and the $W_h^{1, \infty}$ assumption~\eqref{assumption:W1-infty bound} for the constructed solution $\breve{\Phi}$ has been applied in the derivation. Notice that the discrete Pincar\'e inequality, $\| \behu \|_2 \le C_0  \| \nabla_h \behu \|_2$, (which comes from Proposition~\ref{prop: Poincare}), was used in the second step, due to the no-penetration boundary condition for $\behu$.  
 
In terms of numerical error inner product associated with the pressure gradient, we see that 
\begin{equation} 
    \langle \nabh e_p^n , \eu^k \rangle_1  = - \langle e_p^n , \nabla_h \cdot \eu^k \rangle_c 
    = 0 ,  \quad \mbox{since $\nabla_h \cdot \eu^n =0$} ,  \quad k = n, n+1 , 
    \label{convergence-rough-3-1} 
\end{equation} 
in which the summation by parts formula~\eqref{summation-2} has been applied. Regarding the other pressure gradient inner product term, we make use of~\eqref{error-pressure-correction} and obtain 
\begin{equation} 
\begin{aligned} 
  \langle \nabh e_p^n , \ehu^{n+1} \rangle_1  
  = & \langle \nabh e_p^n , \eu^{n+1} \rangle_1 + \frac12 \tau \langle \nabh e_p^n , 
  \nabla_h ( e_p^{n+1} - e_p^n ) \rangle_1    
\\
  = & 
  \frac12 \tau \langle \nabh e_p^n , 
  \nabla_h ( e_p^{n+1} - e_p^n ) \rangle_1    
\\ 
  = & 
   \frac14 \tau ( \| \nabh e_p^{n+1} \|_2^2 - \| \nabla_h e_p^n \|_2^2 
  - \| \nabla_h ( e_p^{n+1} - e_p^n ) \|_2^2 )  , 
\end{aligned} 
  \label{convergence-rough-3-2} 
\end{equation} 
in which the second step comes from the fact that $\langle \nabh e_p^n , \eu^{n+1} \rangle_1 =0$. For the second term on the right hand side of~\eqref{convergence-rough-1}, a direct application of discrete H\"older inequality implies that 
\begin{equation} 
\begin{aligned} 
  & 
  - 2 \gamma \langle \mAh \tephi \nabh \breve{\text{M}}^{n+\frac12} , 
	 \behu \rangle_1  \le 2 \gamma \| \tephi \|_2 
	 \cdot \| \nabh \breve{\text{M}}^{n+\frac12} \|_\infty  \cdot \| \behu \|_2 
\\
  & 
   \le 2 \gamma C^* \| \tephi \|_2  \cdot \| \behu \|_2  
  \le 2 \gamma C_0 C^* \| \tephi \|_2  \cdot \| \nabla_h \behu \|_2   
\\
  & 
  \le \frac{4 \gamma^2 C_0^2 ( C^* )^2}{\nu} \| \tephi \|_2^2  
  + \frac{\nu}{4} \| \nabla_h \behu \|_2^2  
  \le \tilde{C}_3 ( 3 \| \ephi^n \|_2^2 + \| \ephi^{n-1} \|_2^2 ) 
   +  \frac{\nu}{4} \| \nabla_h \behu \|_2^2 , 
\end{aligned} 
  \label{convergence-rough-4} 
\end{equation}  
with $\tilde{C}_3 = \frac{4 \gamma^2 C_0^2 ( C^* )^2}{\nu}$. Again, the $W_h^{1, \infty}$ assumption~\eqref{assumption:W1-infty bound mu} and the discrete Poincar\'e inequality, $\| \behu \|_2 \le C_0  \| \nabla_h \behu \|_2$, have been applied in the derivation. The local truncation error term could be controlled in a straightforward manner: 
\begin{equation} 
   2 \langle \bm{\zeta}_1^n , \behu \rangle_1 
   \le 2 \| \bm{\zeta}_1^n \|_2 \cdot \| \behu \|_2 
   \le 2 C_0 \| \bm{\zeta}_1^n \|_2 \cdot \| \nabla_h \behu \|_2  
   \le \frac{4 C_0^2}{\nu} \| \bm{\zeta}_1^n \|_2^2 
   + \frac{\nu}{4} \| \nabla_h \behu \|_2^2 .  \label{convergence-rough-5} 
\end{equation}  
Subsequently, a substitution of~\eqref{convergence-rough-2-1}-\eqref{convergence-rough-5} into \eqref{convergence-rough-1} yields 
\begin{equation} 
\begin{aligned} 
  & 
	\frac{1}{\tau} ( \| \ehu^{n+1} \|_2^2 - \| \eu^n \|_2^2 ) 
	+ \nu \| \nabla_h \behu \|_2^2 
	+ \frac14 \tau ( \| \nabh e_p^{n+1} \|_2^2 - \| \nabla_h e_p^n \|_2^2 ) 
\\	
	\le & 
        - 2 \gamma \langle \mAh\ext{\phi}\nabh \emu ,  \behu \rangle_1  
       +  \frac14 \tau \| \nabla_h ( e_p^{n+1} - e_p^n ) \|_2^2    
\\
    &   
    +  \tilde{C}_2 ( 3 \| \eu^n \|_2^2 + \| \eu^{n-1} \|_2^2 )     
    +  \tilde{C}_3 ( 3 \| \ephi^n \|_2^2 + \| \ephi^{n-1} \|_2^2 )  
  + \frac{4 C_0^2}{\nu} \| \bm{\zeta}_1^n \|_2^2   .   
\end{aligned} 
  \label{convergence-rough-6-1} 
\end{equation} 
Meanwhile, taking a discrete inner product with~\eqref{error-pressure-correction} by $2 \eu^{n+1}$ gives 
\begin{equation} 
  \| \eu^{n+1} \|_2^2 - \| \ehu^{n+1} \|_2^2 + \| \eu^{n+1} - \ehu^{n+1} \|_2^2 = 0 , \quad 
  \mbox{so that} \, \, \,  \| \eu^{n+1} \|_2^2 - \| \ehu^{n+1} \|_2^2 
  + \frac14 \tau^2 \| \nabla_h ( e_p^{n+1} - e_p^n ) \|_2^2 = 0 , 
  \label{convergence-rough-6-2} 
\end{equation} 
in which the divergence-free condition for $\eu^{n+1}$ has been used. In turn, a combination of~\eqref{convergence-rough-6-1} and \eqref{convergence-rough-6-2} results in 
\begin{equation} 
\begin{aligned} 
  & 
	\frac{1}{\tau} ( \| \eu^{n+1} \|_2^2 - \| \eu^n \|_2^2 ) 
	+ \nu \| \nabla_h \behu \|_2^2 
	+ \frac14 \tau ( \| \nabh e_p^{n+1} \|_2^2 - \| \nabla_h e_p^n \|_2^2 ) 
\\	
	\le & 
        - 2 \gamma \langle \mAh\ext{\phi}\nabh \emu ,  \behu \rangle_1          
\\
    &   
    +  \tilde{C}_2 ( 3 \| \eu^n \|_2^2 + \| \eu^{n-1} \|_2^2 )     
    +  \tilde{C}_3 ( 3 \| \ephi^n \|_2^2 + \| \ephi^{n-1} \|_2^2 )  
  + \frac{4 C_0^2}{\nu} \| \bm{\zeta}_1^n \|_2^2   .   
\end{aligned} 
  \label{convergence-rough-6-3} 
\end{equation} 

Now we proceed into a rough error estimate for the phase field error evolutionary equation. Taking a discrete inner product with~\eqref{error-CH} by $\emu$ leads to 
\begin{equation}
\begin{aligned}
\frac{1}{\tau} \langle\ephi^{n+1}, \emu \rangle_c & +\|\nabla_h \emu \|_2^2 
- \langle \mAh\ext{\phi} \nabla_h \emu , \behu \rangle_1 \\
&= \langle \mAh \tephi \nabla_h \emu , \imp{\hat{\breve{\bU}}} \rangle_1 
+ \langle \zeta_2^n , \emu \rangle_c + \frac{1}{\tau} \langle \ephi^n, \emu \rangle_c , 
\end{aligned}
\label{convergence-rough-7}
\end{equation}
with an application of summation by parts formula~\eqref{summation-5}. The right hand side terms could be analyzed as follows, with the help of the $\ell^\infty$ bound \eqref{assumption:W1-infty bound}: 
\begin{align} 
  &
  \langle \mAh \tephi \nabla_h \emu , \imp{\hat{\breve{\bU}}} \rangle_1  
  \le \| \tephi \|_2 \cdot \| \nabla_h \emu \|_2 \cdot \| \imp{\hat{\breve{\bU}}} \|_\infty 
  \le C^* \| \tephi \|_2 \cdot \| \nabla_h \emu \|_2  \nonumber 
\\
  \le & 
  (C^*)^2 \| \tephi \|_2^2  + \frac14 \| \nabla_h \emu \|_2^2 
  \le (C^*)^2 ( 3 \| \ephi^n \|_2^2  + \| \ephi^{n-1} \|_2^2 ) 
  + \frac14 \| \nabla_h \emu \|_2^2  , \label{convergence-rough-8-1} 
\\
  & 
  \langle \zeta_2^n , \emu \rangle_c  
   \le \| \zeta_2^n \|_{-1, h} \cdot \| \nabla_h \emu \|_2  
   \le C_1 \| \zeta_2^n \|_2 \cdot  \| \nabla_h \emu \|_2^2 
   \le C_1^2 \| \zeta_2^n \|_2^2 + \frac14 \| \nabla_h \emu \|_2^2  ,  
   \label{convergence-rough-8-2}  
\\
  & 
  \frac{1}{\tau} \langle \ephi^n, \emu \rangle_c 
  \le \frac{1}{\tau} \| \ephi^n \|_{-1, h} \cdot \| \nabla_h \emu \|_2 
  \le \frac{C_1}{\tau} \| \ephi^n \|_2 \cdot \| \nabla_h \emu \|_2  \nonumber 
\\
  &  \qquad \qquad \qquad 
  \le \frac{C_1^2}{\tau^2} \| \ephi^n \|_2^2 + \frac14 \| \nabla_h \emu \|_2 ^2 ,  
  \label{convergence-rough-8-3}  
\end{align} 
in which the preliminary estimate, $\| f \|_{-1, h} \le C_1 \| f \|_2$ for any $\overline{f} =0$ (as stated in Proposition~\ref{prop: Poincare}), has been repeatedly applied. In terms of the first term on the left hand side of~\eqref{convergence-rough-7}, we begin with the following expansion: 
\begin{equation} 
\begin{aligned} 
   \langle\ephi^{n+1}, \emu \rangle_c 
   = & \langle \ephi^{n+1} , F_{1 + \breve{\Phi}^n}(1 + \breve{\Phi}^{n+1}) - F_{1 + \phi^n}(1+\phi^{n+1}) \rangle_c  - \theta_0 \langle \ephi^{n+1} , \tephi \rangle_c 
\\
  & 
  + \langle \ephi^{n+1} , - F_{1 - \breve{\Phi}^n}(1 - \breve{\Phi}^{n+1}) + F_{1 - \phi^n}(1-\phi^{n+1}) \rangle_c  - \epsilon^2 \langle \ephi^{n+1} , \Dh \bbephi \rangle_c
\\
  & 
  + \tau \langle \ephi^{n+1} , \mathcal{N}(\breve{\Phi}^{n+1}) 
  -\mathcal{N}(\phi^{n+1})   \rangle_c  
  - \tau \langle \ephi^{n+1} , \mathcal{N}(\breve{\Phi}^n) - \mathcal{N}(\phi^n) \rangle_c . 
\end{aligned} 
  \label{convergence-rough-9-1}  
\end{equation} 
The first and third terms have been analyzed in~\eqref{nonlinear est-1-1} (given by Proposition~\ref{prop: nonlinear est-2}). The second and fourth terms could be bounded as follows:  
\begin{align} 
  - \theta_0 \langle \ephi^{n+1} , \tephi \rangle_c  
  = & - \theta_0 \langle \ephi^{n+1} , \frac32 \ephi^n - \frac12 \ephi^{n-1}  \rangle_c  
  \ge - \theta_0 \| \ephi^{n+1} \|_2 ( \frac32 \| \ephi^n \|_2 + \frac12 \| \ephi^{n-1} \|_2 ) , 
  \label{convergence-rough-9-2} 
\\
  - \langle \ephi^{n+1} , \Dh \bbephi \rangle_c 
  = & - \langle \ephi^{n+1} , \Dh ( \frac34 \ephi^{n+1} + \frac14 \ephi^{n-1} ) \rangle_c
  =  \langle \nabla_h \ephi^{n+1} , \nabla_h ( \frac34 \ephi^{n+1} 
   + \frac14 \ephi^{n-1} ) \rangle_c  \nonumber 
\\
  \ge & 
  \frac58  \| \nabla_h \ephi^{n+1} \|_2^2 - \frac18 \| \nabla_h \ephi^{n-1} \|_2^2 . 
  \label{convergence-rough-9-3} 
\end{align}  
It is observed that the fifth term in the expansion~\eqref{convergence-rough-9-1} must be non-negative, due to the monotone property of ${\mathcal N}$, as well as the fact that $\ephi^{n+1} = \breve{\Phi}^{n+1} - \phi^{n+1}$: 
\begin{equation}  
  \langle \ephi^{n+1} , \mathcal{N}(\breve{\Phi}^{n+1}) 
  -\mathcal{N}(\phi^{n+1})   \rangle_c  
  = \langle \breve{\Phi}^{n+1} - \phi^{n+1} , \mathcal{N}(\breve{\Phi}^{n+1}) 
  -\mathcal{N}(\phi^{n+1})   \rangle_c  \ge 0 . 
   \label{convergence-rough-9-4} 
\end{equation}  
Regarding the last term in the expansion~\eqref{convergence-rough-9-1}, we see that both the constructed solution $\breve{\Phi}^n$ and the numerical solution $\phi^n$ preserve the phase separation property~\eqref{separation-2}. In turn, an application of the preliminary error estimate~\eqref{nonlinear est-0-1} (in Proposition~\ref{prop: nonlinear est-1}) reveals that  
\begin{equation} 
\begin{aligned} 
  & 
  \| {\mathcal N} ( \breve{\Phi}^n ) - {\mathcal N} ( \phi^n) \|_2 \le C_\delta \| \ephi^n \|_2 ,  \quad 
  \mbox{so that} 
\\
  & 
  - \langle \ephi^{n+1} , \mathcal{N}(\breve{\Phi}^n) - \mathcal{N}(\phi^n) \rangle_c 
  \ge - C_\delta \| \ephi^{n+1} \|_2 \cdot \| \ephi^n \|_2 . 
\end{aligned} 
  \label{convergence-rough-9-5} 
\end{equation} 
Subsequently, a substitution of~\eqref{convergence-rough-8-1}-\eqref{convergence-rough-9-5} into \eqref{convergence-rough-7} yields 
\begin{equation}
\begin{aligned}
  & 
 \frac{\epsilon^2}{\tau} ( \frac58  \| \nabla_h \ephi^{n+1} \|_2^2 
 - \frac18 \| \nabla_h \ephi^{n-1} \|_2^2 )  + \frac14 \|\nabla_h \emu \|_2^2 
- \langle \mAh\ext{\phi} \nabla_h \emu , \behu \rangle_1 \\
 \le & 
 (C^*)^2 ( 3 \| \ephi^n \|_2^2  + \| \ephi^{n-1} \|_2^2 ) 
  + C_1^2 \| \zeta_2^n \|_2^2  +  \frac{C_1^2}{\tau^2} \| \ephi^n \|_2^2  
\\
  & 
  + ( 2 C_\delta + \frac32 \theta_0 + 1) \frac{1}{\tau} \| \ephi^{n+1} \|_2 \cdot \| \ephi^n \|_2 
   + \frac{\theta_0}{2 \tau} \| \ephi^{n+1} \|_2 \cdot \| \ephi^{n-1} \|_2 ,  \quad 
   \mbox{if $C_\delta \tau \le 1$} .  
\end{aligned}
\label{convergence-rough-10}
\end{equation}

A combination of~\eqref{convergence-rough-6-3} and \eqref{convergence-rough-10} gives 
\begin{equation}
\begin{aligned} 
  & 
  \frac{1}{2 \gamma} \| \eu^{n+1} \|_2^2  
  +  \frac{5 \epsilon^2}{8}  \| \nabla_h \ephi^{n+1} \|_2^2 
  + \frac{\tau^2}{8 \gamma} \| \nabla_h e_p^{n+1} \|_2^2 
\\
  \le & \frac{1}{2 \gamma} \| \eu^n \|_2^2 
   + \frac{\epsilon^2}{8} \| \nabla_h \ephi^{n-1} \|_2^2   
   + \frac{\tau^2}{8 \gamma} \| \nabla_h e_p^n \|_2^2    
   + C_1^2 \tau \| \zeta_2^n \|_2^2  
   + \frac{2 C_0^2 \tau}{\gamma \nu} \| \bm{\zeta}_1^n \|_2^2  
   \\
    & 
  + ( 3 (C^*)^2 \tau + \frac{3 \tilde{C}_3}{2 \gamma} \tau + \frac{C_1^2}{\tau} ) \| \ephi^n \|_2^2  
 + ( ( C^* )^2 + \frac{\tilde{C}_3}{2 \gamma} ) \tau \| \ephi^{n-1} \|_2^2  
\\
  & 
  +  \frac{\tilde{C}_2 \tau}{2 \gamma} ( 3 \| \eu^n \|_2^2 + \| \eu^{n-1} \|_2^2 )   
  + \tilde{C}_4 \| \ephi^{n+1} \|_2 \cdot \| \ephi^n \|_2 
   + \frac12 \theta_0 \| \ephi^{n+1} \|_2 \cdot \| \ephi^{n-1} \|_2  , 
\end{aligned}
  \label{convergence-rough-11-1}
\end{equation}
with $\tilde{C}_4 = 2 C_\delta + \frac32 \theta_0 + 1$. Notice that the term $\langle \mAh\ext{\phi}\nabh \emu ,  \behu \rangle_1$ cancels each other between \eqref{convergence-rough-6-3} and \eqref{convergence-rough-10}, and this fact plays a crucial role in the error estimate. For the right hand side of \eqref{convergence-rough-11-1}, the following estimates are available, which come from the a-priori assumption \eqref{a priori-1}: 
\begin{equation}
\begin{aligned} 
  & 
( \frac{1}{2 \gamma} + \frac{3 \tilde{C}_2 \tau}{2 \gamma} ) \| \eu^n \|_2^2 
\le \frac{1}{\gamma} \| \eu^n \|_2^2  \le C ( \tau^\frac{11}{2} + h^\frac{15}{2} ) ,  \quad 
\frac{\tilde{C}_2 \tau}{2 \gamma} \| \eu^{n-1} \|_2^2 \le C \tau ( \tau^\frac{11}{2} + h^\frac{15}{2} ) , 
\\
 &
 \frac{\epsilon^2}{8} \| \nabla_h \ephi^{n-1} \|_2^2   \le \frac{\epsilon^2}{4} ( \tau^\frac{11}{2} + h^\frac{15}{2} ) , \quad 
 ( ( C^* )^2 + \frac{\tilde{C}_3}{2 \gamma} ) \tau \| \ephi^{n-1} \|_2^2   
 \le C \tau ( \tau^\frac{11}{2} + h^\frac{15}{2} ) , 
\\
  & 
  ( 3 (C^*)^2 \tau + \frac{3 \tilde{C}_3}{2 \gamma} \tau + \frac{C_1^2}{\tau} ) \| \ephi^n \|_2^2  
  \le ( \frac{C_1^2}{\tau} + 1 ) \| \ephi^n \|_2^2  \le C ( \tau^\frac{9}{2} + h^\frac{13}{2} ) , 
\\
  & 
  C_1^2 \tau \| \zeta_2^n \|_2^2  , \, \, 
  \frac{2 C_0^2 \tau}{\gamma \nu} \| \bm{\zeta}_1^n \|_2^2  
  \le C \tau ( \tau^6 + h^8 ) ,  \quad 
  \frac{\tau^2}{8 \gamma} \| \nabla_h e_p^n \|_2^2  \le C ( \tau^\frac{11}{2} + h^\frac{15}{2} ) , 
\\
  & 
  \tilde{C}_4 \| \ephi^{n+1} \|_2 \cdot \| \ephi^n \|_2 
  \le C_1 \tilde{C}_4 \| \nabla_h \ephi^{n+1} \|_2 \cdot \| \ephi^n \|_2
  \le 2 C_1^2 \tilde{C}_4^2 \epsilon^{-2}  \| \ephi^n \|_2^2 
  + \frac{\epsilon^2}{8} \| \nabla_h \ephi^{n+1} \|_2^2 , 
\\
  & 
  \frac12 \theta_0 \| \ephi^{n+1} \|_2 \cdot \| \ephi^{n-1} \|_2 
  \le \frac12 C_1^2 \theta_0^2 \epsilon^{-2}  \| \ephi^{n-1} \|_2^2 
  + \frac{\epsilon^2}{8} \| \nabla_h \ephi^{n+1} \|_2^2 ,   
\end{aligned} 
  \label{convergence-rough-11-2}
\end{equation}
where the fact that $\| f \|_{-1,h} \leq C_1 \| f \|_2$, as well as the refinement constraint $C_1 h \le \tau \le C_2 h$, have been repeatedly used. Going back~\eqref{convergence-rough-11-1}, we obtain 
\begin{equation} 
\begin{aligned} 
  & 
  \frac{1}{2 \gamma} \| \eu^{n+1} \|_2^2  
  +  \frac{3 \epsilon^2}{8}  \| \nabla_h \ephi^{n+1} \|_2^2 
  \le   C ( \tau^\frac{9}{2} + h^\frac{13}{2} ) 
  + C \epsilon^{-2} ( \| \ephi^n \|_2^2  + \| \ephi^{n-1} \|_2^2 ) 
  \le C ( \tau^\frac{9}{2} + h^\frac{13}{2} ) ,  
\\
  & \mbox{so that} \quad 
  \| \ephi^{n+1} \|_2 + \| \nabla_h \ephi^{n+1} \|_2 \le (C_0 + 1) \| \nabla_h \ephi^{n+1} \|_2 
  \le C ( \tau^\frac{9}{4} + h^\frac{13}{4} ) , 
\end{aligned} 
    \label{convergence-rough-11-3}
\end{equation}
under the linear refinement requirement $C_1 h \le \tau \le C_2 h$, provided that $\tau$ and $h$ are sufficiently small. As a direct consequence of the rough error estimate \eqref{convergence-rough-11-3}, an application of inverse inequality reveals that 
\begin{align}
&
\| \ephi^{n+1} \|_\infty  \leq \frac{C \| \ephi^{n+1} \|_{H_h^1} }{h^\frac{1}{2}}
\le \hat {C}_1 ( \tau^\frac74 + h^\frac{11}{4} )  \le \tau + h \le \frac{\delta}{2} , 
   \label{convergence-rough-11-4}  
\\
&
  \| \nabla_h \ephi^{n+1} \|_\infty  \leq \frac{C \| \ephi^{n+1} \|_\infty }{h}
\le C \hat {C}_1 ( \tau^\frac34 + h^\frac{7}{4} )  \le \tau^\frac12 + h \le 1 . 
\label{convergence-rough-11-5}  
\end{align}
With the help of the separation estimate~\eqref{separation-1} for the constructed profile $\breve{\Phi}$, the phase separation property becomes available for $\phi^{n+1}$, at the next time step: 
	\begin{equation}  \label{separation-4} 
		-1+ \delta \le \phi^{n+1} \le 1- \delta .
	\end{equation} 
In addition, a $W_h^{1, \infty}$ bound of $\phi^{n+1}$ is also valid: 
\begin{equation} 
  \| \nabla_h \phi^{n+1} \|_\infty \le \| \nabla_h \breve{\Phi}^{n+1} \|_\infty 
  + \| \nabla_h \ephi^{n+1} \|_\infty \le C^* +1 = \tilde{C}_1 ,  
  \label{a priori-4} 
\end{equation} 
in combination with the $W_h^{1, \infty}$ assumption~\eqref{assumption:W1-infty bound} for the constructed solution $\breve{\Phi}$.

\subsection{The refined error estimate} 

Before proceeding into the refined error estimate, we derive a few preliminary results for the nonlinear error term. The following nonlinear error is introduced, with $\emu = {\mathcal NLE}^n - \epsilon^2 \Dh\bbephi$: 
\begin{equation} 
\begin{aligned} 
  {\mathcal NLE}^n = & F_{1+\breve{\Phi}^n}(1+\breve{\Phi}^{n+1}) - F_{1+\phi^n}(1+\phi^{n+1}) 
	 - F_{1-\breve{\Phi}^n}(1-\breve{\Phi}^{n+1}) + F_{1-\phi^n}(1-\phi^{n+1}) 
\\
	 &
	  -\theta_0\tephi +\tau \Big(\mathcal{N}(\breve{\Phi}^{n+1}) -\mathcal{N}(\phi^{n+1}) -\mathcal{N}(\breve{\Phi}^n) +\mathcal{N}(\phi^n) \Big) .  
\end{aligned} 
  \label{nonlinear est-2} 
\end{equation} 
We see that both the constructed solution $\breve{\Phi}^k$ and numerical solution $\phi^k$ preserve the phase separation property~\eqref{separation-2}, for $k= n, n+1$, as given by~\eqref{separation-1}, \eqref{separation-3} and \eqref{separation-4}. In addition, the additional conditions~\eqref{nonlinear est-condition-1} are satisfied by $\breve{\Phi}^k$ and $\phi^k$, as given by \eqref{assumption:W1-infty bound}, \eqref{a priori-2}, \eqref{a priori-3-2}, \eqref{convergence-rough-11-4} and \eqref{a priori-4}, respectively. Therefore, an application of the preliminary error estimate~\eqref{nonlinear est-0-2} (stated in Proposition~\ref{prop: nonlinear est-1}) indicates that   
\begin{equation} 
\begin{aligned} 
  & 
   \| \nabla_h ( {\mathcal N} ( \breve{\Phi}^n ) - {\mathcal N} ( \phi^n) ) \|_2 
  \le C_\delta ( \| \ephi^n \|_2 + \| \nabla_h \ephi^n \|_2 ) ,  
\\
  &  
  \| \nabla_h ( {\mathcal N} ( \breve{\Phi}^{n+1} ) - {\mathcal N} ( \phi^{n+1}) ) \|_2 
  \le C_\delta ( \| \ephi^{n+1} \|_2 + \| \nabla_h \ephi^{n+1} \|_2 ) .  
\end{aligned} 
  \label{nonlinear est-3} 
\end{equation} 
Similarly, the additional conditions in~\eqref{nonlinear est-condition-2} are satisfied, so that we are able to apply Proposition~\ref{prop: nonlinear est-2} and obtain~\eqref{nonlinear est-1-2}. A combination of~\eqref{nonlinear est-1-2} with \eqref{nonlinear est-3} gives 
\begin{equation} 
\begin{aligned} 
  \| \nabla_h {\mathcal NLE}^n \|_2  
  \le & C_\delta ( \| \nabla_h \ephi^n \|_2 + \| \nabla_h \ephi^{n+1} \|_2 ) 
  +  \theta_0 \| \nabla_h \tephi \|_2  
\\
  \le & 
  C_\delta ( \| \nabla_h \ephi^n \|_2 + \| \nabla_h \ephi^{n+1} \|_2 ) 
  +  \theta_0 ( \frac32 \| \nabla_h \ephi^n \|_2 + \frac12 \| \nabla_h \ephi^{n-1} \|_2 ) ,  
\end{aligned} 
  \label{nonlinear est-4} 
\end{equation} 
in which the discrete Poincar\'e inequality (stated in Proposition~\ref{prop: Poincare}) has been applied. 

The rough error estimate~\eqref{convergence-rough-6-3} is still valid, for the velocity error evolutionary equations. Now we carry out a refined rough error estimate for the phase variable error evolutionary equation. Taking a discrete inner product with~\eqref{error-CH} by $-\Delta_h \bbephi$ gives  
\begin{equation}
\begin{aligned}
- \frac{1}{\tau} \langle\ephi^{n+1} - \ephi^n, \Delta_h \bbephi \rangle_c & 
- \langle \nabla_h \emu ,  \nabla_h \Delta_h \bbephi \rangle_c 
+ \langle \mAh\ext{\phi} \nabla_h \Delta_h \bbephi , \behu \rangle_1 \\
&= - \langle \mAh \tephi \nabla_h \Delta_h \bbephi , \imp{\hat{\breve{\bU}}} \rangle_1 
- \langle \zeta_2^n , \Delta_h \bbephi \rangle_c  .  
\end{aligned}
  \label{convergence-1} 
\end{equation}
The temporal discretization term could be analyzed as follows: 
\begin{equation} 
\begin{aligned} 
  & 
  - \langle \ephi^{n+1} - \ephi^n, \Delta_h \bbephi \rangle_c 
  = - \langle \ephi^{n+1} - \ephi^n, \Delta_h (\frac34 \ephi^{n+1} 
   + \frac14 \ephi^{n-1} ) \rangle_c  
\\
  = & 
    \langle \nabla_h (\ephi^{n+1} - \ephi^n ) , \nabla_h (\frac34 \ephi^{n+1} 
   + \frac14 \ephi^{n-1} ) \rangle_c  
\\
  = & 
    \frac12 \langle \nabla_h (\ephi^{n+1} - \ephi^n ) , 
     \nabla_h ( \ephi^{n+1} + \ephi^n ) \rangle_c 
   + \frac14 
    \langle \nabla_h (\ephi^{n+1} - \ephi^n ) , \nabla_h ( \ephi^{n+1} 
   - 2 \ephi^n +  \ephi^{n-1} ) \rangle_c   
\\
  \ge & 
    \frac12 ( \| \nabla_h \ephi^{n+1} \|_2^2 - \| \nabla_h \ephi^n \|_2^2 )  
    + \frac18 ( \| \nabla_h (\ephi^{n+1} - \ephi^n ) \|_2^2  
    - \| \nabla_h ( \ephi^n -  \ephi^{n-1} ) \|_2^2 )  .   
\end{aligned} 
  \label{convergence-2} 
\end{equation} 
The estimate for the diffusion part is straightforward:  
\begin{align} 
  - \langle \nabla_h \emu ,  \nabla_h \Delta_h \bbephi \rangle_c  
  = & - \langle \nabla_h {\mathcal NLE}^n - \epsilon^2 \nabla_h \Delta_h \bbephi ,  
   \nabla_h \Delta_h \bbephi \rangle_c    \nonumber 
\\
  = & 
    \epsilon^2 \| \nabla_h \Delta_h \bbephi \|_2^2 
  - \langle \nabla_h {\mathcal NLE}^n ,  \nabla_h \Delta_h \bbephi \rangle_c  
  \nonumber 
\\
  \ge & 
  \frac{\epsilon^2}{2} \| \nabla_h \Delta_h \bbephi \|_2^2 
  - \frac12 \epsilon^{-2} \| \nabla_h {\mathcal NLE}^n \|_2^2 ,  \label{convergence-3-1}  
\\
 \| \nabla_h \Delta_h \bbephi \|_2^2  = & 
 \| \nabla_h \Delta_h ( \frac34 \ephi^{n+1} + \frac14 \ephi^{n-1} ) \|_2^2 
 \ge \frac38 \| \nabla_h \Delta_h \ephi^{n+1} \|_2^2 
  - \frac18 \| \nabla_h \Delta_h \ephi^{n-1} \|_2^2 , 
   \label{convergence-3-2}  
\end{align} 
in which the Cauchy inequality has been repeatedly applied in the derivation. Meanwhile, the analysis for the right hand side terms of~\eqref{convergence-1} could be performed in a similar  fashion as in \eqref{convergence-rough-8-1}-\eqref{convergence-rough-8-2}:  
\begin{align} 
  &
  - \langle \mAh \tephi \nabla_h \Delta_h \bbephi , \imp{\hat{\breve{\bU}}} \rangle_1  
  \le \| \tephi \|_2 \cdot \| \nabla_h \Delta_h \bbephi \|_2 \cdot \| \imp{\hat{\breve{\bU}}} \|_\infty  
  \nonumber 
\\
  \le & 
  C^* \| \tephi \|_2 \cdot \| \nabla_h \Delta_h \bbephi \|_2  
  \le  
  2 (C^*)^2 \epsilon^{-2} \| \tephi \|_2^2  
   + \frac{\epsilon^2}{8} \| \nabla_h \Delta_h \bbephi \|_2^2   \nonumber 
\\
  \le & 
  2  (C^*)^2 \epsilon^{-2} ( 3 \| \ephi^n \|_2^2  + \| \ephi^{n-1} \|_2^2 ) 
  + \frac{\epsilon^2}{8} \| \nabla_h \Delta_h \bbephi \|_2^2  , \label{convergence-4-1} 
\\
  & 
  - \langle \zeta_2^n , \Delta_h \bbephi \rangle_c  
   \le \| \zeta_2^n \|_{-1, h} \cdot \| \nabla_h \Delta_h \bbephi \|_2  
   \le C_1 \| \zeta_2^n \|_2 \cdot  \| \nabla_h \Delta_h \bbephi \|_2^2  \nonumber 
\\
  \le & 
    2 C_1^2 \epsilon^{-2} \| \zeta_2^n \|_2^2 
   + \frac{\epsilon^2}{8} \| \nabla_h \Delta_h \bbephi \|_2^2   .   
   \label{convergence-4-2}  
\end{align} 
Subsequently, a substitution of~\eqref{convergence-2}-\eqref{convergence-4-2} into \eqref{convergence-1} yields 
\begin{equation}
\begin{aligned} 
  & 
  \frac{1}{2 \tau} ( \| \nabla_h \ephi^{n+1} \|_2^2 - \| \nabla_h \ephi^n \|_2^2 )  
    + \frac{1}{8 \tau} ( \| \nabla_h (\ephi^{n+1} - \ephi^n ) \|_2^2  
    - \| \nabla_h ( \ephi^n -  \ephi^{n-1} ) \|_2^2 )  
\\
  & 
    + \frac{3 \epsilon^2}{32} \| \nabla_h \Delta_h \ephi^{n+1} \|_2^2  
  \le  
   \frac12 \epsilon^{-2} \| \nabla_h {\mathcal NLE}^n \|_2^2     
- \langle \mAh\ext{\phi} \nabla_h \Delta_h \bbephi , \behu \rangle_1 
\\
   &  \qquad \qquad \qquad \qquad 
   + 2  (C^*)^2 \epsilon^{-2} ( 3 \| \ephi^n \|_2^2  + \| \ephi^{n-1} \|_2^2 )  
     + 2 C_1^2 \epsilon^{-2} \| \zeta_2^n \|_2^2  
     + \frac{\epsilon^2}{32} \| \nabla_h \Delta_h \ephi^{n-1} \|_2^2   .  
\end{aligned}
  \label{convergence-5} 
\end{equation}

Meanwhile, in the preliminary estimate~\eqref{convergence-rough-6-3}, the nonlinear inner product term could be expanded as follows: 
\begin{align} 
  - \langle \mAh\ext{\phi}\nabh \emu ,  \behu \rangle_1  
  = & - \langle \mAh\ext{\phi}\nabh {\mathcal NLE}^n ,  \behu \rangle_1   \nonumber 
\\
  & 
    + \epsilon^2 \langle \mAh\ext{\phi}\nabh \Delta_h \bbephi ,  \behu \rangle_1 ,  
    \label{convergence-6-1} 
\\  
  - \langle \mAh\ext{\phi}\nabh {\mathcal NLE}^n ,  \behu \rangle_1  
  \le & \| \ext{\phi} \|_\infty \cdot \| \nabh {\mathcal NLE}^n \|_2 \cdot \|  \behu \|_2  
  \nonumber 
\\
  \le & 
  2 C_0 \| \nabh {\mathcal NLE}^n \|_2 \cdot \| \nabla_h \behu \|_2   \nonumber 
\\ 
  \le & 
     \frac{4 \gamma C_0^2}{\nu} \| \nabh {\mathcal NLE}^n \|_2 ^2 
  + \frac{\nu}{4 \gamma} \| \nabla_h \behu \|_2^2 . \label{convergence-6-2} 
\end{align} 
Its substitution into \eqref{convergence-rough-6-3} gives 
\begin{equation} 
\begin{aligned} 
  & 
	\frac{1}{\tau} ( \| \eu^{n+1} \|_2^2 - \| \eu^n \|_2^2 ) 
	+ \frac{\nu}{2} \| \nabla_h \behu \|_2^2 
	+ \frac14 \tau ( \| \nabh e_p^{n+1} \|_2^2 - \| \nabla_h e_p^n \|_2^2 )       
\\
    \le &   
       \tilde{C}_2 ( 3 \| \eu^n \|_2^2 + \| \eu^{n-1} \|_2^2 )     
    +  \tilde{C}_3 ( 3 \| \ephi^n \|_2^2 + \| \ephi^{n-1} \|_2^2 )  
  + \frac{4 C_0^2}{\nu} \| \bm{\zeta}_1^n \|_2^2  
\\
  & 
  + \frac{8 \gamma^2 C_0^2}{\nu} \| \nabh {\mathcal NLE}^n \|_2 ^2  
  + 2 \gamma \epsilon^2 \langle \mAh\ext{\phi}\nabh \Delta_h \bbephi , 
   \behu \rangle_1  .   
\end{aligned} 
  \label{convergence-6-3} 
\end{equation} 

Finally, a combination of~\eqref{convergence-5} and \eqref{convergence-6-3} leads to 
\begin{equation}
\begin{aligned} 
  & 
  \frac{1}{2 \tau} ( \| \nabla_h \ephi^{n+1} \|_2^2 - \| \nabla_h \ephi^n \|_2^2 )  
    + \frac{1}{8 \tau} ( \| \nabla_h (\ephi^{n+1} - \ephi^n ) \|_2^2  
    - \| \nabla_h ( \ephi^n -  \ephi^{n-1} ) \|_2^2 )  
\\
  & 
  + \frac{\epsilon^{-2}}{2 \gamma \tau} ( \| \eu^{n+1} \|_2^2 - \| \eu^n \|_2^2 ) 
	+ \frac{\epsilon^{-2}}{8 \gamma} \tau ( \| \nabh e_p^{n+1} \|_2^2 - \| \nabla_h e_p^n \|_2^2 )     
    + \frac{3 \epsilon^2}{32} \| \nabla_h \Delta_h \ephi^{n+1} \|_2^2  
\\
  \le & 
   \Big( \frac{4 \gamma C_0^2}{\nu} + \frac12 \Big) \epsilon^{-2} 
   \| \nabla_h {\mathcal NLE}^n \|_2^2    
   + \frac{\tilde{C}_2 \epsilon^{-2}}{2 \gamma} ( 3 \| \eu^n \|_2^2 + \| \eu^{n-1} \|_2^2 )     
  + \frac{2 C_0^2 \epsilon^{-2}}{\gamma \nu} \| \bm{\zeta}_1^n \|_2^2      
\\
   & 
   + \Big( \frac{\tilde{C}_3}{2 \gamma} + 2  (C^*)^2 \Big) \epsilon^{-2} 
   ( 3 \| \ephi^n \|_2^2  + \| \ephi^{n-1} \|_2^2 )  
     + 2 C_1^2 \epsilon^{-2} \| \zeta_2^n \|_2^2  
     + \frac{\epsilon^2}{32} \| \nabla_h \Delta_h \ephi^{n-1} \|_2^2   .  
\end{aligned}
  \label{convergence-7-1} 
\end{equation}
Similarly, the term $\langle \mAh\ext{\phi}\nabh  \Delta_h \bbephi ,  \behu \rangle_1$ cancels each other between \eqref{convergence-5} and \eqref{convergence-6-3}, and such a cancellation makes the error estimate go through. Moreover, by the preliminary nonlinear estimate~\eqref{nonlinear est-4}, we arrive at 
\begin{equation}
\begin{aligned} 
  & 
  \frac{1}{2 \tau} ( \| \nabla_h \ephi^{n+1} \|_2^2 - \| \nabla_h \ephi^n \|_2^2 )  
    + \frac{1}{8 \tau} ( \| \nabla_h (\ephi^{n+1} - \ephi^n ) \|_2^2  
    - \| \nabla_h ( \ephi^n -  \ephi^{n-1} ) \|_2^2 )  
\\
  & 
  + \frac{\epsilon^{-2}}{2 \gamma \tau} ( \| \eu^{n+1} \|_2^2 - \| \eu^n \|_2^2 ) 
	+ \frac{\epsilon^{-2}}{8 \gamma} \tau ( \| \nabh e_p^{n+1} \|_2^2 - \| \nabla_h e_p^n \|_2^2 )     
    + \frac{3 \epsilon^2}{32} \| \nabla_h \Delta_h \ephi^{n+1} \|_2^2  
\\
  \le & 
  \tilde{C}_5 \| \nabla_h \ephi^{n+1} \|_2^2 + \tilde{C}_6 \| \nabla_h \ephi^n \|_2^2 
  + \tilde{C}_7 \| \nabla_h \ephi^{n-1} \|_2^2     
   + \frac{\tilde{C}_2 \epsilon^{-2}}{2 \gamma} ( 3 \| \eu^n \|_2^2 + \| \eu^{n-1} \|_2^2 )    
\\
  &  
  + \frac{2 C_0^2 \epsilon^{-2}}{\gamma \nu} \| \bm{\zeta}_1^n \|_2^2      
     + 2 C_1^2 \epsilon^{-2} \| \zeta_2^n \|_2^2  
     + \frac{\epsilon^2}{32} \| \nabla_h \Delta_h \ephi^{n-1} \|_2^2  ,  
\end{aligned}
  \label{convergence-7-2} 
\end{equation}
with $\tilde{C}_5 = ( \frac{4 \gamma C_0^2}{\nu} + \frac12 ) C_\delta \epsilon^{-2}$, 
$\tilde{C}_6 = \tilde{C}_5 + \Big( 6 \theta_0^2 + 3 ( \frac{\tilde{C}_3}{2 \gamma} + 2  (C^*)^2 ) C_0^2 \Big) \epsilon^{-2}$, $\tilde{C}_7 = \Big(2 \theta_0^2 + ( \frac{\tilde{C}_3}{2 \gamma} + 2  (C^*)^2 ) C_0^2 \Big) \epsilon^{-2}$. Notice that the discrete Poincar\'e inequality (stated in Proposition~\ref{prop: Poincare}) has been repeatedly applied. Therefore, with sufficiently small $\tau$ and $h$, an application of discrete Gronwall inequality leads to the desired higher order error estimate 
\begin{equation}
  \| \nabla_h \ephi^{n+1} \|_2 + \| \eu^{n+1} \|_2 + \tau \| \nabla_h e_p^{n+1} \|_2 
  + \Bigl(  \frac{\epsilon^2}{8} \tau   \sum_{k=1}^{n+1} 
    \| \nabla_h \Delta_h \ephi^k \|_2^2 
   \Bigr)^\frac12  \le C ( \tau^3 + h^4 ) , 
	\label{convergence-8}
\end{equation} 
based on the higher order truncation error accuracy, $\| \bm{\zeta}_1^n \|_2$, $\| \zeta_2^n \|_2 \le C (\tau^3 + h^4)$. As a result, a refined error estimate is obtained.  

With the higher order convergence estimate \eqref{convergence-8} in hand, the a-priori assumption in~\eqref{a priori-1} is recovered at the next time step $t^{n+1}$:  
\begin{equation} 
\begin{aligned} 
  & 
  \| \eu^{n+1} \|_2 , \, \| \nabla_h \ephi^{n+1} \|_2 
  \le C (\tau^3 + h^4 ) \le \tau^\frac{11}{4} + h^\frac{15}{4},   
\\
  &  
  \| \nabla_h e_p^{n+1} \|_2 \le C \tau^{-1} ( \tau^3 + h^4 ) \le C ( \tau^2 + h^3) 
  \le \tau^\frac74 + h^\frac{11}{4} , 
\end{aligned} 
	\label{a priori-5}  
\end{equation} 
provided that $\tau$ and $h$ are sufficiently small. As a result, an induction analysis could be effectively applied, and the higher order convergence analysis is finished.

Meanwhile, the following identity is observed:   
\begin{equation} 
  \tilde{\phi}^n = e_\phi^n - ( \tau^2 \Phi_{\tau, 1} + h^2 \Phi_{h,1}) , \quad 
  \tilde{\bu}^n = \eu^n - (\tau^2 \bU_{\tau, 1} + h^2 \bU_{h,1}) , \quad 
  \tilde{p}^n = e_p^n - (\tau^2 P_{\tau, 1} + h^2 P_{h,1}) , 
  \label{convergence-9-1} 
\end{equation} 
which comes from a comparison between the error functions defined in~\eqref{error function-1}-\eqref{error function-2} and \eqref{error function-3}, as well as the expansion~\eqref{construction-1}, \eqref{construction-2} for the constructed profiles. As a result, the original error estimate~\eqref{convergence-0} is a direct consequence of the $O (\tau^3 + h^4)$ estimate~\eqref{convergence-8} for $( e_\phi^n , \eu^n, e_p^n)$, combined with the numerical error expansion~\eqref{convergence-9-1}. This completes the proof of Theorem~\ref{thm: convergence}.   

\begin{rem}
	The proposed numerical system \eqref{CN-NS}-\eqref{CN-incompressible} depends on $\phi^{n+1}$ in a highly nonlinear and singular way, because of the same nature of the Flory-Huggins free energy. The positivity-preserving property, stated in Theorem~\ref{thm: positivity}, only ensures the convexity of the nonlinear approximation in the logarithmic parts. Such a rough knowledge enables one to derive a rough error estimate, by making use of a higher order consistency analysis. However, it is noticed that the accuracy order in \eqref{convergence-rough-11-3} is at least half order lower than the a-priori estimate \eqref{a priori-1}, as well as the lower rate of the $W_h^{1, \infty}$ errors in \eqref{convergence-rough-11-4}-\eqref{convergence-rough-11-5}, which comes from an application of the inverse inequality. Of course, the a-priori assumption could not be recovered by the lower accuracy rate in \eqref{convergence-rough-11-3}.  Instead, the separation properties \eqref{separation-3}, \eqref{separation-4} are used to derive a much sharper estimate~\eqref{nonlinear est-4} for the logarithmic gradient error term, and this sharper inequality leads to a refined error estimate~\eqref{convergence-8}, which keeps the higher accuracy order in the consistency analysis. 
\end{rem}

\begin{rem} 
The error estimate for the pressure variable has not been reported in~\eqref{convergence-0}. In fact, based on the higher order convergence estimate \eqref{convergence-8}, combined with the numerical error expansion~\eqref{convergence-9-1}, we are able to derive an optimal rate error estimate for the pressure variable, in the $\ell^\infty (0, T; H_h^1)$ norm, for any $n \ge 0$:  
\begin{equation} 
\begin{aligned} 
  & 
  \| \nabla_h e_p^n \|_2 \le C \tau^{-1} ( \tau^3 + h^4 ) \le C ( \tau^2 + h^3)  ,  
\\
  & 
  \| \nabla_h ( e_p^n - \tilde{p}^n ) \|_2 
  = \| \nabla_h ( \tau^2 P_{\tau, 1} + h^2 P_{h,1} ) \|_2 \le C ( \tau^2 + h^2 ) ,   \quad 
  \mbox{so that} 
\\
  & 
   \| \nabla_h \tilde{p}^n \|_2  
   \le \| \nabla_h e_p^n \|_2 + \| \nabla_h ( e_p^n - \tilde{p}^n ) \|_2  
   \le C (  \tau^2 + h^2 ) , 
\end{aligned} 
  \label{convergence-p-1} 
\end{equation} 
under the linear refinement requirement $C_1 h \le \tau \le C_2 h$. The $H^1$ bound of the constructed functions, $P_{\tau, 1}$ and $P_{h,1}$, has also been applied in the derivation.    
\end{rem}

\begin{rem} 
The accuracy order and convergence rate of the proposed numerical scheme~\eqref{CN-NS}-\eqref{CN-incompressible} have been demonstrated by a few numerical experiments in the recent work~\cite{chen24a}, which validates the convergence and error estimate of this article. 
In more details, a few analytic functions are chosen to be the exact solutions:  
\begin{equation} 
\begin{aligned}
	&\phi_e (x,y,t) = 0.5\sin(2\pi x)\cos(2\pi y)\cos t + 0.1,  \quad 
	p_e (x,y,t)  = \sin t \sin(2\pi x) ,  \\
	&u_e (x,y,t) = -\cos t \cos(2\pi x)\sin(2\pi y ) , \quad 
	   v_e (x,y,t) = 	\cos t \sin(2\pi x)\cos(2\pi y) . 
\end{aligned} 
\end{equation} 
In turn, two artificial source terms have to be added to the right hand side of the Navier-Stokes equation \eqref{equation-CHNS-1} and the Cahn-Hilliard equation \eqref{equation-CHNS-2}, to make these analytic functions satisfy the PDE system. A sequence of spatial mesh and time step sizes are taken, with $h=2^{-k}$, $k=4$, 5, 6 ,7, 8, 9, and $\tau =h$. The final time is set as $T=1$, and a careful comparison between the exact and the numerical solutions have indicated the full second order convergence rate for the phase variable, velocity and the pressure. More details could be found in~\cite{chen24a}. 
\end{rem}

\begin{rem} 
The linear refinement requirement, $C_1 h \le \tau \le C_2 h$, is imposed in the convergence analysis. In more details, the requirement $\tau \le C_2 h$ is used to balance the inverse inequality, as reported in the rough estimates~\eqref{a priori-2}, \eqref{convergence-rough-11-4}-\eqref{convergence-rough-11-5}, etc. Meanwhile, the requirement $\tau \ge C_1 h$ is needed to recover the a-priori assumption~\eqref{a priori-1}, due to the $\tau^{-1}$ factor for the pressure variable error estimate in~\eqref{a priori-5}. On the other hand, this linear refinement is just a theoretical requirement, and such a constraint is not necessary in the practical numerical implementations. 

In fact, if an even higher order asymptotic expansion is performed in space (corresponding to higher than fourth order accuracy in space), the requirement $\tau \ge C_1 h$ could be reduced to $\tau \ge C_1 h^{\beta_0}$, with $\beta_0 > 1$. Moreover, the value of $\beta_0$ could grow larger, if the spatial asymptotic expansion order becomes larger. Similarly, if a higher than third order asymptotic expansion is performed in time, the requirement $\tau \le C_2 h$ could be reduced to $\tau \le C_2 h^{\alpha_0}$, with $0 < \alpha_0 < 1$, and the value of $\alpha_0$ may become smaller if the temporal asymptotic expansion order becomes larger. Because of these two facts, we conclude that, the convergence analysis would always be valid for any power scaling law between $\tau$ and $h$, as long as $\tau \to 0+$, $h \to 0+$, since an even higher asymptotic expansion would accomplish this analysis. Of course, a theoretical justification of this conclusion will be a tedious process, although the key scientific ideas have been reported in this article. The technical details are left to interested readers.  

Various numerical experiments in an existing work~\cite{chen24a} have also validated this conclusion, in which all the numerical examples, with different ratios of $\tau / h$, have created robust simulation results.   
\end{rem}

\section{Concluding remarks}  \label{sec:conclusion}

In this paper we have presented an optimal rate convergence analysis and error estimate for a second order accurate in time, finite difference numerical scheme for the Cahn-Hilliard-Navier-Stokes system, with logarithmic Flory-Huggins energy potential. A modified Crank-Nicolson approximation is applied to the chemical potential, combined with a nonlinear artificial regularization term. The numerical scheme has been recently proposed, and the positivity-preserving property of the logarithmic arguments, as well as the total energy stability analysis, were justified. In this paper, a second order convergence of the proposed numerical scheme, in both time and space, has been established at a theoretical level. In more details, the $\ell^\infty (0, T; H_h^1) \cap \ell^2 (0, T; H_h^3)$ error estimate for the phase variable and the $\ell^\infty (0, T; \ell^2)$ estimate for the velocity variable, which shares the same regularity as the energy norm, is performed to pass through the nonlinear analysis for the error terms associated with the coupled physical process. In addition, a uniform distance between the numerical solution and the singular limit values has to be derived, so that the nonlinear errors associated with the logarithmic terms could be effectively controlled. To accomplish these goals, a higher order asymptotic expansion of the numerical solution (up to the third order accuracy in time and fourth order in space) has to be performed, so that an application of inverse inequality is able to ensure the phase separation property. A rough error estimate is used to establish the maximum norm bound for the phase variable, so that the phase separation property becomes available for the numerical solution as well. In turn, a more refined nonlinear error bound could be established, which is helpful to the desired convergence result. In the authors' knowledge, this is the first work to establish a second order optimal rate convergence estimate for the Cahn-Hilliard-Navier-Stokes system with a singular energy potential.

	\section*{Acknowledgments} 
This work is supported in part by the grants NSFC 12241101 and NSFC 12071090 (W.~Chen), NSF DMS-2012269,   DMS-2309548 (C.~Wang), NSFC 12271237 and the Havener Fund (X.~Wang). 
C.~Wang also thanks College of Mathematical Sciences, Fudan University,
for the support during his visit.

\appendix

\section{Proof of Proposition~\ref{prop: nonlinear est-1}} 

The following term is denoted for simplicity of presentation: 
\begin{equation} 
  D (\Psi, \psi) = \frac{\ln ( 1+ \Psi) - \ln ( 1+ \psi)}{\Psi - \psi} , \quad \mbox{so that} \, \, \, 
 \ln ( 1+ \Psi) - \ln ( 1+ \psi) = D (\Psi, \psi) \tilde{\psi} .  
 \label{prop 1-1} 
\end{equation} 
The intermediate value theorem indicates that $D (\Psi, \psi) = \frac{1}{1+\xi_1}$, in which $\xi_1$ is between $\psi$ and $\Psi$. Meanwhile, by the separation property~\eqref{separation-2}, we see that $| D (\Psi, \psi) | = \frac{1}{| 1 + \xi_1 |} \le \delta^{-1}$ at a point-wise level. In turn, we arrive at 
\begin{equation} 
  \| D (\Psi, \psi) \|_\infty \le \delta^{-1} ,  \quad \mbox{so that} \, \, \, 
  \|  \ln ( 1+ \Psi) - \ln ( 1+ \psi) \|_2 \le \| D (\Psi, \psi) \|_\infty \cdot \| \tilde{\psi} \|_2 
  \le \delta^{-1} \| \tilde{\psi} \|_2 . 
  \label{prop 1-2}   
\end{equation} 
Using similar arguments, we are able to derive the other inequalities in~\eqref{nonlinear est-0-1}: 
\begin{equation} 
  \|  \ln ( 1- \Psi) - \ln ( 1- \psi) \|_2  \le \delta^{-1} \| \tilde{\psi} \|_2  , \quad 
  \| {\mathcal N} ( \Psi ) - {\mathcal N} ( \psi ) \|_2 \le 2 \delta^{-1} \| \tilde{\psi} \|_2 . 
  \label{prop 1-3}   
\end{equation} 
As a result, \eqref{nonlinear est-0-1} is proved by taking $C_\delta = 2 \delta^{-1}$. 

To obtain a gradient estimate of the nonlinear logarithmic error, we make use of the Taylor expansion for $D (\Psi, \psi)$: 
\begin{equation} 
  D (\Psi, \psi) = \frac{1}{1 + \frac{\Psi + \psi}{2}} + \frac{(\Psi - \psi)^2}{24} \Big( \frac{1}{(1 + \eta_1)^3} + \frac{1}{(1 + \eta_2)^3} \Big) , 
  \label{prop 1-4}   
\end{equation} 
in which $\eta_1$ is between $\Psi$ and $\frac{\Psi + \psi}{2}$, $\eta_2$ is between $\psi$ and $\frac{\Psi + \psi}{2}$, respectively. Meanwhile, because of of the separation property~\eqref{separation-2}, we observe that 
\begin{equation} 
\begin{aligned} 
  & 
  -1 + \delta \le \Psi , \, \psi , \, \frac{\Psi + \psi}{2} , \, \eta_1 , \, \eta_2 \le 1 - \delta ,  
  \quad  \mbox{so that} 
\\
  &
  \Big\| \frac{1}{1 + \frac{\Psi + \psi}{2}} \Big\|_\infty \le \delta^{-1} , \quad 
  \Big\| \frac{1}{(1 + \eta_1)^3} \Big\|_\infty , \, 
  \Big\| \frac{1}{(1 + \eta_2)^3} \Big\|_\infty \le \delta^{-3} , \quad 
  \| D (\Psi, \psi) \|_\infty \le \delta^{-1} + 1 ,  
\end{aligned} 
  \label{prop 1-5}  
\end{equation} 
in which the last inequality comes from the additional condition that $\| \tilde{\psi} \|_\infty = \| \Psi - \psi \|_\infty \le \tau + h$. On the other hand, by taking a finite difference of the expansion of $D (\Psi, \psi)$ in~\eqref{prop 1-4}, a $W_h^{1, \infty}$ bound could be derived: 
\begin{equation} 
\begin{aligned} 
  \| \nabla_h D ( \Psi, \psi) \|_\infty 
  \le & \Big\| \frac{1}{1 + \frac{\Psi + \psi}{2}} \Big\|_\infty^2 
  \cdot \frac12 ( \| \nabla_h \Psi \|_\infty + \| \nabla_h \psi \|_\infty )  
\\
  & 
  + \frac{1}{12} \| \tilde{\psi} \|_\infty \cdot ( \| \nabla_h \Psi \|_\infty + \| \nabla_h \psi \|_\infty )  
  \cdot \Big( \Big\| \frac{1}{(1 + \eta_1)^3} \Big\|_\infty  
  + \Big\| \frac{1}{(1 + \eta_2)^3} \Big\|_\infty \Big) 
\\
  & 
  + \frac{\| \tilde{\psi} \|_\infty^2}{24}  \Big( \Big\| \nabla_h ( \frac{1}{(1 + \eta_1)^3} ) \Big\|_\infty  
  + \Big\| \nabla_h \frac{1}{(1 + \eta_2)^3} ) \Big\|_\infty \Big) 
\\
  \le & 
  \delta^{-2} \cdot \frac12 ( C^* + \tilde{C}_1) 
  + \frac{\tau +h}{12} \cdot ( C^* + \tilde{C}_1)  \cdot 2 \delta^{-3} 
  + \frac{(\tau +h)^2}{24} \cdot \frac{\delta^{-3}}{h} 
\\
  \le & 
   \frac{\delta^{-2}}{2} ( C^* + \tilde{C}_1)  + 1 , 
\end{aligned} 
  \label{prop 1-6}  
\end{equation} 
provided that $\tau$ and $h$ are sufficiently small. Notice that an inverse inequality has been applied at the second step. Therefore, the following gradient estimate becomes available for the logarithmic error term: 
\begin{equation} 
\begin{aligned} 
   & 
    \| \nabla_h ( \ln ( 1 + \Psi) - \ln ( 1 + \psi ) ) \|_2  
    = \| \nabla_h ( D (\Psi, \psi)  \tilde{\psi} ) \|_2 
\\
  \le & 
     \| D (\Psi, \psi)  \|_\infty \cdot \| \nabla_h \tilde{\psi}  \|_2  
     + \| \nabla_h D (\Psi, \psi)  \|_\infty \cdot \| \tilde{\psi}  \|_2 
\\
  \le & 
    (\delta^{-1} + 1) \| \nabla_h \tilde{\psi}  \|_2  
     +  ( \frac{\delta^{-2}}{2} ( C^* + \tilde{C}_1)  + 1 )  \|  \tilde{\psi}  \|_2  . 
\end{aligned} 
  \label{prop 1-7}  
\end{equation} 
The other logarithmic error term could be analyzed in the same manner. Inequality~\eqref{nonlinear est-0-2} is proved, by taking $C_\delta = 2 \max \Big( \delta^{-1} + 1 , \frac{\delta^{-2}}{2} ( C^* + \tilde{C}_1)  + 1 \Big)$. This finishes the proof of Proposition~\ref{prop: nonlinear est-1}.

\section{Proof of Proposition~\ref{prop: nonlinear est-2}} 

The term $F_{1+ \breve{\Phi}^n}(1+ \breve{\Phi}^{n+1}) - F_{1+\phi^n}(1+\phi^{n+1})$ could be decomposed as 
\begin{equation}
	\begin{aligned}
		&F_{1+\breve{\Phi}^n}(1+\breve{\Phi}^{n+1})-F_{1+\phi^n}(1+\phi^{n+1})\\
		& =F_{1+ \breve{\Phi}^{n+1}}(1+ \breve{\Phi}^{n}) - F_{1+ \breve{\Phi}^{n+1}}(1+\phi^n) + F_{1+\phi^n}(1+ \breve{\Phi}^{n+1}) - F_{1+\phi^n}(1+\phi^{n+1})\\
		& = Q ( \breve{\Phi}^{n+1}, \breve{\Phi}^n , \phi^n ) \ephi^n 
		+ Q ( \phi^n , \breve{\Phi}^{n+1} , \phi^{n+1} ) \ephi^{n+1}\\		
	        & =F_{1+\Phi^{n+1}}'(\xi_3)\ephi^n+F_{1+\phi^n}'(\xi_4)\ephi^{n+1} 
		=\frac{\ephi^n}{2 ( 1+ \eta_3)}+\frac{\ephi^{n+1}}{2 (1 + \eta_4)} ,
	\end{aligned} 
	\label{prop 2-1}  
\end{equation}
in which $Q (a, x,y)$ corresponds to the following difference quotient function: 
\begin{equation} 
  F_{1+a} (1+y) - F_{1+a} (1+x) = Q (a, y,x) (y-x) . 
\end{equation} 
Notice that the mean value theorem and Lemma~\ref{estimate of Fa} have been applied in the derivation, $\xi_3$ is between $\phi^n$ and $\breve{\Phi}^n$, $\xi_4$ is between $\phi^{n+1}$ and $\breve{\Phi}^{n+1}$, $\eta_3$ is between $\xi_3$ and $\breve{\Phi}^{n+1}$, $\eta_4$ is between $\xi_4$ and $\phi^{n}$. Because of the phase separation property for $\phi^n$, as well as for $\breve{\Phi}^n$ and $\breve{\Phi}^{n+1}$, we see that 
\begin{equation} 
\begin{aligned} 
  & 
  -1 + \delta \le \eta_3 \le 1 - \delta , \quad \mbox{so that} \, \, \, 
  0 < \frac{1}{2 ( 1+ \eta_3)} \le \frac12 \delta^{-1} , 
\\
  & 
 \langle \ephi^{n+1} , F_{1 + \breve{\Phi}^{n+1}}(1 + \breve{\Phi}^n) - F_{1 + \breve{\Phi}^{n+1}}(1+\phi^n) \rangle_c  
\\
  & 
 =  \langle \ephi^{n+1} , \frac{\ephi^n}{2 ( 1+ \eta_3)}  \rangle_c 
 \ge - \Big\| \frac{1}{2 ( 1+ \eta_3)} \Big\|_\infty \cdot \| \ephi^{n+1} \|_2 \cdot \| \ephi^n \|_2  
 \ge \frac12 \delta^{-1}  \| \ephi^{n+1} \|_2 \cdot \| \ephi^n \|_2 .   
\end{aligned} 
  \label{prop 2-2}  
\end{equation} 
The other nonlinear inner product turns out to be non-negative, due to the positive value of $1 + \eta_4$: 
\begin{equation}  
 \langle \ephi^{n+1} , F_{1 + \phi^n}(1 + \breve{\Phi}^{n+1}) - F_{1 + \phi^n}(1+\phi^{n+1}) \rangle_c   
 =  \langle \ephi^{n+1} , \frac{\ephi^{n+1}}{2 ( 1+ \eta_4)}  \rangle_c 
 \ge 0 .   
  \label{prop 2-3}  
\end{equation} 
Consequently, a combination of~\eqref{prop 2-2} and \eqref{prop 2-3} results in the first inequality of~\eqref{nonlinear est-1-1}, by taking $C_\delta =  \frac12 \delta^{-1}$. The other inequality of~\eqref{nonlinear est-1-1} could be proved in the same manner, and the details are skipped for the sake of brevity. 

In addition, the gradient error estimate could be established in a similar fashion as in the proof of Proposition~\ref{prop: nonlinear est-1}. With the help of the separation property for $\breve{\Phi}^{n+1}$, $\breve{\Phi}^n$, $\phi^{n+1}$ and $\phi^n$, as well the additional conditions~\eqref{nonlinear est-condition-2}, the following $W_h^{1, \infty}$ bounds could be derived for $Q ( \breve{\Phi}^{n+1}, \breve{\Phi}^n , \phi^n )$ and $Q ( \phi^n , \breve{\Phi}^{n+1} , \phi^{n+1} )$:  
\begin{equation} 
\begin{aligned} 
  & 
  \| Q ( \breve{\Phi}^{n+1}, \breve{\Phi}^n , \phi^n ) \|_\infty , \, \,  
  \| Q ( \phi^n , \breve{\Phi}^{n+1} , \phi^{n+1} ) \|_\infty \le \frac12 \delta^{-1} , 
\\
  & 
  \| \nabla_h Q ( \breve{\Phi}^{n+1}, \breve{\Phi}^n , \phi^n ) \|_\infty , \, \,  
  \| \nabla_h Q ( \phi^n , \breve{\Phi}^{n+1} , \phi^{n+1} ) \|_\infty \le M_\delta , \quad 
  \mbox{only dependent on $\delta$ and $C^*$} . 
\end{aligned} 
  \label{prop 2-4}  
\end{equation} 
Again, the technical details are skipped for the sake of brevity. By the decomposition representation~\eqref{prop 2-1}, we see that 
\begin{equation}
	\begin{aligned}
		& 
		\|  \nabla_h ( F_{1+\breve{\Phi}^n}(1+\breve{\Phi}^{n+1}) 
		 - F_{1+\phi^n}(1+\phi^{n+1}) ) \|_2 \\
		\le & 
		  \| \nabla_h ( Q ( \breve{\Phi}^{n+1}, \breve{\Phi}^n , \phi^n ) \ephi^n ) \|_2 
	       + \| \nabla_h ( Q ( \phi^n , \breve{\Phi}^{n+1} , \phi^{n+1} ) \ephi^{n+1} ) \|_2 
		\\ 		
	        \le & 
	        \| \nabla_h Q ( \breve{\Phi}^{n+1}, \breve{\Phi}^n , \phi^n )  \|_\infty 
	        \cdot \| \ephi^n \|_2 
	        + \| Q ( \breve{\Phi}^{n+1}, \breve{\Phi}^n , \phi^n )  \|_\infty 
	        \cdot \| \nabla_h \ephi^n \|_2 
	        \\
	          & 
	          + \| \nabla_h Q ( \phi^n , \breve{\Phi}^{n+1} , \phi^{n+1} ) \|_\infty 
	          \cdot \| \ephi^{n+1}  \|_2 
	          + \| Q ( \phi^n , \breve{\Phi}^{n+1} , \phi^{n+1} ) \|_\infty 
	          \cdot \| \nabla_h \ephi^{n+1}  \|_2 
	        \\
	        \le & 
	        \frac12 \delta^{-1} ( \| \nabla_h \ephi^n \|_2 + \| \nabla_h \ephi^{n+1} \|_2 ) 
	        + M_\delta ( \| \ephi^n \|_2 + \| \ephi^{n+1} \|_2 ) . 
	\end{aligned} 
	\label{prop 2-5}  
\end{equation}
Therefore, the first inequality in \eqref{nonlinear est-1-2} has been proved, by taking $C_\delta = \max( C_0 M_\delta , \frac12 \delta^{-1})$. The other inequality in \eqref{nonlinear est-1-2} could be established in the same style. This finishes the proof of Proposition~\ref{prop: nonlinear est-2}.

	\bibliographystyle{plain}
	\bibliography{refs}

\end{document}